\newcommand{\smallmargin}{0.9in}
\newcommand{\currentmargin}{\smallmargin}
\numberwithin{equation}{section}
\setlist[itemize]{itemsep=3pt, topsep=0pt, parsep=0pt}
\setlist[enumerate]{itemsep=3pt, topsep=0pt, parsep=0pt}
\setlist[itemize]{itemsep=5pt, topsep=5pt, parsep=0pt}
\setlist[enumerate]{itemsep=5pt, topsep=5pt, parsep=0pt}
\theoremstyle{plain} 
\newtheorem{theorem}{Theorem}[section]
\newtheorem{corollary}{Corollary}[section] 
\newtheorem{lemma}{Lemma}[section] 
\newtheorem{proposition}{Proposition}[section] 
\newtheorem*{lemma0}{Lemma}
\theoremstyle{definition} 
\newtheorem{definition}{Definition}[section] 
\theoremstyle{remark} 
\newtheorem{remark}{Remark}
\newcommand{\as}[0]{a.s.}
\newcommand{\wt}[0]{\widetilde}
\newcommand{\sqn}[0]{\sqrt n}
\newcommand{\un}[0]{u_n}
\newcommand{\ucp}[0]{\mathrm{ucp}}
\newcommand{\lra}[0]{ \longrightarrow }
\newcommand{\ra}[0]{ \rightarrow }
\newcommand{\convergence}[1]{ \xlongrightarrow[n\ra \infty]{#1} }
\newcommand{\xucp}[1]{ {#1}\text{-}\ucp}
\newcommand{\cC}[2]{C^{(#1)}_{#2}}
\newcommand{\cB}[2]{B^{(#1)}_{#2}}
\newcommand{\cD}[2]{{\bf Z}^{(#1)}_{#2}}
\newcommand{\Cond}[2]{\wt C^{(#1)}_{#2}}
\newcommand{\CondB}[2]{\wt B^{(#1)}_{#2}}
\newcommand{\CondD}[2]{ {\bf \wt Z}^{(#1)}_{#2}}
\newcommand{\W}[1]{W_{#1}}
\newcommand{\Z}[1]{Z_{#1}}
\newcommand{\Wth}[1]{W^{\theta}_{#1}}
\newcommand{\Xp}[1]{Y'_{#1}}
\DeclareMathOperator{\Esp}{E}
\DeclareMathOperator{\Prob}{P}
\DeclareMathOperator{\Qrob}{Q}
\DeclareMathOperator{\IR}{\mathbb{R}}
\DeclareMathOperator{\IN}{\mathbb{N}}
\DeclareMathOperator{\bF}{\mathcal{F}}
\DeclareMathOperator{\bP}{\mathcal{P}}
\DeclareMathOperator{\erfc}{erfc}
\DeclareMathOperator{\dom}{dom}
\DeclareMathOperator{\D}{D}
\DeclareMathOperator{\Lop}{L}
\DeclareMathOperator*{\sgn}{sgn}
\DeclareMathOperator{\Mills}{Mills}
\DeclareMathOperator*{\MC}{MC}
\DeclareMathOperator*{\xPlim}{\Prob-\lim}
\newcommand{\rd}{\mathrm{d}}
\newcommand{\vd}{\,\mathrm{d}}
\newcommand{\process}[1]{(#1)_{t\ge 0}}
\newcommand{\indic}[1]{\mathbbm{1}_{#1}}
\newcommand{\indicb}[1]{\mathbbm{1}_{\{#1\}}}
\newcommand{\loct}[3]{L^{#2}_{#3}(#1)}
\newcommand{\cloct}[4]{{#1}^{#3}_{#4}(#2)}
\newcommand{\hfprocess}[3]{{#1}^{#2}_{\frac{{#3}}{n}}}
\newcommand{\bigbraces}[1]{ \big( #1 \big) } 
\newcommand{\Bigbraces}[1]{ \Big( #1 \Big) } 
\newcommand{\biggbraces}[1]{ \bigg( #1 \bigg) }
\newcommand{\bigsqbraces}[1]{ \big[ #1 \big] } 
\newcommand{\biggsqbraces}[1]{ \bigg[ #1 \bigg] }
\newcommand{\biggcubraces}[1]{ \bigg\{ #1 \bigg\}}
\newcommand{\bigabsbraces}[1]{ \big| #1 \big|}
\newcommand{\qv}[1]{ \langle #1 \rangle }
\newcommand{\hemail}[1]{\href{mailto:{#1}}{\textit{#1}}}  
\newcommand{\bbreak}[0]{\vspace{1.0em} \noindent}
\title{ On the Number of Crossings and Bouncings\\ of a Diffusion at a Sticky Threshold}
\author[1,2]{Alexis Anagnostakis \thanks{\hemail{alexis.anagnostakis@univ-lorraine.fr}} }
\author[3]{Sara Mazzonetto \thanks{\hemail{sara.mazzonetto@univ-lorraine.fr}} }
\affil[1]{Universit\'e de Lorraine, CNRS, IECL, F-57000 Metz, France}
\affil[2]{Universit\'e Grenoble--Alpes, CNRS, LJK, Inria, F-38000 Grenoble, France}
\affil[3]{Universit\'e de Lorraine, CNRS, IECL, Inria, F-54000 Nancy, France}
\newcommand{\shorttitle}{Crossings and bouncings at a sticky threshold}
\newcommand{\shortauthors}{A. Anagnostakis and S. Mazzonetto}
\date{ }
\begin{document}
	
	\maketitle
	
	\begin{abstract}
In this paper, we study the asymptotic behavior of the number of crossings by a one-dimensional diffusion of a threshold where the process exhibits stickiness. 
We distinguish three types of crossings and show that to each type corresponds a distinct asymptotic regime for the respective number of crossings statistic. 
We introduce notions of bouncing as the symmetric counterparts to crossings and show that the corresponding number of bouncings statistics share the same asymptotic properties as their crossings counterparts. We first prove the results for sticky Brownian motion, then extend them to sticky-reflected Brownian motion (where only bouncing is possible) and to sticky diffusions. As an application, we propose consistent estimators for the stickiness parameter of sticky diffusions and sticky-reflected Brownian motion.
	\end{abstract}

\begin{small}
	\emph{Date (of this version):} \quad \today \\
	\emph{2020 Mathematics Subject Classification.}\quad 60J55, 60F05, 60G17, 60J60. \\
	\emph{Key words and phrases.}\quad Sticky Brownian motion, 
	local time approximation,
	occupation time approximation,
	crossing statistics,
	bouncing statistics,
	slow reflection.
\end{small}
	
	\setlength{\abovedisplayskip}{1ex plus 0.5ex minus 0.5ex}
	\setlength{\belowdisplayskip}{1ex plus 0.5ex minus 0.5ex}
	\setlength{\abovedisplayshortskip}{0ex plus 0.5ex} 
	\setlength{\belowdisplayshortskip}{1ex plus 0.5ex minus 0.5ex}
	
	\renewcommand{\thefootnote}{}
	\newcommand{\footsection}[1]{\footnotetext{ #1 } }
	
	\footsection{ The authors were partially supported by the Programme Exploratoire Pluridisciplinaire (PEPS) of CNRS Mathématiques. }

\section{Introduction} \label{sec_intro}

The topic of threshold crossing by a process has been extensively studied in probabilistic literature. 
When the process is a one--dimensional diffusion that solves a classical stochastic differential equation (SDE) with sufficiently regular coefficients, the number of crossings of a threshold $\zeta \in \mathbb{R}$ provides consistent estimations of the local time at $\zeta $ (e.g.~\cite{Aza89,Jac98,Jac2017}; for a definition of local time see Section~\ref{ssec_local_time}).
Similar results have been developed for diffusions with a skew threshold or with discontinuous coefficients, allowing for estimation of localized features of a diffusion, such as the localized diffusion coefficient~(e.g.,~\cite{Florens1988}),
the skew parameter~\cite{Lejay2019,Maz19} and the oscillation jump of the diffusion coefficient~\cite{Maz19}.
Despite these advances, the limit behavior at a sticky threshold remains not well  understood. 

In this paper, we study the limit behavior of the number of the threshold $0$ by the time-discretization of the sticky Brownian motion, as the discretization step vanishes.
The sticky Brownian motion is the diffusion process that behaves like the standard Brownian motion away from $0$ and spends a positive amount of time at $0$ upon contact (see a definition in Section~\ref{ssec_stickyBM}). The set of times at which the process is at $0$ forms a  totally disconnected random set of positive Lebesgue measure. This contrasts with the standard Brownian motion, whose zeroes form an uncountable, totally disconnected set, but of Lebesgue measure $0$ (see, e.g.,~\cite[Proposition III.3.12]{RevYor}).  

To the best of our knowledge, the only established result about number of crossings at the sticky threshold 0 by a sticky Brownian motion $\process{X_t}$ is due to Gikhman.  
The result states that for every $t\ge 0$ as $n$ goes to infinity, the statistic
\begin{equation}
	\sum_{i=1}^{[nt]} \indicb{\hfprocess{X}{}{i-1}\hfprocess{X}{}{i}<0}
\end{equation}
converges in law to some non-trivial discrete random variable (see \cite[\S 8]{Port94}, stated in the next Theorem~\ref{thm_Portenko}). This contrasts with the standard Brownian motion, where this statistic, renormalized by $\sqrt{\pi/2n}$, converges in probability to its local time at $0$.

In the case of sticky Brownian motion, we distinguish between three types of crossings and establish the limit behavior of each respective re-normalized number of crossings statistic. 
Unlike the Brownian motion, we show that the normalizing sequence depends on the type of crossing considered. 
One type of crossings provides a statistic whose renormalized limit matches the one observed in the Brownian case. 
Understanding the limit behavior of all these statistics is a crucial step before tackling the open question of the convergence rates to the local time at a sticky threshold. 
For non-sticky processes, these have been obtained in~\cite[Theorem~1.2]{Jac98} for classical diffusions and to~\cite{Maz19} for skew threshold
diffusions.

Ultimately, establishing such rates would enable us to obtain similar rates for the stickiness parameter estimators established here and in   \cite{Anagnostakis2022,AnagnostakisM2023}, which are built on such approximations. 
To highlight the importance of the estimation problem, let us mention a few applications of sticky diffusions. 
These processes are used in finance to model price dynamics on asset subject to takeover offers (see~\cite{criens2022separating}). In quantum mechanics, to model motions of particles near a source of emission (see \cite{DavTru}). In classical mechanics, to model motions of coarse particles in colloids (see \cite{BouRabee2020} and references therein). And in epidemics, to model concentrations of pathogens in an organism (see~\cite{calsina2012steady}).
During an infection, these concentrations are random and significant, whereas off-infection, they keep close to zero. 

Another contribution of this paper is the introduction of the symmetric counterpart of crossings, called bouncings, defined here as rebounds of the time-discretization of the process at some threshold. 
As with crossings, we distinguish three kinds of bouncing and prove that the respective number of bouncings has the same asymptotic properties as the number of crossings of the same kind. We combine results on bouncings and crossings to establish results on the sticky-reflected Brownian motion (see Section~\ref{ssec_stickyrefl} for a definition), where only bouncing behavior is possible, and also on some sticky It\^o diffusions. Our findings allow for the construction of a consistent stickiness parameter estimator, which we numerically show to converge at a comparable or faster rate than existing estimators in~\cite{Anagnostakis2022, AnagnostakisM2023}. 

\bbreak
\emph{Outline.}\quad The paper is organized as follows.  
In Section~\ref{sec_preliminaries}, we introduce the sticky Brownian motion along with several notions and results, which are useful for this paper.
In Section~\ref{sec_main_result}, we state the main results of this paper, namely, the limit behavior of the considered
number of crossings and bouncings statistics (Theorems~\ref{thm_main}, \ref{thm_main_R0}) and the consistency of a stickiness parameter estimator based on these (Proposition~\ref{prop_estimation}). 
In Section~\ref{sec_proof_B}, we prove these results.
In Section~\ref{sec_SID}, we extend the results to the sticky-reflected Brownian motion and to smooth Itô diffusions with a sticky threshold. 
Section~\ref{sec_numexp} is dedicated to numerical experiments on the stickiness parameter estimator devised in this paper.

In the Appendix we prove several useful results for our paper.   
More precisely, in Appendix~\ref{app_asymptotics}, we prove asymptotic results on the sticky Brownian motion transition kernel.
In Appendix~\ref{app_reflection_sticky}, we prove a reflection principle at 0 for the sticky Brownian motion.

\section{Preliminary notions and results}
\label{sec_preliminaries}

In this section we state preliminary notions and results useful for the rest of the paper.
We begin by defining the sticky Brownian motion, recall its probability transition kernel and its space-time scaling property.
Then, we recall two notions of convergence. The first is the convergence uniform in time, in probability, in which the
local time approximation results are expressed and which retains the functional character of the approximation.
The second is the conditional convergence in probability, in which the estimation results are expressed.
The estimators are indeed consistent only on the event that the threshold of interest ($0$ in our case) is hit by the process.
Last, we state convergence results, in particular a local time approximation result previously established in~\cite{AnagnostakisM2023}, to which 
the proofs of our main results are reduced to.

\subsection{Local time}
\label{ssec_local_time}

We start by recalling a definition of the local time, which 
plays a fundamental role in our analysis.
Indeed, in many cases, it 
will be the limiting process of our statistics of interest.

The local time of a semi-martingale, $X$,  
is the random field $(\loct{X}{a}{t};\; a\in \IR,\,t\ge 0) $, defined for all 
$t\ge 0 $, $a\in \IR $ as the term $\loct{X}{a}{t} $
so that the following equation holds (see~\cite[Theorem~1.2]{RevYor})
\begin{equation}
	|X_t - a| = |X_0 - a| + \int_{0}^{t} \sgn(X_s - a) \vd X_s + \loct{X}{a}{t}.
\end{equation}
In the above equation we used the convention $\sgn(x)=\indicb{x>0}-\indicb{x\le 0} $. 

For all $a\in \IR $, $t \mapsto \loct{X}{a}{t}$ is increasing and is fully supported on $\{t\ge 0:\; X_t = a\} $. 
This means that 
\begin{equation}
	\label{eq_txt_support_loctime}
	\begin{aligned}
		\forall t &\ge 0:
		& \int_{0}^{t}\indicb{|X_s-a| \not = 0} \vd \loct{X}{a}{s}&= 0.
	\end{aligned}
\end{equation}

The local time field can alternatively be defined as the almost sure limit (see~\cite[Corollary~VI.1.9]{RevYor}): 
\begin{equation}
	\label{eq_txt_loctime_char_2}
	\begin{aligned}
		\forall &t\ge 0,\, a\in \IR,
		& \text{almost surely}:&
		&\loct{X}{a}{t}
		&= \lim_{\epsilon \ra 0} \frac{1}{\epsilon}\int_{0}^{t} \indicb{X_s \in [a,a + \epsilon)} \vd \qv{X}_s.
	\end{aligned}
\end{equation}

\subsection{The sticky Brownian motion}
\label{ssec_stickyBM}

We now provide a definition of sticky Brownian motion as a general linear diffusion on the real line described by scale function and speed measure. We explain what is the role of the stickiness parameter without discussing further what are scale function and speed measure. Instead, we provide other equivalent characterizations of the process which are useful in the proofs of our results.

The sticky Brownian motion of stickiness parameter $\rho>0$ is the diffusion process on $\IR $, on natural scale, with speed measure 
$m(\rd x) = \vd x + \rho \delta_{0}(\rd x) $ (see, e.g., \cite[p.123-124]{BorSal}). 
The parameter $\rho>0 $, called stickiness parameter, expresses the propensity of the process to stick at $0$. The higher it is, the more time the process spends on average at $0$.
The asymptotic case $\rho = 0 $ corresponds to the standard Brownian motion and
$\rho=\infty $ to the Brownian motion with an absorbing boundary at $0$.
In this paper we deal only with the case $\rho \in (0,\infty) $.
The 
Brownian motion case is known and the absorbing case is trivial.

\bbreak
We now provide some useful equivalent characterizations of sticky Brownian motion. 
Let $\mathcal P_x = (\Omega, \bF, \process{\bF_t}, \Prob_x)$ be a filtered probability space. 
The subscript $x $ in $\mathcal P_x $ is a notational choice to indicate the starting point of the process. 
The filtrations $\process{\mathcal F_t}$ as well as all filtrations in this paper are assumed to satisfy the usual conditions (right-continuity and completeness).

The following hold (see e.g.~\cite[Sections~5.1, 5.2]{ItoMcKean96} and \cite[Theorem 47.1 and Remark~(ii), p.277]{RogWilV2}):
\begin{enumerate}
	[label={\upshape (t\arabic*)}]  
	\item \label{item_sticky_P1} Let $X$ be a sticky Brownian motion of stickiness parameter $\rho$, defined on $\mathcal P_x$ such that $\Prob_x$-\as, $X_0=x $. 
	(In particular, $X$ is $\process{\bF_t}$-adapted.)
	There exists a Brownian motion $\Z{}$, defined on an extension of $\mathcal P_x $, such that $X=\process{\Z{\gamma(t)}} $, where the time--change $\gamma$ is the right-inverse of 
	\begin{equation} \label{eq:gamma}
		\begin{aligned}
			A(t) 
			&:= t + \rho \loct{\Z{}}{0}{t}, 
			&  t&\ge 0,
		\end{aligned}
	\end{equation} 
	given by $\gamma(t):=\inf\{s>0 \colon A(s)>t\}$ 
	and where $\loct{\Z{}}{0}{t}$ is the right local time at $0$ of the process $\Z{}$.
	
	\item \label{item_sticky_P1_rev} Let $Z $ be a standard Brownian motion, defined on the probability space $ \mathcal P_x$, such that $\Prob_x $-\as: $Z_0=x $. Then, the process 
	$X := \process{Z_{\gamma(t)}} $ with $\gamma $ defined in~\ref{item_sticky_P1}, is 
	a sticky Brownian motion of stickiness parameter $\rho $, and, $\Prob_x $-\as: $X_0 = x $.
\end{enumerate}	
Also, the following SDE characterization  holds. 
\begin{enumerate}
	[label={\upshape (p\arabic*)}] 
	\item \label{item_sticky_P2} Let $X$ be a sticky Brownian motion of stickiness parameter $\rho$, defined on the probability space $\mathcal P_x 
	$, such that $\Prob_x$-\as, $X_0=x $.
	There exists a Brownian motion $\W{} $ defined on an extension
	of $\mathcal P_x $ such that $(X,W) $ solves
	\begin{equation}
		\label{eq_txt_sticky_pathwise}
		\begin{dcases}
			\vd X_t = \indicb{X_t \not = 0} \vd \W{t}, \quad X_0=x,\\
			\indicb{X_t = 0} \vd t = \rho \vd \loct{X}{0}{t}.	
		\end{dcases}
	\end{equation}
	
	\item \label{item_sticky_P2_rev} The system \eqref{eq_txt_sticky_pathwise} has a jointly unique weak solution \cite[Theorem 1]{EngPes} and it is a sticky Brownian motion of stickiness parameter $\rho $.
\end{enumerate}
A proof of the last two statements for some sticky diffusions can be found respectively in \cite[Proposition~4.2]{Anagnostakis2022} for \ref{item_sticky_P2} and in \cite[Theorem~4.1]{Anagnostakis2022} for \ref{item_sticky_P2_rev}. 
For sticky Brownian motion these proofs are contained in the proof of~\cite[Theorem~1]{EngPes}.

For an historical overview of the sticky Brownian motion, see~\cite{Pesk2014}. 
For results on this process, see, e.g., \cite{Ami,Anagnostakis2022,AnagnostakisM2023,DavTru,Howitt2007,Sal2017}.
For applications, see~\cite{Hol-Cer2020} and references therein.

\begin{remark}[Occupation time and local time]
	The second line of the system~\eqref{eq_txt_sticky_pathwise} states that the occupation time of the process at $0$ is proportional to its local time at $0$. This is related to the fact that the process spends a positive amount of time at the threshold $0$. 
\end{remark}

\subsection{Notions of convergence}

The results on local time approximation are formulated in terms of the types of convergence introduced below. For convenience, let 
be $(A_n)_{n\ge 0} $ a sequence of real-valued processes defined on the probability space 
$(\Omega, \mathcal F, \Prob) $.

\begin{definition} \label{def:ucp}
	We say that $(A^{n})_{n\ge1} $
	\emph{converges locally uniformly in time, in probability} 
	to $A^{0}$ if 
	\begin{equation}
		\forall t\ge 0:\;
		\sup_{s\le t} \bigabsbraces{A^{n}_s - A^{0}_s} \convergence{\Prob} 0.
	\end{equation}
	We denote this convergence with 
	\begin{equation}
		A^{n} \convergence{\xucp{\Prob}} A^{0} .
	\end{equation}
\end{definition}

The following result gives a sufficient condition for ucp convergence to occur.

\begin{lemma}[cf.~\cite{JacPro}, \S 2.2.3] 
	\label{lem_ucp_convergence_condition}
	Assume that $A^n$, for all $n\ge 1 $, and $A^{0}$ have increasing paths and $A^{0}$ is continuous. If there exists $D$ dense in $[0,\infty)$ such that
	$
	A^{n}_{t} \overset{\Prob}{\longrightarrow} A^{0}_t,\; \  \forall t \in D, 
	$
	then 
	$
	A^{n} \convergence{\xucp{\Prob}} A^{0}.
	$
\end{lemma}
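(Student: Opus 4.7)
The plan is to combine monotonicity of the paths with uniform continuity of the limit $A$ on compacts to reduce a supremum over $[0,t]$ to a maximum over a finite grid, which is then handled by the pointwise hypothesis and a union bound. This is the standard Dini-type argument for increasing processes.

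First I fix $t\ge 0$ and $\epsilon>0$. Since $A$ is continuous and increasing, it is uniformly continuous on $[0,t+1]$; exploiting that $D$ is dense in $[0,\infty)$, I choose a finite subdivision $0=t_0<t_1<\cdots<t_K$ with $t_K>t$ and $t_1,\dots,t_K\in D$ such that $A_{t_{k+1}}-A_{t_k}<\epsilon/2$ for every $k=0,\dots,K-1$. (The value $t_0=0$ is handled separately, using either $A^n_0=A_0=0$ if the processes start at $0$, or by picking $t_0\in D$ arbitrarily close to $0$ and using right-continuity of $A$; this is a minor detail.)

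Next I use monotonicity of both $A^n$ and $A$ on each subinterval $[t_k,t_{k+1}]$. For $s\in[t_k,t_{k+1}]$,
\begin{equation}
A^n_s-A_s\le A^n_{t_{k+1}}-A_{t_k}=(A^n_{t_{k+1}}-A_{t_{k+1}})+(A_{t_{k+1}}-A_{t_k}),
\end{equation}
and similarly
\begin{equation}
A^n_s-A_s\ge A^n_{t_k}-A_{t_{k+1}}=(A^n_{t_k}-A_{t_k})-(A_{t_{k+1}}-A_{t_k}).
\end{equation}
Taking the supremum over $s\le t$ and using the size of the grid gives
\begin{equation}
\sup_{s\le t}\bigabsbraces{A^n_s-A_s}\le \max_{0\le k\le K}\bigabsbraces{A^n_{t_k}-A_{t_k}}+\epsilon/2.
\end{equation}

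Finally I conclude by a finite union bound. By hypothesis $A^n_{t_k}\to A_{t_k}$ in probability for each of the finitely many $k$, so
\begin{equation}
\Prob\bigbraces{\max_{0\le k\le K}\bigabsbraces{A^n_{t_k}-A_{t_k}}>\epsilon/2}\le \sum_{k=0}^{K}\Prob\bigbraces{\bigabsbraces{A^n_{t_k}-A_{t_k}}>\epsilon/2}\xrightarrow[n\to\infty]{}0,
\end{equation}
which forces $\Prob(\sup_{s\le t}|A^n_s-A_s|>\epsilon)\to 0$. Since $t$ and $\epsilon$ are arbitrary, the $\ucpB$ convergence follows. The only mildly delicate point is the treatment of the left endpoint $0\notin D$: this is handled by inserting a point of $D$ close to $0$ and invoking continuity of $A$ at $0$, so that the term $A_{t_1}-A_0$ is absorbed into the $\epsilon/2$ budget; no new ideas are required.
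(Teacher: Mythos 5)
Your skeleton --- monotone interpolation between grid points plus a finite union bound --- is exactly the standard argument behind the cited result; note the paper itself does not prove this lemma, it only quotes \cite{JacPro}. But as written there is a genuine gap in the grid-selection step. The limit $A$ is a \emph{random} process, so ``choose a finite subdivision $t_1<\dots<t_K$ in $D$ with $A_{t_{k+1}}-A_{t_k}<\epsilon/2$'' is an $\omega$-dependent choice: the modulus of continuity of the path $s\mapsto A_s(\omega)$ on $[0,t+1]$ is random, and in general no deterministic finite grid makes all increments of $A$ smaller than $\epsilon/2$ almost surely. Your union bound, however, requires the grid points to be deterministic elements of $D$, since the hypothesis only gives $A^n_r\to A_r$ in probability at fixed times $r\in D$ and says nothing about convergence along random times. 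The standard repair is to keep the oscillation term inside the probability: for a deterministic grid $\{t_k\}\subset D$ of mesh $1/m$ covering $[0,t]$, your two monotonicity inequalities give $\sup_{s\le t}|A^n_s-A_s|\le\max_k|A^n_{t_k}-A_{t_k}|+\max_k(A_{t_{k+1}}-A_{t_k})$, hence $\Prob\bigbraces{\sup_{s\le t}|A^n_s-A_s|>\epsilon}\le\sum_k\Prob\bigbraces{|A^n_{t_k}-A_{t_k}|>\epsilon/2}+\Prob\bigbraces{\max_k(A_{t_{k+1}}-A_{t_k})\ge\epsilon/2}$. First let $n\to\infty$ with the grid fixed (the sum vanishes), then let $m\to\infty$: the last probability tends to $0$ because the paths of $A$ are a.s.\ continuous, hence uniformly continuous on $[0,t+1]$, so $\max_k(A_{t_{k+1}}-A_{t_k})\to0$ a.s.\ as the mesh shrinks. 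No new idea is needed, but this interchange of ``fix a deterministic grid'' and ``control the oscillation in probability'' is precisely what your write-up skips.

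A second, smaller point: the left endpoint is not merely cosmetic. If neither $0\in D$ nor $A^n_0=A_0$ is assumed, the statement is false: take $A\equiv0$ and $A^n_s=-\indic{s<1/n}$, which has increasing paths and satisfies $A^n_s\to0$ for every $s>0$, yet $\sup_{s\le t}|A^n_s-A_s|=1$ for all $n$. Inserting a point $t_1\in D$ close to $0$ and invoking continuity of $A$ does not by itself repair this, because it yields no lower bound on $A^n_0-A_0$; you do need the assumption you mention ($A^n_0=A_0=0$, or $0\in D$), which is harmless here since all the statistics and the local time in the paper vanish at time $0$.
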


For the estimation result we adopt
the notion of \emph{conditional convergence in probability}. 
The reason is that the estimators we consider are consistent conditionally on 
the event that the threshold of interest (assumed located at $0$) is reached. 
For an event $\mathcal H $ with $\Prob(\mathcal H)\neq 0$, let $\Prob^{\mathcal H}(\cdot) := \Prob(\cdot| \mathcal H) $ be the conditional probability on $\mathcal H $.

\begin{definition}
	\label{def_conditional_convergence_in_probability}
	We say that the sequence of random variables $(X_{n})_{n\ge 1} $ converges to the random variable $X_{0}$ in probability, conditionally on the event $\mathcal H $ with positive probability, 
	if $X_{n}\lra X_{0} $ in $\Prob^{\mathcal H}$-probability.
	We denote this convergence with
	\begin{equation}
		X_{n} \convergence{\Prob^{\mathcal H}} X_{0}.
	\end{equation}
\end{definition}

\subsection{Existing useful asymptotic results}

For establishing the limit behavior of the number of crossings statistics we use the following existing results 
on the sticky Brownian motion: an approximation of the local time, an approximation of the occupation time at $0$, and a result on discrete martingales. 
We conclude the section by recalling a previously established limit in law of the number of crossings (of a precise type, type $0$, described in Section~\ref{sec_main_result}), that we do not use in the paper because we do not find the proof.

\begin{proposition}[\cite{AnagnostakisM2023}~Proposition~6.3]
	\label{prop_SMA_1}
	Let $X$ be the sticky Brownian motion of stickiness parameter $\rho>0 $, defined on the filtered probability space
	$(\Omega, \bF, \process{\bF_t},\Prob_x) $ such that $\Prob_x $-\as, $X_0=x $.
	Let $m_{\sqn \rho} $ be the measure defined for all $x\in \IR $ by
	\begin{equation} \label{def_mn}
		m_{\sqn \rho}(\rd x) = \vd x + \sqn \rho \, \delta_{0}(\rd x)
	\end{equation}
	and $(g_{n})_{n}$ a sequence of measurable real functions such that for all $x\in \IR$ 
	\begin{multline}
		\label{eq_connd_gn_aggreg}
		\lim_{n\lra \infty} \Bigbraces{ \frac{g_{n}^2(\sqn x)}{n} + 
			\frac{m_{\sqn \rho}(g^{2}_n)}{\sqn}  \\
			+  \frac{ \bigbraces{\int_{-\infty}^{+\infty} |x| g_n(x) \vd x} \bigbraces{1 + \log(n)} g_n(\sqn x)}{n}
			+   \frac{{  \bigbraces{1 + \log(n)} m_{\sqn \rho}(|g_n|)}}{\sqn}}
		= 0
	\end{multline}
	and $\lim_{n\to \infty}m_{\sqn \rho}(g_n) = M$.
	Then, for all $t \geq 0$,
	\begin{equation}\label{eq_the_case_a_05_plain}
		\frac{1}{\sqn} \sum_{i=1}^{[nt]}g_n(\sqn \hfprocess{X}{}{i-1}) \convergence{\Prob_x} M \loct{X}{0}{t}.
	\end{equation}
	Also, if~\eqref{eq_connd_gn_aggreg} holds and $\sup_n \bigbraces{m_{\sqn \rho}(|g_n|)} <\infty$ then the above convergence is localy uniform in time, in $\Prob_x$-probability.
\end{proposition}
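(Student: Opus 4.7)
The plan is to compare the discrete sum with a continuous--time functional of $X$ and then exploit the occupation time formula for the sticky Brownian motion. Combining the two lines of \eqref{eq_txt_sticky_pathwise} yields $\vd\qv{X}_s = \indic{X_s \not= 0}\vd s$ and $\indic{X_s = 0}\vd s = \rho\,\vd\loct{X}{0}{s}$, so that for every bounded measurable $h$ the classical occupation time formula gives
\begin{equation}
\int_0^t h(X_s)\vd s = \int_{\IR} h(a)\loct{X}{a}{t}\vd a + \rho\, h(0)\,\loct{X}{0}{t}.
\end{equation}
Applying this identity to $h(\cdot) = g_n(\sqn\,\cdot)$ and performing the change of variables $b = \sqn a$, one obtains a continuous--time proxy
\begin{equation}
A_n(t) := \sqn\int_0^t g_n(\sqn X_s)\vd s = \int_{\IR} g_n(b)\,\loct{X}{b/\sqn}{t}\vd b + \sqn\rho\, g_n(0)\,\loct{X}{0}{t}.
\end{equation}
Writing $A_n(t) = m_{\sqn\rho}(g_n)\,\loct{X}{0}{t} + \int_{\IR} g_n(b)(\loct{X}{b/\sqn}{t} - \loct{X}{0}{t})\vd b$, the first term tends to $M\loct{X}{0}{t}$ by hypothesis, while the remainder vanishes thanks to the a.s.~joint continuity of the local time field of $X$ in the spatial variable. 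The logarithmic factor appearing in the condition $(1+\log n)\,m_{\sqn\rho}(|g_n|)/\sqn \to 0$ of \eqref{eq_connd_gn_aggreg} is precisely what is needed to absorb the Brownian--type modulus estimate of $a\mapsto \loct{X}{a}{t}$ near $0$.

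The core of the proof is then to show that the Riemann--sum error
\begin{equation}
\frac{1}{\sqn}\sum_{i=1}^{[nt]}g_n(\sqn\hfprocess{X}{}{i-1}) - A_n(t) = \sqn\sum_{i=1}^{[nt]}\int_{(i-1)/n}^{i/n}\Bigbraces{g_n(\sqn\hfprocess{X}{}{i-1}) - g_n(\sqn X_s)}\vd s
\end{equation}
tends to $0$ in $\Prob_x$--probability. Since $g_n$ is only measurable, no pathwise modulus--of--continuity argument applies, and the natural route is an $L^2$ estimate obtained via the Markov property at time $(i-1)/n$ combined with the fine asymptotics of the sticky BM transition kernel (such as those collected in Appendix~\ref{app_asymptotics}). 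This is the main obstacle: because of the atom $\sqn\rho\delta_0$ of $m_{\sqn\rho}$, the transition density of $X$ carries a singular contribution at $0$ which must be carefully disentangled from the Gaussian part; carrying the $L^2$ computation through, one recovers \emph{exactly} the four terms aggregated in \eqref{eq_connd_gn_aggreg} as upper bounds --- the diagonal part produces the contributions $g_n^2(\sqn x)/n$ and $m_{\sqn\rho}(g_n^2)/\sqn$, while the off--diagonal covariances, once integrated against the sticky kernel on subintervals of $[0,t]$, produce the two logarithmic quantities.

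Finally, for the locally--uniform--in--time statement under $\sup_n m_{\sqn\rho}(|g_n|)<\infty$, I would decompose $g_n = g_n^+ - g_n^-$ so that the partial sums $\frac{1}{\sqn}\sum_{i=1}^{[nt]}g_n^{\pm}(\sqn\hfprocess{X}{}{i-1})$ and the candidate limits $m_{\sqn\rho}(g_n^{\pm})\,\loct{X}{0}{t}$ are non-decreasing in $t$, extract a subsequence along which $m_{\sqn\rho}(g_n^{\pm})$ converges (which the boundedness assumption guarantees), apply the pointwise convergence just established to each sign, and invoke Lemma~\ref{lem_ucp_convergence_condition} to upgrade the convergence to \ucpB{} along each subsequence. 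Uniqueness of the limit $M\loct{X}{0}{t}$ removes the subsequence extraction.
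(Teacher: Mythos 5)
This proposition is not proved in the paper at all: it is imported verbatim from \cite{AnagnostakisM2023} (Proposition~6.2 there), so there is no in-paper argument to compare yours against. Judged on its own terms, your proposal is a reasonable plan of attack but not a proof: the two steps that carry all of the content of the statement are asserted rather than carried out. First, the passage from $\int_{\IR} g_n(b)\,\loct{X}{b/\sqn}{t}\vd b$ to $\bigbraces{\int_{\IR} g_n(b)\vd b}\loct{X}{0}{t}$ is claimed to follow from the spatial continuity of the local time field, with the logarithmic hypothesis in~\eqref{eq_connd_gn_aggreg} being ``precisely what is needed''. This needs to be checked and does not obviously go through: the modulus $|\loct{X}{a}{t}-\loct{X}{0}{t}|\lesssim\sqrt{|a|\log(1/|a|)}$ leads to a bound of the form $n^{-1/4}\sqrt{\log n}\int|g_n(b)|\sqrt{|b|}\vd b$, whereas the hypotheses only control $\int|x|g_n(x)\vd x$ (without absolute value on $g_n$, and only in combination with $(1+\log n)g_n(\sqn x)/n$) and $(1+\log n)m_{\sqn\rho}(|g_n|)/\sqn$; matching these is a genuine computation, not a remark, and for signed $g_n$ it is unclear it can be done at all from the stated conditions.

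Second, and more seriously, the Riemann-sum error between the discrete statistic and the continuous-time proxy $A_n(t)$ is exactly where the difficulty of the proposition lives, and your treatment of it amounts to announcing that ``carrying the $L^2$ computation through, one recovers exactly the four terms'' of~\eqref{eq_connd_gn_aggreg}. That computation --- conditioning at times $(i-1)/n$, separating the Gaussian part of the sticky transition kernel from the atom at $0$, and summing the off-diagonal covariances over pairs $i<j$ to produce the logarithmic terms --- is the proof; without it the argument is circular, since the hypotheses were evidently reverse-engineered from precisely that estimate. The final \ucpB{} upgrade via positive and negative parts, monotonicity and Lemma~\ref{lem_ucp_convergence_condition} is fine, provided $g_n^{\pm}$ each satisfy the hypotheses you need for the pointwise step (again an issue if that step secretly requires sign information on $g_n$). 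As it stands, the proposal identifies the right skeleton but leaves the load-bearing estimates unproved.
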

In the above proposition, as well as in the entire paper, we use the notation 
\begin{equation} \label{notaz_m}
	m_{\sqn \rho}(g) =  \int_{\mathbb{R}} g(x)  m_{\sqn \rho} (\rd x) = \int_{-\infty}^{+\infty} g(x) \vd x + \sqrt{n}\rho \, g(0)
\end{equation}
for any integrable function $g$.

\begin{lemma}[\cite{AnagnostakisM2023}, Lemma~3.7]
	\label{thm_Altmeyer}
	Let $X$ be a sticky Brownian motion of stickiness parameter $\rho>0$, defined on the filtered probability space
	$(\Omega,\bF,\process{\bF_t},\Prob_x) $ such that $\Prob_x $-\as, $X_0=x $.
	Then, it holds that, for all intervals $U$ (including singletons),
	\begin{equation}
		\label{eq_lem_SOSBM_occtime_approximation}
		\frac{1}{n} \sum_{i=1}^{[n\cdot]} \indicb{\hfprocess{X}{}{i-1} \in U}
		\xrightarrow[n\lra \infty]{\Prob_x\text{-}\ucp} \int_{0}^{\cdot} \indicb{X_s \in U} \vd s. 
	\end{equation}
	In particular, taking $U=\{0\} $, it yields that
	\begin{equation}\label{eq_proof_occtime_statistic_convergence}
		\frac{1}{n} \sum_{i=1}^{[n\cdot]} \indicb{\hfprocess{X}{}{i-1} = 0} \xrightarrow[n\lra \infty]{\Prob_x\text{-}\ucp}
		\int_{0}^{\cdot} \indicb{X_s = 0} \vd s =  \rho \loct{X}{0}{}.
	\end{equation}
\end{lemma}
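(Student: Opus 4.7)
The plan is to invoke Lemma~\ref{lem_ucp_convergence_condition}: both the statistic $A^n_t := \frac{1}{n}\sum_{i=1}^{[nt]}\indic{\hfprocess{X}{}{i-1}\in U}$ and the candidate limit $A_t := \int_0^t\indic{X_s\in U}\vd s$ are non-decreasing in $t$, and $A$ is $1$-Lipschitz hence continuous. It therefore suffices to prove $A^n_t \to A_t$ in $\Prob_x$-probability at each $t$ in a dense subset of $[0,\infty)$.

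For intervals $U$ whose closure does not contain $0$, I would argue by almost sure Riemann integrability: away from the sticky point, $X$ is locally a time-changed Brownian motion and spends zero Lebesgue time at any single level, so the set $\{s\le t : X_s \in \partial U\}$ is \as{} Lebesgue-null, $s \mapsto \indic{X_s\in U}$ is \as{} Riemann integrable on $[0,t]$, and the left Riemann sums $A^n_t$ converge almost surely to $A_t$.

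The critical case is $U = \{0\}$, in which the second equality in~\eqref{eq_proof_occtime_statistic_convergence} is just the second line of~\eqref{eq_txt_sticky_pathwise} integrated in $t$. For the convergence itself I would use an $L^2$ estimate. Writing
\begin{equation*}
A^n_t - \int_0^{[nt]/n}\indic{X_s=0}\vd s = \sum_{i=1}^{[nt]} D_i^{(n)}, \qquad D_i^{(n)} := \int_{(i-1)/n}^{i/n}\bigbraces{\indic{\hfprocess{X}{}{i-1}=0} - \indic{X_s = 0}}\vd s,
\end{equation*}
with $|D_i^{(n)}|\le 1/n$, I would expand the square and bound $\sum_{i,j}\Esp_x[D_i^{(n)} D_j^{(n)}]$. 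The diagonal contribution is at most $t/n$, and for off-diagonal cross terms I would use the Markov property at the later grid time together with the asymptotic estimates on $y \mapsto \Prob_y(X_s = 0)$ supplied by Appendix~\ref{app_asymptotics} to show that the remaining contribution vanishes. Chebyshev's inequality then yields $A^n_t \to A_t$ in $\Prob_x$-probability (the residual $\int_{[nt]/n}^t\indic{X_s=0}\vd s$ is $O(1/n)$).

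A general interval $U$ containing $0$ is reduced to the two preceding cases via $\indic{X_s \in U} = \indic{X_s \in U\setminus\{0\}} + \indic{X_s = 0}$, with $U\setminus\{0\}$ approximated from inside by closed sub-intervals at positive distance from $0$; the approximation error is controlled by the singleton case applied to the small symmetric windows $(-\epsilon,\epsilon)$ together with the occupation time formula. The main obstacle is the $L^2$ bound for $U=\{0\}$: the zero set $\{s: X_s=0\}$ is closed, has positive Lebesgue measure, yet has empty topological interior, so $\indic{X_\cdot = 0}$ is discontinuous on a set of positive measure and no direct Riemann-sum argument applies. Overcoming this is precisely where the fine transition-kernel asymptotics of Appendix~\ref{app_asymptotics} must be used.
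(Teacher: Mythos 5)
You should first note that the paper never proves this lemma: it is imported verbatim from \cite{AnagnostakisM2023} (Lemma~2.9), so there is no internal proof to compare against, and I can only judge your argument on its own terms and against the toolbox this paper does provide. Your overall architecture is right: the reduction to pointwise convergence via Lemma~\ref{lem_ucp_convergence_condition} is exactly how the paper handles ucp elsewhere; the almost-sure Riemann-sum argument is valid whenever $0\notin\partial U$ (the discontinuity set of $s\mapsto\indic{X_s\in U}$ is contained in $\{s:X_s\in\partial U\}$, which is a.s. Lebesgue-null for levels away from the sticky point); and the $\epsilon$-window reduction of a general interval to the singleton case, using $\int_0^t\indic{0<|X_s|<\epsilon}\vd s\to0$, is sound. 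The identity $\int_0^\cdot\indic{X_s=0}\vd s=\rho\loct{X}{0}{}$ is indeed just the second line of~\eqref{eq_txt_sticky_pathwise}.

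The one real gap is the step you yourself flag as critical: for $U=\{0\}$ the off-diagonal bound $\sum_{i<j}\Esp_x\bigsqbraces{D^{(n)}_iD^{(n)}_j}\to0$ is asserted, not proved, and it is not automatic, since $\Esp_x\bigsqbraces{D^{(n)}_j\mid\bF_{(j-1)/n}}$ is \emph{not} uniformly $o(1/n)$: when $X_{(j-1)/n}$ is near but not at $0$, $\Prob_{X_{(j-1)/n}}(X_u=0)$ is of order one. The estimate does go through, but it needs uniform (non-asymptotic) kernel bounds rather than the limits stated in Lemma~\ref{lem_transition_asymptotics}, which concern the specific rescaled sequences $f_n,k_n,h_n$: one needs $1-\Prob_0(X_u=0)=\Ord(\sqrt u)$, $\int_{\IR}\Prob_y(X_u=0)\vd y=\Ord(\sqrt u)$, and a density bound $p_\rho(s,x,y)\le C/\sqrt s$, all of which follow from Lemma~\ref{lemma_sticky_kernel} by the same Mills-ratio manipulations as in Appendix~\ref{app_asymptotics} but are not literally stated there. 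With these, conditioning at time $(j-1)/n$ gives $|\Esp_x[D^{(n)}_iD^{(n)}_j]|\le C\,n^{-5/2}\bigbraces{1+\sqrt{n/(j-1)}}$, so the double sum is $\Ord(n^{-1/2})$ and your Chebyshev argument closes; but this computation must be written out for the proof to stand.

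Finally, within this paper's own framework there is a much lighter route for the singleton case: apply Proposition~\ref{prop_SMA_1} with $g_n:=n^{-1/2}\indic{\{0\}}$. Then $\frac{1}{\sqn}\sum_{i}g_n(\sqn X_{(i-1)/n})=\frac1n\sum_i\indic{X_{(i-1)/n}=0}$, condition~\eqref{eq_connd_gn_aggreg} is trivially satisfied, $m_{\sqn\rho}(g_n)=\rho$ for every $n$, and $\sup_n m_{\sqn\rho}(|g_n|)=\rho<\infty$, so the ucp convergence to $\rho\loct{X}{0}{}$ is immediate. Combined with your Riemann-sum and $\epsilon$-window steps, this yields the full lemma with no second-moment computation at all.
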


See also \cite{Altmeyer2023}, for the approximation of smooth
occupation time functionals.

\begin{lemma}[\cite{Jacod1993}, Lemma~9]
	\label{lem_Genon_Catalot}
	Let $\mathcal P = (\Omega, \bF, \Prob) $ be a probability space, $((\bF^{n}_{i})_{i})_n $ a family of discrete filtrations on $\mathcal P $, $(\chi^{n}_i)_{i,n}$ a family of random variables
	on $\mathcal P $ such that for all $(i,n) $, $\chi^{n}_i$ being $\bF^{n}_{i}$-measurable and $U$ a random variables on $\mathcal P $. The following two conditions imply $\sum_{i=1}^{n} \chi^{n}_{i}\convergence{\Prob} U$:
	\begin{enumerate}
		[label={ (\roman*)}]
		\item $\sum_{i=1}^{n} \Esp \left( \chi^{n}_{i} \big| \bF^{n}_{i-1} \right) \convergence{\Prob} U$,
		\item $\sum_{i=1}^{n} \Esp \left( \left(\chi^{n}_{i}\right)^{2} \big| \bF^{n}_{i-1} \right) 
		\convergence{\Prob} 0$.
	\end{enumerate}
\end{lemma}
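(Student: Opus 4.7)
My plan is to use the standard triangular-array martingale decomposition. Setting $\xi_i^n := \chi_i^n - \Esp(\chi_i^n \mid \bF_{i-1}^n)$, each $\xi_i^n$ is $\bF_i^n$-measurable with $\Esp(\xi_i^n \mid \bF_{i-1}^n) = 0$, so $M_k^n := \sum_{i=1}^k \xi_i^n$ is a $(\bF_k^n)_k$-martingale with $M_0^n = 0$. The identity
\[
\sum_{i=1}^n \chi_i^n \;=\; \sum_{i=1}^n \Esp(\chi_i^n \mid \bF_{i-1}^n) \;+\; M_n^n
\]
reduces the problem, thanks to hypothesis (i), to proving that $M_n^n \convergence{\Prob} 0$.

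For the martingale piece I would work with its predictable quadratic variation. From the identity $\Esp((\xi_i^n)^2 \mid \bF_{i-1}^n) = \Esp((\chi_i^n)^2 \mid \bF_{i-1}^n) - (\Esp(\chi_i^n \mid \bF_{i-1}^n))^2$, one obtains
\[
\qv{M^n}_n \;=\; \sum_{i=1}^n \Esp((\xi_i^n)^2 \mid \bF_{i-1}^n) \;\le\; \sum_{i=1}^n \Esp((\chi_i^n)^2 \mid \bF_{i-1}^n),
\]
so hypothesis (ii) gives $\qv{M^n}_n \convergence{\Prob} 0$. To upgrade this to convergence of $M_n^n$, I would fix $\delta > 0$ and introduce the $(\bF_k^n)_k$-stopping time $\tau := \inf\{k \ge 0 : \qv{M^n}_{k+1} > \delta\}$ (a stopping time because $\qv{M^n}_{k+1}$ is $\bF_k^n$-measurable by predictability). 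By construction $\qv{M^n}_{n \wedge \tau} \le \delta$ identically, so applying optional stopping to the Doob martingale $(M_k^n)^2 - \qv{M^n}_k$ at the bounded stopping time $n \wedge \tau$ yields $\Esp[(M_{n \wedge \tau}^n)^2] = \Esp[\qv{M^n}_{n \wedge \tau}] \le \delta$. A union bound and Markov's inequality then produce
\[
\Prob(|M_n^n| > \epsilon) \;\le\; \Prob(M_n^n \ne M_{n\wedge\tau}^n) + \Prob(|M_{n\wedge\tau}^n| > \epsilon) \;\le\; \Prob(\qv{M^n}_n > \delta) + \frac{\delta}{\epsilon^2},
\]
and letting first $n \to \infty$, then $\delta \to 0$, completes the proof.

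The main (and essentially only) obstacle is the last step. Since hypothesis (ii) provides only convergence in probability, not in $L^1$, a direct $L^2$ identity $\Esp[(M_n^n)^2] = \Esp[\qv{M^n}_n]$ cannot be used; the localization through the predictable stopping time $\tau$, which is nothing but Lenglart's domination inequality in disguise, is precisely what bridges the gap. Everything else is bookkeeping.
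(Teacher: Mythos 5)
This lemma is not proved in the paper at all: it is quoted verbatim from Genon-Catalot and Jacod \cite{Jacod1993}, Lemma~9, so there is no internal argument to compare yours against. Your proof is correct and is, in substance, the standard proof behind the cited result: split $\sum_i \chi^n_i$ into the predictable compensator, which converges to $U$ by (i), plus the martingale term $M^n_n$; dominate its predictable bracket by $\sum_i \Esp\bigl((\chi^n_i)^2 \mid \bF^n_{i-1}\bigr)$, which tends to $0$ in probability by (ii); and kill $M^n_n$ via the localization at the predictable time $\tau$, which is exactly Lenglart's domination inequality. The only wording I would tighten is the appeal to ``optional stopping of the Doob martingale $(M^n_k)^2-\qv{M^n}_k$'': since no square-integrability of the $\chi^n_i$ is assumed, this process need not be a genuine martingale before stopping. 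The fix is already contained in your construction: on $\{k\le\tau\}$ (a set in $\bF^n_{k-1}$) one has $\Esp\bigl((\xi^n_k)^2\mid\bF^n_{k-1}\bigr)\le\qv{M^n}_k\le\delta$, so the stopped increments $\xi^n_k\indic{k\le\tau}$ are square-integrable and the identity $\Esp\bigl[(M^n_{n\wedge\tau})^2\bigr]=\Esp\bigl[\qv{M^n}_{n\wedge\tau}\bigr]\le\delta$ can be checked directly (the cross terms vanish by conditioning on $\bF^n_{k-1}$). With that adjustment the argument is complete, and the remaining steps (the inclusion $\{M^n_n\ne M^n_{n\wedge\tau}\}\subseteq\{\qv{M^n}_n>\delta\}$, Chebyshev, then $n\to\infty$ followed by $\delta\to0$) are all valid.
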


\begin{theorem}
	[Efimenko-Portenko (1989), see~\cite{Port94}]
	\label{thm_Portenko}
	We consider the setting of Lemma~\ref{thm_Altmeyer}. 
	For all $t>0 $, 
	$\sum_{i=1}^{[nt]} \indicb{\hfprocess{X}{}{i-1}\hfprocess{X}{}{i}<0}$
	converges in law to some discrete random variable $Z_t $
	with values in $\IN_0 $, i.e., there exists $(b_k(t);\; k\in \IN_0,\, t\ge 0) $ such that $\Prob_0 \left( Z_t = k \right) = b_k(t) $, for all $k\in \IN_0 $.
	The probabilities $(b_k(t);\; k\in \IN_0,\, t\ge 0) $ are defined via their Laplace transforms 
	\begin{equation}
		\begin{aligned}
			\int_{0}^{\infty} \lambda e^{-\lambda t} b_k(t) \vd t &= 
			\frac{A_{\lambda}}{1/\rho^{2} + A_{\lambda}}
			\left( \frac{1/\rho^{2}}{1/\rho^{2}+A_{\lambda}} \right)^{k},
			& \forall &k\in \IN_0,
			& \forall & t\ge 0,  
		\end{aligned}
	\end{equation}
	with $A_{\lambda}= \sqrt{\lambda} \left( \sqrt \lambda + \sqrt{2}/\rho \right) $. 
\end{theorem}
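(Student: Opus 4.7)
The plan is to prove convergence in distribution by establishing the limit of the Laplace-in-$t$ transform of the probability generating function. Fixing $\lambda>0$ and $z \in [0,1]$, summing the stated family of identities with weight $z^k$ shows that the theorem is equivalent to
\[
\Psi_n(z,\lambda) \;:=\; \int_0^\infty \lambda e^{-\lambda t}\,\Esp_0\bigsqbraces{z^{S_n(t)}}\vd t \;\xrightarrow[n\to\infty]{}\; \frac{A_\lambda}{A_\lambda + (1-z)/\rho^2},
\]
where $S_n(t) := \sum_{i=1}^{[nt]}\indic{\hfprocess{X}{}{i-1}\hfprocess{X}{}{i}<0}$. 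Combined with tightness of $(S_n(t))_n$ at each fixed $t$ and continuity of the Laplace inversion, this yields the stated weak convergence.

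The main idea is a renewal decomposition of $S_n$ at the on-grid returns of $X$ to $0$. Set $\sigma_0^n := 0$ and $\sigma_{k+1}^n := \inf\bigcubraces{j/n : j/n > \sigma_k^n,\; X_{j/n}=0}$. Each $\sigma_k^n$ is a stopping time taking values in $(1/n)\IN$, so the grid is preserved after shifting; by the strong Markov property of $X$ at $\sigma_k^n$ together with $\Prob_0$-a.s.\ $X_{\sigma_k^n}=0$, the pairs $\braces{\sigma_{k+1}^n - \sigma_k^n,\, S_n(\sigma_{k+1}^n) - S_n(\sigma_k^n)}_{k\ge 0}$ are i.i.d.\ under $\Prob_0$. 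Decomposing the defining integral of $\Psi_n$ block by block then yields the geometric-sum identity
\[
\Psi_n(z,\lambda) \;=\; \frac{\beta_n(z,\lambda)}{1 - \alpha_n(z,\lambda)},
\]
with $\alpha_n(z,\lambda) := \Esp_0\bigsqbraces{e^{-\lambda\sigma_1^n} z^{S_n(\sigma_1^n)}}$ and $\beta_n(z,\lambda) := \Esp_0\bigsqbraces{\lambda\int_0^{\sigma_1^n} e^{-\lambda t}z^{S_n(t)}\vd t}$, reducing the problem to the large-$n$ asymptotics of these single-block quantities.

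To expand $\alpha_n$ and $\beta_n$, I would use the time-change representation~\ref{item_sticky_P1}, $X = Z\circ\gamma$ with $\gamma$ the right-inverse of $A(s) = s + \rho\loct{Z}{0}{s}$. In these terms, $\sigma_1^n$ is essentially the first grid time after an excursion cluster of $Z$ has advanced $A$ by at least $1/n$ beyond an initial zero, and $S_n(\sigma_1^n)$ counts the sign alternations of $Z$ along the grid points falling inside that cluster. The relevant asymptotic inputs are the short-time behavior of the sticky transition kernel (Appendix~\ref{app_asymptotics}), the Laplace transform $\Esp_0\sqbraces{e^{-\lambda L^{-1}(u)}}=e^{-u\sqrt{2\lambda}}$ of the inverse Brownian local time, and It\^o's excursion measure of $Z$. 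In the expansion, $A_\lambda = \lambda + \sqrt{2\lambda}/\rho$ naturally appears as the Laplace exponent of the sticky clock $A$, while $\rho^{-2}$ arises as the asymptotic density of strict sign changes recorded per unit of Brownian local time of $Z$.

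The main obstacle will be this final asymptotic analysis. Blocks with $S_n(\sigma_1^n) = 0$, $1$, or $\ge 2$ strict sign changes scale differently in $n$, and one must carefully verify that only the terms at the right scale survive and sum up to the stated rational function in $A_\lambda$ and $\rho$. Tightness, needed to close the Laplace-inversion argument, follows from a first-moment bound on $S_n(t)$: by the renewal structure, $\Esp_0\sqbraces{S_n(t)}$ is of order $t \cdot \Prob_0\braces{S_n(\sigma_1^n)\ge 1}/\Esp_0\sqbraces{\sigma_1^n}$, which remains bounded as $n \to \infty$ once the per-block sign-change probability is shown to scale commensurately with $\Esp_0\sqbraces{\sigma_1^n}$.
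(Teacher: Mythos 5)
There is nothing in the paper to compare against: Theorem~\ref{thm_Portenko} is quoted from Efimenko--Portenko via~\cite{Port94}, and the authors explicitly say they do not use it because they could not locate its proof. So your proposal has to stand on its own, and as written it is an outline rather than a proof. The skeleton is sound: summing the stated Laplace identities against $z^k$ does give the target $A_\lambda/(A_\lambda+(1-z)/\rho^{2})$; the on-grid zero times $\sigma_k^n$ are grid-valued stopping times, the blocks are i.i.d.\ under $\Prob_0$ by the strong Markov property, and the geometric identity $\Psi_n=\beta_n/(1-\alpha_n)$ is correct. Moreover the plan is consistent with the answer: one expects $n\bigl(1-\alpha_n(z,\lambda)\bigr)\to A_\lambda+(1-z)/\rho^{2}$ (the $\lambda$ coming from the ``stay at $0$'' step, the $\sqrt{2\lambda}/\rho$ from the rare departures at rate $1-f_n(0)\sim 2\sqrt2/(\rho\sqrt{\pi n})$, the $(1-z)/\rho^{2}$ from the per-block strict-crossing probability) and $n\,\beta_n(z,\lambda)\to A_\lambda$. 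But the entire content of the theorem lies in precisely these single-block asymptotics, and you do not prove them; you only list tools (excursion theory, the inverse-local-time Laplace transform, the kernel estimates of Lemma~\ref{lem_transition_asymptotics}) and concede that this is ``the main obstacle''. A complete argument would need, at minimum: $\sigma_1^n<\infty$ $\Prob_0$-a.s.; $n\,\Esp_0[1-e^{-\lambda\sigma_1^n}]\to A_\lambda$; $n\,\Prob_0\bigl(S_n(\sigma_1^n)\ge 1\bigr)\to 1/\rho^{2}$ together with negligibility of $\{S_n(\sigma_1^n)\ge 2\}$ and of the cross terms; and the corresponding expansion of $\beta_n$. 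None of this is formulated precisely, let alone established, so there is a genuine gap.

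There is also a concrete error in the closing step: your tightness bound ``$\Esp_0[S_n(t)]$ is of order $t\,\Prob_0(S_n(\sigma_1^n)\ge1)/\Esp_0[\sigma_1^n]$'' rests on the elementary renewal theorem with finite-mean cycles, but $\Esp_0[\sigma_1^n]=\infty$: on $\{X_{1/n}\ne 0\}$ the first on-grid return to $0$ dominates the hitting time of $0$, which for (sticky) Brownian motion started away from $0$ has infinite mean, since the time change only acts after $0$ is reached. The bound is therefore vacuous. This is fixable: for fixed $z\in(0,1)$ the map $t\mapsto\Esp_0[z^{S_n(t)}]$ is nonincreasing and $[0,1]$-valued, so convergence of its Laplace transforms to the (continuous) limit already yields pointwise convergence by the extended continuity theorem, and no separate tightness is needed; alternatively, $\Esp_0[S_n(t)]=\sum_{i\le[nt]}\Esp_0\bigl[g_n(X_{(i-1)/n})\bigr]$ is bounded directly from the kernel asymptotics of Lemma~\ref{lem_transition_asymptotics} (as in the proof of Proposition~\ref{thm_main_B}). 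In short: a viable reduction to block asymptotics, but the decisive limits are missing and the tightness justification as written is false.
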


\section{Main results}
\label{sec_main_result}

We consider three types of behaviors on the time interval $[t,t+s]$ of the process $X$ at the sticky threshold. For simplicity, we assume the threshold to be located at $0$.

The types of crossings are:
\begin{enumerate}
	\item The \textit{crossing of type 0}: the process
	crosses from one open half-plane to the other. For example if $X_t <0 $ and
	$X_{t+s}>0 $. 
	\item The \textit{crossing of type 1}: the process
	crosses from one open or closed half-plane to its complementary. For example if $X_t \le0 $ and
	$X_{t+s}>0 $ or if $X_t <0 $ and $X_{t+s} \ge 0 $.
	\item The \textit{crossing of type 2}: the process crosses from
	one closed half-plane to the other. For example $X_t \le 0 $ and
	$X_{t+s}\ge 0 $.
\end{enumerate}

The types of bouncings are:
\begin{enumerate}
	\item The \textit{bouncing of type 0}: the process
	bounces from $(0,\infty) $ back into $(0,\infty) $ after hitting $0$. For example if $X_t >0 $, $X_{t+s}>0 $,
	and for some $h\in[0,s] $, $X_{t+h}=0 $.
	\item The \textit{bouncing of type 1}: the process
	bounces from the closed half-plane to the open one or vice-versa. For example if $X_t > 0$ and
	$X_{t+s}\ge 0 $ or if $X_t >0 $ and $X_{t+s} \ge 0 $, and for some $h\in[0,s] $, $X_{t+h}=0 $.
	\item The \textit{bouncing of type 2}: the process bounces from $[0,\infty) $ to $[0,\infty) $. For example $X_t \ge 0 $, $X_{t+s}\ge 0 $, and for some $u\in[t,t+s] $, $X_{u}=0 $.
\end{enumerate}

We now define the crossing statistics based on discrete observations of the process. Assume we are given uniformly spaced in time observations of a process $X$ over the interval $[0,t]$, taken at times ${i/n : i = 0, 1, \dots, [nt]}$.
That is, we are given $(X_{i/n})_{i\le [nt]}$.

For $j\in \{0,1,2\}$, the \textit{number of crossings statistic of type $j$} 
is the number of times the time-discretization $(X_{i/n})_{i\le [nt]} $ of the process executes a crossing of type $j$ at $0$.
In particular, for all $t \in (0,\infty),n\in \mathbb{N} $,
\begin{itemize}
	\item[] $\cC{0}{n,t}(X) := 
	\sum_{i=1}^{[nt]} \indicb{\hfprocess{X}{}{i-1}\hfprocess{X}{}{i}<0} $,
	\item[] $\cC{1}{n,t}(X) := 
	\sum_{i=1}^{[nt]} \bigbraces{ \indicb{\hfprocess{X}{}{i-1}\hfprocess{X}{}{i}\le 0}
		- \indicb{\hfprocess{X}{}{i-1}=\hfprocess{X}{}{i} = 0} } $,
	\item[] $ \cC{2}{n,t}(X) := 
	\sum_{i=1}^{[nt]} \indicb{\hfprocess{X}{}{i-1}\hfprocess{X}{}{i}\le 0}$.
\end{itemize}
Similarly, for all $t \in (0,\infty),n\in \mathbb{N} $, we define the number of bouncings statistics as
\begin{itemize}
	\item[] $\cB{0}{n,t}(X) := 
	\sum_{i=1}^{[nt]} \indic{U^{n}_{i}(X)} \indicb{\hfprocess{X}{}{i-1}\hfprocess{X}{}{i}>0} $,
	\item[] $\cB{1}{n,t}(X) := 
	\sum_{i=1}^{[nt]} \indic{U^{n}_{i}(X)} \bigbraces{ \indicb{\hfprocess{X}{}{i-1}\hfprocess{X}{}{i}\ge 0}
		- \indicb{\hfprocess{X}{}{i-1}=\hfprocess{X}{}{i} = 0} } $,
	\item[] $ \cB{2}{n,t}(X) := 
	\sum_{i=1}^{[nt]} \indic{U^{n}_{i}(X)} \indicb{\hfprocess{X}{}{i-1}\hfprocess{X}{}{i}\ge 0}$.
\end{itemize}
where, for all $n,i $, $U^{n}_{i}(X) $ 
is the event defined as
\begin{equation}
	U^{n}_{i}(X) = \biggcubraces{
		\exists s \in \biggsqbraces{\frac{i-1}{n},\frac{i}{n}}: \; X_s = 0}.
\end{equation}

We note that these statistics are different when the process spends a positive amount of time at $0$ with positive probability. 
Indeed, if we define the \emph{difference statistics} as
\begin{align}
		\cD{1}{n,t}(X) 
		& := \cC{1}{n,t}(X) - \cC{0}{n,t}(X)
		=  \cB{1}{n,t}(X) - \cB{0}{n,t}(X)
		\\
		&= \sum_{i=1}^{[nt]} \indicb{\hfprocess{X}{}{i-1}=0;\;\hfprocess{X}{}{i}\not= 0} + \sum_{i=1}^{[nt]} \indicb{\hfprocess{X}{}{i-1}\not = 0;\;\hfprocess{X}{}{i}= 0},\\
		\cD{2}{n,t}(X) 
		&:= \cC{2}{n,t}(X) - \cC{1}{n,t}(X)
		=  \cB{2}{n,t}(X) - \cB{1}{n,t}(X) =  \sum_{i=1}^{[nt]} \indicb{\hfprocess{X}{}{i-1}=\hfprocess{X}{}{i}= 0},
\end{align}
then, for all $t,n $ and $i=1,2 $, for sticky Brownian motion $X $ it holds that
$\cD{i}{n,t}(X)>0 $ with positive probability. If $X$ is a Brownian motion, then $\cD{i}{n,t}(X)=0$ almost surely.

One may say that $\cB{j}{}$ are not statistics, since they depend on the events $(U_{n,i})_{n,i} $ which are not in the $\sigma $-algebra generated by high-frequency samples of the process. However, they are related to the \emph{difference statistics} $\cD{1}{},\cD{2}{} $.

\subsection{Limit behavior of the statistics} 

The main result of this paper is the following. 

\begin{theorem}
\label{thm_main}
Let $X$ be the sticky Brownian motion of stickiness parameter $\rho>0$ defined on the filtered probability space $(\Omega, \bF, \process{\bF_t},\Prob_x) $ such that $\Prob_x$-\as, $X_0 = x$ (in particular, $X$ is $\process{\bF_t}$-adapted),  
and let $\loct{X}{0}{} $ be the right local time at $0$ of $X$.
Then, the following convergences holds, 
\begin{enumerate}
	[label={ (\roman*)}]
	\item \label{item_strict} for every diverging sequence $\un$, it holds that:  
	$ \cC{0}{n,\cdot}(X)/\un  \convergence{\Prob_x\text{-}\ucp} 0 $, 
	\item 
	$ \cD{1}{n,\cdot}(X) /\sqn \convergence{\Prob_x\text{-}\ucp}
	4 \sqrt{\frac{2}{\pi}} \loct{X}{0}{} $ 
	and $ \cC{1}{n,\cdot}(X) /\sqn \convergence{\Prob_x\text{-}\ucp}
	4 \sqrt{\frac{2}{\pi}} \loct{X}{0}{} $, 
	\label{item_large} 
	\item 
	$ \cD{2}{n,\cdot}(X) /n \convergence{\Prob_x\text{-}\ucp} \rho \loct{X}{0}{} $
	and 
	$ \cC{2}{n,\cdot}(X) /n \convergence{\Prob_x\text{-}\ucp} \rho \loct{X}{0}{}$, \label{item_stays}
\end{enumerate}
where as per~\eqref{eq_txt_sticky_pathwise}, $\rho \loct{X}{0}{t} = \int_{0}^{t} \indicb{X_s = 0} \vd s  $, for all $t\ge 0 $.
\end{theorem}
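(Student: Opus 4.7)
The overall strategy is to reduce each statistic via Lemma~\ref{lem_Genon_Catalot}: writing the statistic as $\sum_i \chi^n_i$ with $\chi^n_i$ being $\bF^n_i$-measurable, the limit is read off from $\sum_i \Esp[\chi^n_i\mid \bF^n_{i-1}]$, whose individual terms are explicit through the sticky Brownian motion transition kernel (asymptotics from Appendix~\ref{app_asymptotics}). Two shortcuts reduce the overall workload: first, the decompositions $\cC{1}{n,\cdot}=\cC{0}{n,\cdot}+\cD{1}{n,\cdot}$ and $\cC{2}{n,\cdot}=\cC{1}{n,\cdot}+\cD{2}{n,\cdot}$ make it enough to handle $\cC{0}{n,\cdot}$, $\cD{1}{n,\cdot}$ and $\cD{2}{n,\cdot}$; second, since each of these processes is nondecreasing in $t$ and each candidate limit is continuous, Lemma~\ref{lem_ucp_convergence_condition} upgrades pointwise convergence in $\Prob_x$-probability to $\ucp$ convergence, so it suffices to argue at a fixed $t\geq 0$.

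The easiest part is (iii): the conditional expectation of $\indic{\hfprocess{X}{}{i-1}=0,\,\hfprocess{X}{}{i}=0}$ equals $\indic{\hfprocess X{}{i-1}=0}\,\Prob_0(X_{1/n}=0)$, the second factor tends to $1$, and dividing by $n$ produces exactly $n^{-1}\sum_i\indic{\hfprocess X{}{i-1}=0}$, which converges to $\rho\loct{X}{0}{}$ by Lemma~\ref{thm_Altmeyer}. The variance condition of Lemma~\ref{lem_Genon_Catalot} is trivial since each summand is of order $1/n$.

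For (ii) the statistic $\cD{1}{n,t}$ splits into a leaving-zero part $\sum_i\indic{\hfprocess X{}{i-1}=0,\,\hfprocess X{}{i}\neq 0}$ and an arriving-at-zero part $\sum_i\indic{\hfprocess X{}{i-1}\neq 0,\,\hfprocess X{}{i}=0}$. Appendix~\ref{app_asymptotics} provides the sharp asymptotic $\Prob_0(X_{1/n}\neq 0)=\frac{2\sqrt{2}}{\rho\sqn\sqrt\pi}+o(1/\sqn)$, so the leaving part conditional sum is asymptotically $\frac{2\sqrt 2}{\rho\sqrt\pi}\,n^{-1}\sum_i\indic{\hfprocess X{}{i-1}=0}$ and Lemma~\ref{thm_Altmeyer} delivers the limit $2\sqrt{2/\pi}\loct{X}{0}{}$. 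The arriving part has conditional expectation $\indic{\hfprocess X{}{i-1}\neq 0}\,\rho\, p_{1/n}(\hfprocess X{}{i-1},0)$, which fits Proposition~\ref{prop_SMA_1} with $g_n(u):=\sqn\rho\, p_{1/n}(u/\sqn,0)\indic{u\neq 0}$; mass conservation $\int_\IR p_{1/n}(v,0)\vd v=1-\rho p_{1/n}(0,0)=\Prob_0(X_{1/n}\neq 0)$ yields $M=\lim m_{\sqn\rho}(g_n)=2\sqrt{2/\pi}$, so the arriving part also converges to $2\sqrt{2/\pi}\loct{X}{0}{}$. Adding the two halves produces the factor $4\sqrt{2/\pi}$; in both pieces the variance condition reduces to $\sum_i\Esp[(\chi_i^n)^2\mid \bF^n_{i-1}]=\sqn^{-1}\sum_i\Esp[\chi_i^n\mid \bF^n_{i-1}]\to 0$, which is immediate.

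Part (i) is the most delicate step. The conditional probability $\Prob_y(\sgn X_{1/n}=-\sgn y,\,X_{1/n}\neq 0)$ for $y\neq 0$ is decomposed via the strong Markov property at the hitting time $\tau_0$ of $0$ and the reflection principle of Appendix~\ref{app_reflection_sticky}, yielding the bound $\Prob_y(\tau_0\leq 1/n)\,\Prob_0(X_{1/n}\neq 0)/2$. The second factor is $O(1/\sqn)$ by Appendix~\ref{app_asymptotics}, and summing $\Prob_y(\tau_0\leq 1/n)$ against the law of $\hfprocess X{}{i-1}$ fits Proposition~\ref{prop_SMA_1} with a test function whose $m_{\sqn\rho}$-mass remains bounded, so that $\Esp[\cC{0}{n,t}]=O(1)$ and hence $\cC{0}{n,t}/\un\to 0$ in $\Prob_x$-probability for every diverging $\un$. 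The statements about $\cC{1}{n,\cdot}$ and $\cC{2}{n,\cdot}$ then follow from the decomposition noted in the first paragraph, the $\cC{0}{n,\cdot}$ term (and $\cD{1}{n,\cdot}/n=o(1)$ for (iii)) being of lower order than the leading one. The main obstacle is precisely verifying the hypotheses of Proposition~\ref{prop_SMA_1} in this last step: the logarithmically strengthened integrability condition~\eqref{eq_connd_gn_aggreg} must be checked for an explicit $g_n$ arising from the hitting-time tail, which requires the sharp kernel asymptotics of Appendix~\ref{app_asymptotics} rather than the quick heuristic above.
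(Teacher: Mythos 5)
Your treatment of parts (ii) and (iii) follows essentially the paper's own proof: the same splitting of $\cD{1}{}$ into a leaving-zero and an arriving-at-zero piece, the same kernel asymptotics (Lemma~\ref{lem_transition_asymptotics}), Lemma~\ref{thm_Altmeyer} for the occupation-time factor, Proposition~\ref{prop_SMA_1} for the arriving piece, Lemma~\ref{lem_Genon_Catalot} together with $(\chi^n_i)^2=\chi^n_i/\sqn$, and Lemma~\ref{lem_ucp_convergence_condition} for the \ucpB{} upgrade. (For (iii) the paper uses a sandwich between $\frac1n\sum_i\indic{X_{(i-1)/n}=0}$ and that quantity plus $\frac1n\cC{1}{n,t}(X)$ instead of Genon--Catalot; this is a cosmetic difference. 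Also, your test function for the arriving piece carries a spurious factor $\sqn$: with it, $m_{\sqn\rho}(g_n)=n\rho(1-f_n(0))$ diverges like $\sqn$; the correct choice is the paper's $k_n(u)=\indic{u\neq0}\rho\,p_{\rho}(1/n,u/\sqn,0)$, applied to the statistic already normalized by $1/\sqn$.)

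For part (i) your route genuinely differs from the paper's, and as written it has a gap. The paper computes the exact limit of the conditional statistic, $\Cond{0}{n,t}(X)\convergence{\Prob_x}(1/\rho)\loct{X}{0}{t}$ (Proposition~\ref{thm_main_B}\ref{item_strict_B}), via Proposition~\ref{prop_SMA_1} applied to $h_n(x)=\sqn\,\Prob_{x/\sqn}(xX_{1/n}<0)$ with $m_{\sqn\rho}(h_n)\to1/\rho$ (Lemma~\ref{lem_transition_asymptotics}\ref{lem_hn_convergence}), and then transfers to $\cC{0}{n,t}(X)/\un$ through Lemma~\ref{lem_Genon_Catalot}. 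You instead bound the crossing probability from $y\neq0$ by $\tfrac12\Prob_y(\tau^{X}_0\le1/n)\,\Prob_0(X_{1/n}\neq0)$ (fine, granted the monotonicity of $u\mapsto\Prob_0(X_u=0)$, needed to replace $\Prob_0(X_{(1/n)-\tau^{X}_0}\neq0)$ by $\Prob_0(X_{1/n}\neq0)$), and then assert that Proposition~\ref{prop_SMA_1} yields $\Esp_x[\cC{0}{n,t}(X)]=O(1)$. That implication is not valid: Proposition~\ref{prop_SMA_1} gives convergence in $\Prob_x$-probability of $\frac1\sqn\sum_i k'_n(\sqn X_{(i-1)/n})$, with $k'_n(u)=\indic{u\neq0}\erfc(|u|/\sqrt{2})$, hence only tightness of the conditional sums; it gives no control of expectations (no uniform integrability is supplied). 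The repair stays within your own framework: your bound gives $\Cond{0}{n,t}(X)\le\tfrac12\sqn(1-f_n(0))\cdot\frac1\sqn\sum_i k'_n(\sqn X_{(i-1)/n})$, which is bounded in probability, so $\frac1\un\Cond{0}{n,t}(X)\to0$ and $\frac1{\un^2}\Cond{0}{n,t}(X)\to0$ in $\Prob_x$-probability, and Lemma~\ref{lem_Genon_Catalot} then yields $\cC{0}{n,t}(X)/\un\to0$; checking~\eqref{eq_connd_gn_aggreg} for $k'_n$ is immediate since $k'_n$ does not depend on $n$ and $k'_n(0)=0$. Note finally that the bound-only argument buys a lighter computation (no exact constant needed) but loses the limit $(1/\rho)\loct{X}{0}{t}$ of $\Cond{0}{n,t}(X)$, which the paper states and uses as a result in its own right.
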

We note that in the case of the standard Brownian motion $W$, all three statistics have
the same non-trivial limit for the same normalizing sequence $\sqrt{n} $.
Indeed, from, e.g.,~\cite[Theorem~1.1]{Jac98}, 
\begin{equation}
\label{rel_standard_Brownian_motion}
\xPlim_{n\lra \infty} \left( \frac{\cC{0}{n,t}(W)}{\sqn} \right)
= 
\xPlim_{n\lra \infty} \left( \frac{\cC{1}{n,t}(W)}{\sqn} \right)
=
\xPlim_{n\lra \infty} \left( \frac{\cC{2}{n,t}(W)}{\sqn} \right)
= \sqrt{\frac{2}{\pi}} \loct{W}{0}{t}.
\end{equation}
Also, the rates of convergence are known. 
In~\cite[Theorem~1.2]{Jac98}, a central limit theorem is proven  
for this convergence, with a rate of $n^{1/4} $.

Unlike the standard Brownian motion, we observe that there is a factor $4$ in the limits of $\cC{1}{n\cdot}/\sqn$ and $\cD{1}{n,\cdot}/\sqn $ for the sticky Brownian motion (see~Theorem~\ref{thm_main}\ref{item_large}).
The factor $4$ comes from the fact that $\cD{1}{n,t}/\sqn$ rewrites as sum of $4$ different terms, all converging to $\sqrt{2/\pi} \loct{X}{0}{t}$. 
Indeed, 
\begin{equation}
\begin{aligned}
	\frac1\sqn \cD{1}{n,t}(X) =& \frac1\sqn\sum_{i=1}^{[nt]}  \indicb{\hfprocess{X}{}{i-1}=0 ;\;\hfprocess{X}{}{i}> 0} 
	+
	\frac1\sqn\sum_{i=1}^{[nt]} \indicb{\hfprocess{X}{}{i-1}=0 ;\;\hfprocess{X}{}{i} < 0}
	\\&+ 
	\frac1\sqn\sum_{i=1}^{[nt]} \indicb{\hfprocess{X}{}{i-1} > 0;\;\hfprocess{X}{}{i}= 0}
	+
	\frac1\sqn\sum_{i=1}^{[nt]} \indicb{\hfprocess{X}{}{i-1} < 0;\;\hfprocess{X}{}{i}= 0}
	.
\end{aligned}
\end{equation}
Each term converges to $0$ if the process is a standard Brownian motion $W$.
Instead, for the sticky Brownian motion, the limit is non trivial and it corresponds to the case of the renormalized number of crossings statistic for Brownian motion: $\cC{0}{n,\cdot}(W)/\sqn$.

Regarding the bouncing statistics, we prove that: 

\begin{theorem}
\label{thm_main_R0}
Consider the setting of Theorem~\ref{thm_main}. Then
\begin{enumerate}
	[label={ (\roman*)}]
	\item \label{item_strict_R0} for every diverging sequence $\un$, 
	$\cB{0}{n,\cdot}/\un$ has the same limit as $\cC{0}{n,\cdot}(X) /\un$,
	\item 
	$\cB{j}{n,\cdot}/n^{j/2}$ has the same limit as $\cC{j}{n,\cdot}(X) /n^{j/2}$, for $j=1,2$. 
	\label{item_large_R0}
\end{enumerate}
\end{theorem}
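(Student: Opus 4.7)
The plan is to reduce the three convergences in the theorem to a single martingale estimate. First, I note that the three differences $\cB{j}{n,\cdot}(X) - \cC{j}{n,\cdot}(X)$ all coincide: the identities $\cD{1}{n,\cdot}(X) = \cC{1}{n,\cdot}(X) - \cC{0}{n,\cdot}(X) = \cB{1}{n,\cdot}(X) - \cB{0}{n,\cdot}(X)$ and $\cD{2}{n,\cdot}(X) = \cC{2}{n,\cdot}(X) - \cC{1}{n,\cdot}(X) = \cB{2}{n,\cdot}(X) - \cB{1}{n,\cdot}(X)$ recalled in Section~\ref{sec_main_result} yield
\begin{equation}
M^n \;:=\; \cB{0}{n,\cdot}(X) - \cC{0}{n,\cdot}(X) \;=\; \cB{1}{n,\cdot}(X) - \cC{1}{n,\cdot}(X) \;=\; \cB{2}{n,\cdot}(X) - \cC{2}{n,\cdot}(X).
\end{equation}
Since each $\cB{j}{n,\cdot}(X)$ is non-decreasing in time and every limit asserted in Theorem~\ref{thm_main} is continuous in $t$, Lemma~\ref{lem_ucp_convergence_condition} promotes pointwise-in-probability convergence to $\xucp{\Prob_x}$-convergence. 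Combined with Theorem~\ref{thm_main}, the task thus reduces to showing $M^n_t/a_n \convergence{\Prob_x} 0$ at each fixed $t$, for every $a_n \to \infty$; the slowest rate $a_n = u_n$ in item~\ref{item_strict_R0} subsumes the rates $\sqn$ and $n$ of item~\ref{item_large_R0}.

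To analyse $M^n$ I would invoke the reflection principle at $0$ for the sticky Brownian motion established in Appendix~\ref{app_reflection_sticky}. Setting $\bF^n_i := \bF_{i/n}$ and conditioning on $\bF^n_{i-1} \cap \{X_{(i-1)/n}>0\}$, continuity of $X$ forces every path with $X_{i/n} < 0$ to realise $U^n_i(X)$, and reflection after the first hitting time of $0$ yields
\begin{equation}
\Prob\bigbraces{X_{i/n} > 0,\, U^n_i(X) \mid \bF^n_{i-1}} \;=\; \Prob\bigbraces{X_{i/n} < 0 \mid \bF^n_{i-1}},
\end{equation}
together with the symmetric identity on $\{X_{(i-1)/n}<0\}$; on $\{X_{(i-1)/n}=0\}$ the type-$0$ bouncing and crossing indicators both vanish by definition. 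Summing over $i$ shows that $M^n$ is an $(\bF^n_i)$-martingale with jumps in $\{-1,0,1\}$. Moreover, since type-$0$ crossings and bouncings are mutually exclusive within a single interval, the conditional second moment of the $i$-th increment equals $2\,\Esp[\indic{\hfprocess{X}{}{i-1}\hfprocess{X}{}{i}<0}\mid \bF^n_{i-1}]$, so the accumulated conditional variance at time $t$ equals $2\,\Cond{0}{n,t}$, where $\Cond{0}{n,\cdot}$ denotes the $(\bF^n_i)$-compensator of $\cC{0}{n,\cdot}(X)$.

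I would then apply Lemma~\ref{lem_Genon_Catalot} to the array $\chi^n_i$ equal to $1/a_n$ times the difference of the type-$0$ bouncing and crossing indicators in $[(i-1)/n, i/n]$. Condition~(i) is immediate from the martingale identity above, while condition~(ii) rewrites as $2\,\Cond{0}{n,t}/a_n^2 \convergence{\Prob_x} 0$, which holds for every $a_n \to \infty$ provided $\Cond{0}{n,t}$ is tight in $n$. The main technical obstacle is establishing this tightness: the tightness of $\cC{0}{n,t}$ implied by Theorem~\ref{thm_main}\ref{item_strict} (or Theorem~\ref{thm_Portenko}) does not automatically upgrade to a compensator bound, so I would revisit the transition-kernel asymptotics of Appendix~\ref{app_asymptotics}. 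Starting from
\begin{equation}
\Esp_x[\cC{0}{n,t}] \;=\; \sum_{i=1}^{[nt]} \int_\IR p_{(i-1)/n}(x, y)\, \Prob_y\bigbraces{y\,X_{1/n} < 0}\, \vd y,
\end{equation}
one controls the inner first-exit probability by the Brownian hitting-time factor $\sim \Phi(-|y|\sqn)$ multiplied by a sticky factor that accounts for the delay at $0$; summing over $i$ then gives $\sup_n \Esp_x[\cC{0}{n,t}] < \infty$, whence $\sup_n \Esp_x[\Cond{0}{n,t}] < \infty$ as well, and the argument closes.
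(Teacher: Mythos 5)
Your proposal is correct and follows essentially the same route as the paper: the key ingredient in both is the reflection principle of Lemma~\ref{lem_sticky_refl_principle} (via the strong Markov property at the first hitting time of $0$ inside each interval) to identify the conditional probability of a type-$0$ bouncing with that of a type-$0$ crossing, followed by the discrete martingale Lemma~\ref{lem_Genon_Catalot}. The only genuine difference is packaging: you apply the lemma to the single difference martingale $M^n=\cB{0}{n,\cdot}(X)-\cC{0}{n,\cdot}(X)=\cB{1}{n,\cdot}(X)-\cC{1}{n,\cdot}(X)=\cB{2}{n,\cdot}(X)-\cC{2}{n,\cdot}(X)$, which settles all three items at once, whereas the paper applies it to $\cB{0}{n,\cdot}(X)/\un$ directly (showing $\CondB{0}{n,t}=\Cond{0}{n,t}$) and then obtains item~\ref{item_large_R0} by telescoping through $\cD{j}{n,\cdot}(X)=\cB{j}{n,\cdot}(X)-\cB{j-1}{n,\cdot}(X)$ and Theorem~\ref{thm_main}; these are equivalent algebraic rearrangements. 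Your \ucpB{} upgrade via monotonicity of $\cB{j}{n,\cdot}(X)$ and Lemma~\ref{lem_ucp_convergence_condition} matches the paper as well.

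The one misstep is your claim that tightness of $\Cond{0}{n,t}(X)$ is ``the main technical obstacle.'' It is not: Proposition~\ref{thm_main_B}\ref{item_strict_B}, already proved in Section~\ref{ssec_type0} from Proposition~\ref{prop_SMA_1} and Lemma~\ref{lem_transition_asymptotics}\ref{lem_hn_convergence}, gives $\Cond{0}{n,t}(X)\convergence{\Prob_x}(1/\rho)\loct{X}{0}{t}$, and convergence in probability to a finite limit already yields boundedness in probability, which is all that condition (ii) of Lemma~\ref{lem_Genon_Catalot} requires to get $2\,\Cond{0}{n,t}(X)/a_n^2\convergence{\Prob_x}0$ for any diverging $a_n$. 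Your proposed detour through a uniform bound on $\Esp_x\bigsqbraces{\cC{0}{n,t}(X)}$ via fresh kernel estimates is therefore unnecessary, and as sketched it is also incomplete (moment bounds are a strictly stronger statement than what is needed and do not follow from the convergence in probability you invoke); simply cite Proposition~\ref{thm_main_B}\ref{item_strict_B} at that point and your argument closes.
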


The difference statistics $\cD{\cdot}{}$ are the leading parts of number of crossings and bouncings statistics of type $1$ and $2$.

The proof of the above results relies, by virtue of Lemma~\ref{lem_Genon_Catalot}, on the following limit behavior of the conditional version of the statistics $\cC{0}{}$, $\cC{1}{}$, and $\cC{2}{}$. 
These are the statistics $\Cond{0}{},\Cond{1}{},\Cond{2}{} $ defined for all $n,t $ as 
\begin{itemize}
\item[] $\Cond{0}{n,t}(X) := \sum_{i=1}^{[nt]} \Esp_x \bigbraces{ \indicb{\hfprocess{X}{}{i-1}\hfprocess{X}{}{i}<0} \big| \bF_{\frac{i-1}{n}} } $,
\item[] $\Cond{1}{n,t}(X) := \sum_{i=1}^{[nt]} \Esp_x \bigbraces{ \indicb{\hfprocess{X}{}{i-1}\hfprocess{X}{}{i}\le 0}
	- \indicb{\hfprocess{X}{}{i-1}=\hfprocess{X}{}{i} = 0} \big| \bF_{\frac{i-1}{n}} } $,
\item[] $ \Cond{2}{n,t}(X) := \sum_{i=1}^{[nt]} \Esp_x \bigbraces{ \indicb{\hfprocess{X}{}{i-1}\hfprocess{X}{}{i}\le 0} \big| \bF_{\frac{i-1}{n}} }$.
\end{itemize}
For these, the following result holds.

\begin{proposition}
\label{thm_main_B}
Consider the setting of Theorem~\ref{thm_main}. 
Then 
\begin{enumerate}
	[label={ (\roman*)}]
	\item \label{item_strict_B} $\Cond{0}{n,\cdot}(X) \convergence{\Prob_x\text{-}\ucp}
	({1}/{\rho}) \, \loct{X}{0}{} $, 
	\item \label{item_large_B} $\Cond{j}{n,\cdot}(X)/n^{j/2}$ has the same limit as $\cC{j}{n,\cdot}(X)/n^{j/2}$ for $j=1,2$.
\end{enumerate}
\end{proposition}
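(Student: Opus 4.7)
The Markov property of $X$ lets me rewrite each $\Cond{j}{n,t}(X)$ as a sum of deterministic functions of $\hfprocess{X}{}{i-1}$. Setting $g_n(y):=\Prob_y(y\,X_{1/n}<0)$ (so $g_n(0)=0$ and $g_n$ is even), $k_n(y):=\Prob_y(X_{1/n}=0)$, and $p_n := k_n(0)$, one obtains
\begin{equation}
\begin{aligned}
\Cond{0}{n,t}(X) &= \sum_{i=1}^{[nt]} g_n(\hfprocess{X}{}{i-1}),\\
\Cond{1}{n,t}(X) &= \sum_{i=1}^{[nt]} \Big(\indic{\hfprocess{X}{}{i-1}=0}(1-p_n)+\indic{\hfprocess{X}{}{i-1}\neq 0}(g_n+k_n)(\hfprocess{X}{}{i-1})\Big),\\
\Cond{2}{n,t}(X) &= \sum_{i=1}^{[nt]}\Big(\indic{\hfprocess{X}{}{i-1}=0} + \indic{\hfprocess{X}{}{i-1}\neq 0}(g_n+k_n)(\hfprocess{X}{}{i-1})\Big).
\end{aligned}
\end{equation}
The small-time sticky Brownian kernel asymptotics (from Appendix~\ref{app_asymptotics}) that are needed are: (a) $n\int_{\IR} g_n(y)\vd y\to 1/\rho$; (b) $1-p_n \sim (2/\rho)\sqrt{2/(\pi n)}$; (c) $\int_{\IR} k_n(y)\vd y = \rho(1-p_n) \sim 2\sqrt{2/(\pi n)}$, where (c) follows from reversibility of the speed measure $\vd y + \rho\delta_0(\vd y)$.

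\textbf{Part (i).} Apply Proposition~\ref{prop_SMA_1} with $H_n(z):=\sqn\, g_n(z/\sqn)$, so that $\Cond{0}{n,t}(X) = \sqn^{-1}\sum_{i=1}^{[nt]} H_n(\sqn\,\hfprocess{X}{}{i-1})$. A change of variables combined with $g_n(0)=0$ gives $m_{\sqn\rho}(H_n) = n\int g_n(y)\vd y$, which tends to $1/\rho$ by~(a). The decay/integrability requirements~\eqref{eq_connd_gn_aggreg} are verified from the Gaussian tails of $g_n$ outside the window $|y|\lesssim 1/\sqn$. Proposition~\ref{prop_SMA_1} then yields $\Cond{0}{n,\cdot}(X) \convergence{\xucp{\Prob_x}} (1/\rho)\loct{X}{0}{}$.

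\textbf{Part (ii).} It suffices to show $(\cC{j}{n,\cdot}(X) - \Cond{j}{n,\cdot}(X))/n^{j/2} \convergence{\Prob_x} 0$ and invoke Theorem~\ref{thm_main}. Apply Lemma~\ref{lem_Genon_Catalot} to the martingale differences $\chi^n_i := (Y^n_i - \Esp_x[Y^n_i\mid \bF_{(i-1)/n}])/n^{j/2}$, where $Y^n_i$ is the summand of $\cC{j}{n,t}(X)$. Condition~(i) is automatic. Condition~(ii), using $\Esp[(Y^n_i - \Esp[\cdot])^2\mid\cdot]\le \Esp[Y^n_i\mid\cdot]$ for $\{0,1\}$-valued $Y^n_i$, reduces to $\Cond{j}{n,t}(X)/n^j\convergence{\Prob_x}0$. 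For $j=2$ this is immediate since $\Cond{2}{n,t}(X)\le [nt]$. For $j=1$, the a~priori bound $\Cond{1}{n,t}(X) = O_{\Prob_x}(\sqn)$ follows from the above decomposition, combining: (b) together with Lemma~\ref{thm_Altmeyer} for the $\indic{\hfprocess{X}{}{i-1}=0}$ contribution (of order $\sqn$); part~(i) for the $g_n$ contribution (of order $1$); and (c) together with Proposition~\ref{prop_SMA_1} applied to $z\mapsto \sqn\,k_n(z/\sqn)$ for the $k_n$ contribution (of order $\sqn$). Upgrading pointwise convergence in probability to \ucp{} is standard since $\Cond{j}{n,\cdot}$ is nondecreasing and the limit is continuous (Lemma~\ref{lem_ucp_convergence_condition}).

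\textbf{Main obstacle.} The critical technical step is~(a): $n\int g_n\to 1/\rho$. The order $\int g_n = O(1/n)$, in contrast to the order $O(1/\sqn)$ of the standard Brownian analogue, reflects stickiness at $0$: the probability $\Prob_0(X_u<0)$ is only $\sim (1/\rho)\sqrt{2u/\pi}$ instead of $1/2$, so that a full crossing within a single step of size $1/n$ has probability $O(1/n)$ rather than $O(1/\sqn)$. A clean route is to condition on the first hitting time $\tau_0$ of $0$ from $y>0$, plug in the small-time expansion of $\Prob_0(X_u<0)$ from Appendix~\ref{app_asymptotics}, and reduce the resulting double integral via the Beta identity $B(1/2,3/2)=\pi/2$; symmetrizing in $y$ yields the factor $1/\rho$.
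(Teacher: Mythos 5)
Your proposal is correct, and its two halves relate differently to the paper. Part (i) is essentially the paper's own argument: rewrite $\Cond{0}{n,t}(X)=\frac1\sqn\sum_{i}h_n(\sqn\hfprocess{X}{}{i-1})$ with $h_n(z)=\sqn g_n(z/\sqn)$ and apply Proposition~\ref{prop_SMA_1}; the two facts you need, $m_{\sqn\rho}(h_n)=n\int g_n\to1/\rho$ and condition~\eqref{eq_connd_gn_aggreg}, are exactly Lemma~\ref{lem_transition_asymptotics}\ref{lem_hn_convergence}, which the paper proves from the explicit kernel in Appendix~\ref{app_asymptotics}. Your first-passage/Beta-integral sketch for $n\int g_n\to 1/\rho$ and the reversibility identity $\int_{\IR}\Prob_y(X_{1/n}=0)\vd y=\rho(1-f_n(0))$ are clean alternative derivations of those asymptotics. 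Part (ii) is where you genuinely diverge. The paper computes the limits of the conditional statistics directly: for $j=1$ it proves $\CondD{1}{n,\cdot}/\sqn\to4\sqrt{2/\pi}\,\loct{X}{0}{}$ (Lemma~\ref{lem_crossings_exp_1}, splitting into the $\indic{\hfprocess{X}{}{i-1}=0}$ term, handled by Lemma~\ref{thm_Altmeyer} and the asymptotics of $1-f_n(0)$, and the $k_n$ term, handled by Proposition~\ref{prop_SMA_1}), and for $j=2$ it sandwiches $\Cond{2}{n,t}/n$ between $\frac1n\sum_i\indic{\hfprocess{X}{}{i-1}=0}$ and that quantity plus $\frac1n\Cond{1}{n,t}$. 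In the paper these conditional limits (part (i) and Lemma~\ref{lem_crossings_exp_1}) are precisely the input, via Lemma~\ref{lem_Genon_Catalot}, for Theorem~\ref{thm_main} itself. You argue in the opposite direction: you take Theorem~\ref{thm_main} as given and show $(\cC{j}{n,t}(X)-\Cond{j}{n,t}(X))/n^{j/2}\to0$ by Lemma~\ref{lem_Genon_Catalot}, which only needs the crude bounds $\Cond{2}{n,t}(X)\le[nt]$ and $\Cond{1}{n,t}(X)=O_{\Prob_x}(\sqn)$. This is non-circular, since the paper's proof of Theorem~\ref{thm_main} uses only part (i) of this Proposition and Lemma~\ref{lem_crossings_exp_1}, never part (ii); your route spares you from recomputing the constant $4\sqrt{2/\pi}$ for the conditional statistic, at the price that your proof of (ii) could not serve, as the paper's does, as a stepping stone towards Theorem~\ref{thm_main}.

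One small correction to the order bound for the $k_n$-contribution: Proposition~\ref{prop_SMA_1} cannot be applied to $z\mapsto\sqn\,k_n(z/\sqn)$ (your $k_n$ is the paper's $f_n$), because for that test function $m_{\sqn\rho}$ diverges: the atom at $0$ contributes $\sqn\rho\cdot\sqn f_n(0)\approx\rho n$ and the Lebesgue part is $n\int f_n\approx 2\sqrt{2n/\pi}$, so~\eqref{eq_connd_gn_aggreg} fails. The correct test function is $z\mapsto\indic{z\neq0}f_n(z/\sqn)$, i.e. the paper's $k_n$ of Lemma~\ref{lem_transition_asymptotics}\ref{lem_kn_convergence}; since the indicator is anyway present in your decomposition, $\sum_i\indic{\hfprocess{X}{}{i-1}\neq0}f_n(\hfprocess{X}{}{i-1})=\sqn\cdot\frac1\sqn\sum_i k_n(\sqn\hfprocess{X}{}{i-1})=O_{\Prob_x}(\sqn)$, as you claim. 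This is a writing slip, not a gap in the argument.
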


The proof of Theorem~\ref{thm_main}\ref{item_strict} is done by bounding the distance between $ \Cond{0}{}$ and $\cC{0}{} $ and showing that
both the distance and the limit of $\Cond{0}{}(X)$, if renormalized by $1/\un $, vanish in probability, as $n\lra \infty $.

Let us note that the only case, for which the limits of the expected and non-expected versions of
the number of crossings statistics are not necessarily the same, is for the Type-$0$ crossings (strict crossings).
In fact, they cannot be the same since $\cC{0}{n,t}(X) $ and its limit take values in $\IN \cup (+\infty)$, while the law of $(1/\rho) \loct{X}{0}{t} $, the limit of $\Cond{0}{n,t}(X) $, is continuous. 
The discrete limit law of $\cC{0}{n,t}(X)  $ is described in~\cite[\S 8]{Port94}, which is restated here in Theorem~\ref{thm_Portenko}.
Let us also mention that Theorem~\ref{thm_main}\ref{item_strict} can also be inferred directly from Theorem~\ref{thm_Portenko}.

\subsection{Estimation}

In~\cite{Anagnostakis2022,AnagnostakisM2023} an estimator was proposed based on a test function $g$ and
a normalizing sequence $\un $.
More precisely, let $g $ be a bounded integrable function that satisfies $g(0)=0 $ and $(\un)_n $ a sequence that
as $n\lra\infty $: $\un \lra \infty $, $\un/n \lra 0 $, $\mathcal H_t $ be the event $\{\tau^{X}_0<t\} = \{\exists\; s < t:\; X_s = 0\} $  and $\varrho^{(0)}_{n}(X) $ the statistic
\begin{equation}
\label{eq_first_stickiness_estimator}
\varrho^{(0)}_{n}(X) := 
\left( \int_{\IR} g \vd x \right) \frac{1}{\un} \frac{\sum_{i=1}^{[nt]} \indicb{\hfprocess{X}{}{i-1}=0}}{
	\sum_{i=1}^{[nt]} g(\un \hfprocess{X}{}{i-1}) }.
\end{equation}
Then, $\varrho^{(0)}_{n}(X) $ is a consistent estimator of $\rho $, conditionally on the event $\mathcal H_t $, i.e., $\varrho^{(0)}_{n}(X) \lra \rho $, in $\Prob^{\mathcal H_t}_x $-probability, as $n\lra \infty $.

We now devise a consistent stickiness parameter estimator based
on number of crossings statistics.
This provides an alternative estimator to~\eqref{eq_first_stickiness_estimator}.

Like~\eqref{eq_first_stickiness_estimator}, this
new estimator also converges conditionally on the event that $0$ has been reached before time $t$.
If the threshold is not reached, the number of crossings statistics are all trivially null.

Then, the following conditional convergence in probability holds.

\begin{proposition}
\label{prop_estimation}
Consider the setting of Theorem~\ref{thm_main} 
and let $t>0$.
The statistic 
\begin{equation}
	\label{eq_main_estimation}
	\begin{aligned}
		\varrho_n(X) &= \left(4 \sqrt{\frac 2 \pi} \right) \frac{1}{\sqn}\frac{\cC{2}{n,t}(X)}{ \cC{1}{n,t}(X)}
	\end{aligned}
\end{equation}
is a consistent estimator of $\rho $, conditionally on the event $\mathcal H_t $, i.e., $\varrho_n(X) \lra \rho $, in $\Prob^{\mathcal H_t}_x $-probability, as $n\lra \infty $.
\end{proposition}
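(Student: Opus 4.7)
The strategy is to rewrite the estimator so that both its numerator and its denominator appear already in the normalized form handled by Theorem~\ref{thm_main}, apply the continuous mapping theorem (combined with Slutsky), and finally pass from convergence in $\Prob_x$-probability to convergence in $\Prob_x^{\mathcal H_t}$-probability. Indeed,
\begin{equation}
\varrho_n(X) \; = \; 4\sqrt{\tfrac{2}{\pi}}\;\frac{1}{\sqn}\,\frac{\cC{2}{n,t}(X)}{\cC{1}{n,t}(X)} \; = \; 4\sqrt{\tfrac{2}{\pi}} \,\frac{ \cC{2}{n,t}(X)/n }{ \cC{1}{n,t}(X)/\sqn },
\end{equation}
so the claim reduces to identifying the $\Prob_x$-probability limit of the ratio on the right-hand side.

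By Theorem~\ref{thm_main}\ref{item_stays} we have $\cC{2}{n,t}(X)/n \to \rho\,\loct{X}{0}{t}$ in $\Prob_x$-probability, and by Theorem~\ref{thm_main}\ref{item_large} we have $\cC{1}{n,t}(X)/\sqn \to 4\sqrt{2/\pi}\,\loct{X}{0}{t}$ in $\Prob_x$-probability; here both \ucp-convergences are evaluated at the fixed time $t$. Applying the continuous mapping theorem to the map $(a,b)\mapsto a/b$, which is continuous at every point with $b\neq 0$, gives
\begin{equation}
\frac{\cC{2}{n,t}(X)/n}{\cC{1}{n,t}(X)/\sqn} \;\xrightarrow[n\to\infty]{\Prob_x}\; \frac{\rho\,\loct{X}{0}{t}}{4\sqrt{2/\pi}\,\loct{X}{0}{t}} \;=\; \frac{\rho}{4\sqrt{2/\pi}} \qquad \text{on } \{\loct{X}{0}{t} >0\},
\end{equation}
and multiplying by $4\sqrt{2/\pi}$ yields $\varrho_n(X)\to \rho$ in $\Prob_x$-probability on the event $\{\loct{X}{0}{t}>0\}$.

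The main point requiring care is to identify this event with $\mc H_t$ up to $\Prob_x$-null sets, since the continuous mapping argument breaks down exactly where the local time vanishes. For this, I would use the identity $\rho\,\loct{X}{0}{t} = \int_0^t \indic{X_s = 0}\vd s$ from~\eqref{eq_txt_sticky_pathwise}: on $\mc H_t$, the process hits $0$ at some time $\tau^X_0 < t$, and by the strong Markov property at $\tau^X_0$ together with the sticky behavior at $0$ (from $0$, the Lebesgue measure of the visits to $0$ in any positive time interval is $\Prob_0$-\as{} positive, again via~\eqref{eq_txt_sticky_pathwise}), one deduces $\int_0^t \indic{X_s=0}\vd s > 0$ $\Prob_x$-\as{} on $\mc H_t$; hence $\mc H_t \subseteq \{\loct{X}{0}{t}>0\}$ up to a $\Prob_x$-null set.

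Finally, to translate the $\Prob_x$-convergence into $\Prob_x^{\mc H_t}$-convergence, it suffices to note that if $\Prob_x(\mc H_t)>0$ then $\Prob_x^{\mc H_t}$ is absolutely continuous with respect to $\Prob_x$, so convergence in $\Prob_x$-probability implies convergence in $\Prob_x^{\mc H_t}$-probability; the case $\Prob_x(\mc H_t)=0$ is vacuous. I expect the only mildly delicate step to be the positivity of $\loct{X}{0}{t}$ on $\mc H_t$, all the rest being direct consequences of Theorem~\ref{thm_main} and standard continuous-mapping/Slutsky arguments.
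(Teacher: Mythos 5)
Your proposal is correct and follows essentially the same route as the paper: take the two limits from Theorem~\ref{thm_main}\ref{item_large} and \ref{item_stays}, form the ratio, restrict to the event $\{\loct{X}{0}{t}>0\}$, identify that event $\Prob_x$-a.s.\ with $\mc H_t$, and conclude by conditioning on $\mc H_t$ via Bayes' rule. The only cosmetic differences are that the paper handles the division by multiplying through by $\indic{\loct{X}{0}{t}>0}$ rather than invoking continuous mapping on an event, and it justifies $\mc H_t = \{\loct{X}{0}{t}>0\}$ $\Prob_x$-a.s.\ by citing~\cite[Corollary~29.18]{Kal} instead of your strong-Markov/stickiness sketch (whose sub-claim that the occupation time of $0$ started from $0$ is a.s.\ immediately positive still needs a one-line justification, e.g.\ via the time change in~\ref{item_sticky_P1} or a Blumenthal zero--one law argument).
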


\begin{remark}
The result holds also if  in \eqref{eq_main_estimation} one replaces $(\cC{1}{n,t}(X),\cC{2}{n,t}(X)) $ with $(\cB{1}{n,t}(X),\cB{2}{n,t}(X)) $ or $(\cD{1}{n,t}(X),\cD{2}{n,t}(X)) $.
\end{remark}

\section{Proofs of main results}
\label{sec_proof_B}

In this section we prove Theorem~\ref{thm_main} and Proposition~\ref{thm_main_B} for each type of crossing. The proofs are organized by crossing type: Section~\ref{ssec_type0} covers Type-0 crossings, Section~\ref{ssec_type1} covers Type-1 crossings, and Section~\ref{ssec_type2} covers Type-2 crossings. Following this, Section~\ref{sec_bouncings} contains the proof of Theorem~\ref{thm_main_R0} (asymptotics of bouncings statistics), and finally, Section~\ref{sec:proof_prop_estim} contains the proof of Proposition~\ref{prop_estimation} (stickiness parameter estimation).

For these, we need some results on the asymptotic behavior of the kernel, whose proof is deferred to Appendix~\ref{app_asymptotics}. In the statement we use the notation~\eqref{def_mn}--\eqref{notaz_m} of Proposition~\ref{prop_SMA_1}.

\begin{lemma}
\label{lem_transition_asymptotics}
Consider the setting of Theorem~\ref{thm_main}. 
Let $ (f_n,k_n,g_n,h_n;\,n\in\IN) $ be the functions defined for all $n \in \IN$ and $x\in \IR $ by 
$f_n(x) := \Prob_{x} \left(  X_{1/n} = 0 \right) $, 
$k_n(x) := \indicb{x\not = 0} f_n(x/\sqn) $, 
$g_n(x) := \Prob_{x} \left( x X_{1/n} < 0 \right) $, 
$h_n(x) := \sqn g_n(x/\sqn) $.
It holds that 
\begin{enumerate}
	[label={\upshape (\roman*)}]
	\item \label{lem_fn0_convergence} $\lim_{n\ra \infty} f_n(0) = 1 $ and 
	$\lim_{n\ra \infty} \sqn (1- f_n(0)) = 
	{2 \sqrt{2}}/{\rho \sqrt \pi} $,
	\item \label{lem_kn_convergence} the sequence $(k_n)_n $ satisfies~\eqref{eq_connd_gn_aggreg} and $\lim_{n\ra \infty} m_{\sqn \rho}( k_n )  =  2 \sqrt{{2}/{\pi}} $,
	\item \label{lem_hn_convergence} the sequence $(h_n)_n $ satisfies~\eqref{eq_connd_gn_aggreg} and $\lim_{n\ra \infty} m_{\sqn \rho}( h_n )  = 1/{\rho} $.
\end{enumerate}
\end{lemma}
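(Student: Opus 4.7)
The strategy is to reduce each claim to explicit computations on the sticky Brownian motion semigroup at $0$, combined with the strong Markov property at $\tau_0 = \inf\{s\ge 0\colon X_s = 0\}$. Part~\ref{lem_fn0_convergence} rests on the closed-form expression
\begin{equation}
f_n(0) = \Prob_0(X_{1/n}=0) = e^{2/(n\rho^2)}\,\erfc\bigl(\sqrt{2/n}/\rho\bigr),
\end{equation}
classical in the sticky-Brownian-motion literature (see e.g.~\cite{BorSal}). Taylor-expanding $\erfc(u)=1-2u/\sqrt\pi+\Ord(u^3)$ and $e^{u^2}=1+\Ord(u^2)$ near $u=0$ immediately gives $1-f_n(0) = 2\sqrt{2}/(\rho\sqrt{\pi n}) + \Ord(1/n)$, from which both claims in~\ref{lem_fn0_convergence} follow.

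For parts~\ref{lem_kn_convergence} and~\ref{lem_hn_convergence}, I would exploit that away from $0$ the sticky process behaves as a standard Brownian motion until first reaching $0$, so under $\Prob_y$ with $y\neq 0$ the hitting time $\tau_0$ has the usual density $q_y(s) = |y|/\sqrt{2\pi s^3}\,e^{-y^2/(2s)}$. The strong Markov property at $\tau_0$, together with the symmetry of $X$ about $0$, yields for $y\neq 0$
\begin{equation}
f_n(y) = \int_0^{1/n} q_y(s)\,\Prob_0(X_{1/n-s}=0)\,ds, \qquad g_n(y) = \int_0^{1/n} q_y(s)\,\Prob_0(X_{1/n-s}<0)\,ds,
\end{equation}
where by symmetry $\Prob_0(X_u<0)=(1-f_u(0))/2 = \Ord(\sqrt u)$ as $u\to 0$ by part~\ref{lem_fn0_convergence}.

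Since $k_n(0) = h_n(0) = 0$, the quantities $m_{\sqn\rho}(k_n)$ and $m_{\sqn\rho}(h_n)$ reduce, after the change of variables $y = x/\sqn$ and Fubini, to one-dimensional integrals on $s\in[0,1/n]$. The auxiliary identity $\int_\IR q_y(s)\,dy = \sqrt{2/(\pi s)}$ combined with part~\ref{lem_fn0_convergence} gives $m_{\sqn\rho}(k_n)\to 2\sqrt{2/\pi}$; for $h_n$, the substitution $s = u/n$ reduces the computation to the Beta integral $\int_0^1\sqrt{(1-u)/u}\,du = \pi/2$, yielding $m_{\sqn\rho}(h_n)\to 1/\rho$.

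Verifying the four-term condition~\eqref{eq_connd_gn_aggreg} is the most tedious step and the main obstacle. The pointwise terms $g_n^2(\sqn x)/n$ are controlled via $|f_n|,|g_n|\le 1$, and the last term vanishes once $m_{\sqn\rho}(|k_n|)$ and $m_{\sqn\rho}(|h_n|)$ are known to converge. For the $L^2$-type bound $m_{\sqn\rho}(g_n^2)/\sqn\to 0$, the required sharper estimate, namely $g_n(y)\le (C/\sqn)\erfc(|y|\sqn/\sqrt{2})$, is obtained by inserting $\Prob_0(X_{1/n-s}<0) = \Ord(1/\sqn)$ into the convolution formula before bounding $q_y$. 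Finally, the boundedness of $\int_\IR|x|k_n(x)\,dx$ and $\int_\IR|x|h_n(x)\,dx$ in $n$ follows from the Gaussian moment identity $2\int_0^\infty y\,q_y(s)\,dy = 1$, which (after the same Fubini argument) yields $\int|y|f_n(y)\,dy = \Ord(1/n)$ and $\int|y|g_n(y)\,dy = \Ord(n^{-3/2})$; these make the third term in~\eqref{eq_connd_gn_aggreg} of order $(1+\log n)/n$ and hence vanishing.
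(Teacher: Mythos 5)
Your proposal is correct, but for items (ii) and (iii) it follows a genuinely different route from the paper. The paper works directly with the explicit two-variable transition kernel $p_{\sqn\rho}(1,x,y)$ (Lemma~\ref{lemma_sticky_kernel}) combined with the space--time scaling identity of Lemma~\ref{lemma_time_scaling}, and then passes to the limit in the resulting integrals by dominated convergence, with Mills-ratio bounds on $e^{a}\erfc(\cdot)$ supplying both the limits $2\sqrt{2/\pi}$ and $1/\rho$ and the integrability needed for~\eqref{eq_connd_gn_aggreg}. You instead use the strong Markov property at $\tau_0$, the Brownian first-passage density $q_y(s)=|y|e^{-y^2/(2s)}/\sqrt{2\pi s^3}$ (valid because the sticky process is an unperturbed Brownian motion before hitting $0$), and Fubini, which collapses $m_{\sqn\rho}(k_n)$ and $m_{\sqn\rho}(h_n)$ to one-dimensional time integrals via $\int_\IR q_y(s)\,\rd y=\sqrt{2/(\pi s)}$; the limits then come from item~(i) together with the Beta integral $\int_0^1\sqrt{(1-u)/u}\,\rd u=\pi/2$, and I checked that the constants ($2\sqrt{2/\pi}$ and $1/\rho$) come out right. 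Your route needs the closed-form kernel only at $(0,0)$, i.e.\ the single function $u\mapsto\Prob_0(X_u=0)$, and replaces the dominated-convergence machinery by elementary Gaussian identities, which is arguably more probabilistic and transparent; the paper's route is more mechanical but treats all four terms of~\eqref{eq_connd_gn_aggreg} with one uniform template. Two small points to tighten when writing it up: the expansion $1-\Prob_0(X_u=0)=\tfrac{2\sqrt2}{\rho\sqrt\pi}\sqrt u+\Ord(u)$ must be used uniformly over $u\in(0,1/n]$ inside the convolution integrals (this is fine, e.g.\ via $1-e^{y^2}\erfc(y)\le 2y/\sqrt\pi$, but should be said); and the crude bound $|g_n|\le 1$ does not control the pointwise term $h_n^2(\sqn x)/n$ in~\eqref{eq_connd_gn_aggreg}, since it only gives $h_n(\sqn x)\le\sqn$ --- you need there the sharper estimate $g_n(y)\le (C/\sqn)\erfc(|y|\sqn/\sqrt2)$ that you derive anyway for the $L^2$ term, so this is an imprecision of phrasing rather than a gap.
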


\subsection{Crossings of type $0$}
\label{ssec_type0}

Here, we prove all results regarding number of Type-0 crossings.
Namely, Theorem~\ref{thm_main}\ref{item_strict} and Proposition~\ref{thm_main_B}\ref{item_strict_B}.
We first prove Proposition~\ref{thm_main_B}\ref{item_strict_B} relying on the sticky kernel asymptotics from Lemma~\ref{lem_transition_asymptotics} and the convergence result~Proposition~\ref{prop_SMA_1}.
Then, using Lemma~\ref{lem_Genon_Catalot} (conditional and unconditioned versions of the statistic have the same asymptotic) we prove Theorem~\ref{thm_main}\ref{item_strict} by reducing to Proposition~\ref{thm_main_B}\ref{item_strict_B}. 

\begin{proof}
[Proof of Proposition~\ref{thm_main_B}\ref{item_strict_B}]
We begin by expressing the statistic $\Cond{0}{n,t}(X)$ in terms of the functions $(g_n)$ and $(h_n)$ from Lemma~\ref{lem_transition_asymptotics}. 
Indeed, by the Markov property it holds that
\begin{equation}
	\Cond{0}{n,t}(X) = \sum_{i=1}^{[nt]} \Esp_x \bigbraces{\indicb{\hfprocess{X}{}{i-1}
			\hfprocess{X}{}{i}< 0}\big| \bF_{\frac{i-1}{n}}}
	= \sum_{i=1}^{[nt]} g_n(\hfprocess{X}{}{i-1})
	= \frac{1}{\sqn} \sum_{i=1}^{[nt]} h_n(\sqn \hfprocess{X}{}{i-1}).
\end{equation}
By Proposition~\ref{prop_SMA_1} and Lemma~\ref{lem_transition_asymptotics}\ref{lem_hn_convergence}, we have
\begin{equation}
	\Cond{0}{n,t}(X) \convergence{\Prob_x} \frac{1}{\rho} \loct{X}{0}{t}
	= \frac{1}{\rho^{2}} \cloct{\mathcal O}{X}{0}{t},
\end{equation}
which completes the proof.
\end{proof}

\begin{proof}
[Proof of Theorem~\ref{thm_main}\ref{item_strict}]
We consider the families of $\sigma$-algebras $(\bF^{n}_{i})_{i,n} $
and random variables $(\chi^{n}_{i})_{i,n} $,
defined for all $i,n $ by 
\begin{equation}
	\begin{aligned}
		\bF^{n}_{i} &:= \bF_{i/n},
		&\text{and}& 
		&\chi^{n}_{i} &:= \frac{1}{\un} \indicb{
			\hfprocess{X}{}{i-1}\hfprocess{X}{}{i}< 0}
	\end{aligned}
\end{equation}
so that
\begin{equation}
	\begin{aligned}
		\frac{1}{\un}\cC{0}{n,t}(X) &= \sum_{i=1}^{[nt]} \chi^{n}_{i} ,   &\text{and}  &
		&\frac{1}{\un}\Cond{0}{n,t}(X) &= \sum_{i=1}^{[nt]} \Esp_x \left( \chi^{n}_{i} \big| \bF^{n}_{i-1} \right).
	\end{aligned}
\end{equation}
From Proposition~\ref{thm_main_B}\ref{item_strict_B}, 
\begin{equation}
	\sum_{i=1}^{[nt]} \Esp_x \left( \chi^{n}_{i} \big| \bF^{n}_{i-1} \right)
	= \frac{1}{\un} \Cond{0}{n,t}(X)
	\convergence{\Prob_x} 0
\end{equation}
and, since $(\chi^n_i)^2= \chi^n_i/\un$,
\begin{equation}
	\sum_{i=1}^{[nt]} \Esp_x \left( \left(\chi^{n}_{i}\right)^{2} \big| \bF^{n}_{i-1} \right)
	= \frac{1}{\un} \sum_{i=1}^{[nt]} \Esp_x \left( \chi^{n}_{i} \big| \bF^{n}_{i-1} \right)
	= \frac{1}{\un^{2}} \Cond{0}{n,t}(X)
	\convergence{\Prob_x} 0.
\end{equation}
Thus, from Lemma~\ref{lem_Genon_Catalot},
\begin{equation}
	\frac{1}{\un}\cC{0}{n,t}(X)
	\convergence{\Prob_x} 0.
\end{equation}
From Lemma~\ref{lem_ucp_convergence_condition}, the convergence is also uniform in time, in probability (ucp). 
This finishes the proof.	
\end{proof}

\subsection{Crossings of type $1$}
\label{ssec_type1}

Here, we prove all results regarding number of Type-1 crossings. Namely, Theorem~\ref{thm_main}\ref{item_large} and part of  Proposition~\ref{thm_main_B}\ref{item_large}.

The proof relies on the study of the limit behavior of the following quantity which 
according to our naming conventions is referred to as the conditional difference statistic:
\begin{equation}
\begin{aligned}
	\CondD{1}{n,t}(X) &:= \sum_{i=1}^{[nt]} \Esp_x \Bigbraces{\indicb{\hfprocess{X}{}{i-1}=0,\,\hfprocess{X}{}{i}\not= 0}
		+ \indicb{\hfprocess{X}{}{i-1}\not =0,\,\hfprocess{X}{}{i}= 0}	
		\big| \bF_{\frac{i-1}{n}}},
\end{aligned}
\end{equation}
which satisfies $\Cond{1}{n,t}(X) = \CondD{1}{n,t}(X) + \Cond{0}{n,t}(X) $, involving the Type-1 and Type-0 conditional statistics.  

We first establish the limit behavior of $\CondD{1}{n,t}(X)$ using Lemmas~\ref{thm_Altmeyer} (occupation time approximation) and~\ref{lem_transition_asymptotics} (sticky kernel asymptotics). 
The asymptotic of the Type-1 conditional crossing statistic $\Cond{1}{n,t}(X) $ follows from the decomposition 
$\Cond{1}{n,t}(X) = \CondD{1}{n,t}(X) + \Cond{0}{n,t}(X) $ and Proposition~\ref{thm_main_B}\ref{item_strict_B} (asymptotic of the respective Type-0 statistic).
Finally, by Lemma~\ref{lem_Genon_Catalot} we deduce Theorem~\ref{thm_main}\ref{item_large} from its conditional version.

\begin{lemma}
\label{lem_crossings_exp_1}
The following convergences hold: 
\begin{equation}
	\frac{1}{\sqn}\CondD{1}{n,t}(X)
	\convergence{\Prob_x}  4 \sqrt{\frac 2 \pi} \loct{X}{0}{t}.
\end{equation}
\end{lemma}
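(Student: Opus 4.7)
The plan is to split $\CondD{1}{n,t}(X)$ into its two pieces and reduce each piece to one of the already-established tools of Section~\ref{sec_preliminaries}. By the Markov property at time $(i-1)/n$, with $f_n(x) = \Prob_x(X_{1/n}=0)$ and $k_n$ as in Lemma~\ref{lem_transition_asymptotics}, I compute
\begin{equation}
\Esp_x\bigbraces{\indic{\hfprocess{X}{}{i-1}=0,\;\hfprocess{X}{}{i}\ne 0}\big|\bF_{\frac{i-1}{n}}} = \indic{\hfprocess{X}{}{i-1}=0}\bigbraces{1-f_n(0)},
\end{equation}
\begin{equation}
\Esp_x\bigbraces{\indic{\hfprocess{X}{}{i-1}\ne 0,\;\hfprocess{X}{}{i}=0}\big|\bF_{\frac{i-1}{n}}} = \indic{\hfprocess{X}{}{i-1}\ne 0}\,f_n(\hfprocess{X}{}{i-1}) = k_n(\sqn\,\hfprocess{X}{}{i-1}),
\end{equation}
so that
\begin{equation}
\frac{1}{\sqn}\CondD{1}{n,t}(X) = \bigbraces{\sqn(1-f_n(0))} \cdot \frac{1}{n}\sum_{i=1}^{[nt]}\indic{\hfprocess{X}{}{i-1}=0} + \frac{1}{\sqn}\sum_{i=1}^{[nt]}k_n(\sqn\,\hfprocess{X}{}{i-1}).
\end{equation}

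For the first summand I would combine Lemma~\ref{lem_transition_asymptotics}\ref{lem_fn0_convergence}, which gives $\sqn(1-f_n(0)) \to 2\sqrt{2}/(\rho\sqrt{\pi})$, with Lemma~\ref{thm_Altmeyer}, which gives $\frac{1}{n}\sum_{i=1}^{[nt]}\indic{\hfprocess{X}{}{i-1}=0} \convergence{\Prob_x} \rho\,\loct{X}{0}{t}$. Their product then converges in probability to $\frac{2\sqrt{2}}{\sqrt{\pi}}\loct{X}{0}{t} = 2\sqrt{2/\pi}\,\loct{X}{0}{t}$. For the second summand, Lemma~\ref{lem_transition_asymptotics}\ref{lem_kn_convergence} verifies exactly the hypotheses of Proposition~\ref{prop_SMA_1} for the sequence $(k_n)$ with limit $M = 2\sqrt{2/\pi}$, and the proposition yields convergence in probability to $2\sqrt{2/\pi}\,\loct{X}{0}{t}$ as well. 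Adding the two contributions produces the announced limit $4\sqrt{2/\pi}\,\loct{X}{0}{t}$.

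The hard analytic work has been absorbed into Lemma~\ref{lem_transition_asymptotics}; once the asymptotics of $f_n(0)$ and of $m_{\sqn\rho}(k_n)$ are in place, no further obstacle remains. The factor $4$ admits a transparent origin as $2+2$, arising symmetrically from the intervals that start at $0$ (handled by the occupation-time approximation of Lemma~\ref{thm_Altmeyer}) and those that end at $0$ (handled by a test-function statistic on $\IR\setminus\{0\}$ via Proposition~\ref{prop_SMA_1}).
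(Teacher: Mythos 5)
Your proposal is correct and coincides with the paper's own argument: the same splitting of $\CondD{1}{n,t}(X)$ into the two conditional pieces, the same identification of the first piece as $\sqn(1-f_n(0))$ times the occupation-time statistic handled by Lemma~\ref{thm_Altmeyer} together with Lemma~\ref{lem_transition_asymptotics}\ref{lem_fn0_convergence}, and the same treatment of the second piece as $\frac{1}{\sqn}\sum_i k_n(\sqn \hfprocess{X}{}{i-1})$ via Proposition~\ref{prop_SMA_1} and Lemma~\ref{lem_transition_asymptotics}\ref{lem_kn_convergence}. The bookkeeping of constants (the cancellation of $\rho$ in the first term and the limit $m_{\sqn\rho}(k_n)\to 2\sqrt{2/\pi}$ in the second) matches the paper exactly.
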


\begin{proof}
Regarding the first term of $\CondD{1}{}$, by Lemmas~\ref{thm_Altmeyer} and~\ref{lem_transition_asymptotics}\ref{lem_fn0_convergence} we have that
\begin{multline}
	\label{eq_proof_Z1_eq1}
	\frac{1}{\sqrt{n}}\sum_{i=1}^{[nt]} \Esp_x \bigsqbraces{\indicb{\hfprocess{X}{}{i-1}=0;\;
			\hfprocess{X}{}{i}\not = 0} \big| \bF_{\frac{i-1}{n}}}
	= \frac{1}{\sqn} \Prob_{0} \left( \hfprocess{X}{}{1}\not = 0 \right) \sum_{i=1}^{[nt]} \indicb{\hfprocess{X}{}{i-1}=0}
	\\= \sqn \left(1 - f_n( 0)\right) \frac{1}{n} \sum_{i=1}^{[nt]} \indicb{\hfprocess{X}{}{i-1}=0}
	\convergence{\Prob_x} 
	\frac{ 2 \sqrt{2}}{\rho \sqrt \pi}  \cloct{\mathcal O}{X}{0}{t}
	= 2 \sqrt{\frac 2 \pi} \loct{X}{0}{t}.
\end{multline}

Regarding the second term of $\CondD{1}{}$, by the Markov property we have that	\begin{equation}
	\begin{aligned}
		\frac{1}{\sqrt{n}}\sum_{i=1}^{[nt]} \Esp_x \bigsqbraces{\indicb{\hfprocess{X}{}{i-1}\not =0;\;
				\hfprocess{X}{}{i} = 0} \big| \bF_{\frac{i-1}{n}}}
		&= 
		\frac{1}{\sqrt{n}}\sum_{i=1}^{[nt]} \indicb{\hfprocess{X}{}{i-1}\not =0} \Prob_{\hfprocess{X}{}{i-1}} 
		\left( \hfprocess{X}{}{1} = 0 \right)
		\\ &= \frac{1}{\sqrt{n}}\sum_{i=1}^{[nt]} k_n(\sqn \hfprocess{X}{}{i-1}),
	\end{aligned}
\end{equation}
where $(k_n)_n $ is the sequence of functions defined in Lemma~\ref{lem_transition_asymptotics}.
From Proposition~\ref{prop_SMA_1} and Lemma~\ref{lem_transition_asymptotics}\ref{lem_kn_convergence}, 
\begin{equation}
	\label{eq_proof_Z1_eq2}
	\frac{1}{\sqrt{n}}\sum_{i=1}^{[nt]} k_n(\sqn \hfprocess{X}{}{i-1})
	\convergence{\Prob_x}
	\left( \lim_{n\ra \infty} m_{\sqn \rho} (k_n) \right) \loct{X}{0}{t}
	=  2 \sqrt{\frac{2}{\pi}} \loct{X}{0}{t}.
\end{equation}
Combining the relations~\eqref{eq_proof_Z1_eq1} and~\eqref{eq_proof_Z1_eq2}
yields the desired result. 
\end{proof}

\begin{proof}
[Proof of Proposition~\ref{thm_main_B}\ref{item_large_B} for $j=1 $]
Essentially, it suffices to observe that
\begin{equation}
	\frac{1}{\sqn} \Cond{1}{n,t}(X) = 
	\frac{1}{\sqn}\CondD{1}{n,t}(X)
	+ \frac{1}{\sqn} \Cond{0}{n,t}(X).
\end{equation}
Then, by Lemma~\ref{lem_crossings_exp_1} and Proposition~\ref{thm_main_B}\ref{item_strict_B},
\begin{equation}
	\frac{1}{\sqn} \Cond{1}{n,t}(X) \convergence{\Prob_x}
	4 \sqrt{\frac{2}{\pi}} \loct{X}{0}{t}.
\end{equation}
By Lemma~\ref{lem_ucp_convergence_condition}, the convergence is also uniform in time, in probability (ucp).
\end{proof}

\begin{proof}
[Proof of Theorem~\ref{thm_main}\ref{item_large}]
The proofs works by first proving convergence of the Type-1 zero statistic $\cD{1}{\cdot,t}  $ by reduction to its conditional version $\CondD{1}{\cdot,t} $. 
We then conclude from previous result by re-writting $\cC{1}{\cdot,t} $ as $\cD{1}{\cdot,t} + \cC{0}{\cdot,t} $.

For the reduction argument, we consider the families of $\sigma$-algebras $(\bF^{n}_{i})_{i,n} $
and random variables $(\chi^{n}_{i})_{i,n} $,
defined for all $i\in \IN, n >0$ by 
\begin{equation}
	\begin{aligned}
		\bF^{n}_{i} &:= \bF_{i/n},
		& \text{and}&
		&\chi^{n}_{i} &:= \frac{1}{\sqn} \Bigbraces{ \indicb{\hfprocess{X}{}{i-1}\hfprocess{X}{}{i}= 0}
			- \indicb{\hfprocess{X}{}{i-1}=\hfprocess{X}{}{i} = 0}}, 
	\end{aligned}
\end{equation}
so that
\begin{equation}
	\begin{aligned}
		\frac{1}{\sqn}\cD{1}{n,t}(X)&=
		\sum_{i=1}^{[nt]}\chi^{n}_{i} 
		& \text{and} &
		& \frac{1}{\sqn}\CondD{1}{n,t}(X)&= \sum_{i=1}^{[nt]}\Esp_x \left( \chi^{n}_{i} \big| \bF^{n}_{i-1} \right).
	\end{aligned}
\end{equation}

From Lemma~\ref{lem_crossings_exp_1}, 
\begin{equation}
	\sum_{i=1}^{[nt]} \Esp_x \left( \chi^{n}_{i} \big| \bF^{n}_{i-1} \right)
	\convergence{\Prob_x} 4 \sqrt{\frac{2}{\pi}} \loct{X}{0}{t}
\end{equation}
and, since $(\chi^n_i)^2= \chi^n_i/\sqn$,
\begin{equation}
	\sum_{i=1}^{[nt]} \Esp_x \left( \left(\chi^{n}_{i}\right)^{2} \big| \bF^{n}_{i-1} \right)
	\convergence{\Prob_x} 0.
\end{equation}
By Lemma~\ref{lem_Genon_Catalot} we therefore have that
\begin{equation}
	\frac{1}{\sqn}\cD{1}{n,t}(X) \convergence{\Prob_x} 
	4 \sqrt{\frac{2}{\pi}} \loct{X}{0}{t}.
\end{equation}
Combining this with Theorem~\ref{thm_main}\ref{item_strict}
ensures that 
\begin{equation}
	\frac{1}{\sqn}\cC{1}{n,t}(X) 
	= \frac{1}{\sqn}\cD{1}{n,t}(X) + \frac{1}{\sqn}\cC{0}{n,t}(X)
	\convergence{\Prob_x}  4 \sqrt{\frac{2}{\pi}} \loct{X}{0}{t}.
\end{equation}
From Lemma~\ref{lem_ucp_convergence_condition}, the convergence is also uniform in time, in probability (ucp). 
This finishes the proof.	
\end{proof}

\subsection{Crossings of type $2$}
\label{ssec_type2}

Here, we prove all results regarding the number of Type-2 crossings, completing the proofs of Theorem~\ref{thm_main} and Proposition~\ref{thm_main_B}. The proofs consist of combining results on Type-1 crossings with the occupation time approximation (Lemma~\ref{thm_Altmeyer}).

Specifically, we show that both the Type-2 crossing statistic and its conditional version are asymptotically equivalent to the occupation time approximation from  Lemma~\ref{thm_Altmeyer}.

\begin{proof}
[Proof of Theorem~\ref{thm_main}\ref{item_stays}]
Recall the decomposition 
\begin{equation}
	\cC{2}{n,t}(X) =  \cC{1}{n,t}(X) + \sum_{i=1}^{[nt]} \indicb{\hfprocess{X}{}{i-1} =0, \hfprocess{X}{}{i} = 0},
\end{equation}	
where we can rewrite the second term as
\begin{equation}
	\indicb{\hfprocess{X}{}{i-1} =0, \hfprocess{X}{}{i} = 0} 
	= \indicb{\hfprocess{X}{}{i-1} =0} -\indicb{\hfprocess{X}{}{i-1} =0, \hfprocess{X}{}{i}\neq 0}.
\end{equation}
Since $ \{\hfprocess{X}{}{i-1} =0, \hfprocess{X}{}{i}\neq 0\}$ corresponds to a particular case of type 1 crossing, we have 
$0 \le \cC{1}{n,t}(X) - \sum_{i=1}^{[nt]} \indicb{\hfprocess{X}{}{i-1} =0, \hfprocess{X}{}{i}\neq 0} \le \cC{1}{n,t}(X) $.
This yields the bound
\begin{equation}
	\frac1n \sum_{i=1}^{[nt]} \indicb{\hfprocess{X}{}{i-1}=0}
	\,\le\, 
	\frac{1}{n}\cC{2}{n,t}(X)
	\,\le\,  
	\frac1n \sum_{i=1}^{[nt]} \indicb{\hfprocess{X}{}{i-1}=0}
	+ \frac1n \cC{1}{n,t}(X)
\end{equation}
By Item~\ref{item_large} and Lemma~\ref{thm_Altmeyer}, both bounds converge to $\rho \loct{X}{0}{t}$. Therefore, by the squeeze theorem:
\begin{equation}
	\forall t\ge 0:
	\qquad
	\frac{1}{n}\cC{2}{n,t}(X) \convergence{\Prob_x} 
	\int_{0}^{t} \indicb{X_s = 0} \vd s
	= \rho \loct{X}{0}{t}.
\end{equation}
Lemma~\ref{lem_ucp_convergence_condition} ensures ucp~convergence, which completes the proof. 
\end{proof}

\begin{proof}
[Proof of Proposition~\ref{thm_main_B}\ref{item_large_B} for $j=2 $] 
Observe that
\begin{equation}
	\begin{aligned}
		\frac{1}{n}\Cond{2}{n,t}(X) 
		&= 
		\frac{1}{n} \sum_{i=1}^{[nt]} \Esp_x \Bigbraces{\indicb{\hfprocess{X}{}{i-1}=\hfprocess{X}{}{i}= 0} \big| \bF_{\frac{i-1}{n}}} 
		+ \frac{1}{n}\Cond{1}{n,t}(X).
	\end{aligned}
\end{equation}
With the same argument as in the proof of Theorem~\ref{thm_main}\ref{item_stays}, 
\begin{equation}
	\frac{1}{n}\sum_{i=1}^{[nt]} \indicb{\hfprocess{X}{}{i-1} = 0} 
	\le
	\frac{1}{n}\Cond{2}{n,t}(X) \le 
	\frac{1}{n}\sum_{i=1}^{[nt]} \indicb{\hfprocess{X}{}{i-1} = 0} 
	+ \frac{1}{n}\Cond{1}{n,t}(X)
\end{equation}
From Item~\ref{item_large} for $j=1 $ and Lemma~\ref{thm_Altmeyer}, we have that
\begin{equation}
	\forall t\ge 0:
	\qquad
	\frac{1}{n}\Cond{2}{n,t}(X) \convergence{\Prob_x} 
	\int_{0}^{t} \indicb{X_s = 0} \vd s
	= \rho \loct{X}{0}{t}.
\end{equation}
By Lemma~\ref{lem_ucp_convergence_condition}, the convergence is ucp. 
This finishes the proof.	
\end{proof}

\subsection{Proof of Theorem~\ref{thm_main_R0}}
\label{sec_bouncings}

The proof relies essentially in linking bouncing and crossing behaviors of type 0 via a reflection principle at $0$ for the sticky Brownian motion.
Then, analogous to the results for Type-1 and Type-2 crossings, the leading terms in the number of Type-1 and Type-2 bouncings ($\cB{1}{} $, $\cB{2}{} $) are given by the corresponding difference statistics ($\cD{1}{} $, $\cD{2}{} $).

For the statement and the proof of the reflection principle, let $\tau^{Y}_{0} $ to be the hitting time of $0$ by the process $Y$, defined as
\begin{equation}
\label{eq_txt_hitting_times}
\tau^{Y}_{0} = \inf \{t>0:\; Y_t = 0\}.
\end{equation}
We are now ready to state this intermediary result. The proof is deferred to Appendix~\ref{app_reflection_sticky}.

\begin{lemma}[Reflection principle for sticky Brownian motion]
\label{lem_sticky_refl_principle}
Let $X$ be the sticky Brownian motion of stickiness parameter $\rho>0$ defined on the filtered probability space $ \mathcal P_x = (\Omega, \bF, \process{\bF_t},\Prob_x) $ such that $\Prob_x$-\as, $X_0 = x$ (in particular, $X$ is $\process{\mathcal F_t}$-adapted). 
Let $X' $ be the process defined for all $t\ge 0 $ by
\begin{equation}
	X'_t = 
	\begin{dcases}
		X_t & t< \tau^{X}_{0},\\
		-X_t & t\ge \tau^{X}_{0}.\\
	\end{dcases}
\end{equation}
The process $X'$ defined this way is a sticky Brownian motion of stickiness parameter $\rho$.
\end{lemma}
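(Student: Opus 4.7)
The plan is to use the time-change characterizations~\ref{item_sticky_P1}--\ref{item_sticky_P1_rev} of sticky Brownian motion and to reduce the claim to the classical reflection principle for standard Brownian motion. If $x=0$, then $\tau^{X}_{0}=0$ so that $X'=-X$, and the symmetry of the system~\eqref{eq_txt_sticky_pathwise} (replace $(X,W)$ by $(-X,-W)$) already shows that $-X$ is a sticky Brownian motion of stickiness $\rho$ starting from $0$. I therefore focus on the case $x\neq 0$.

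Let $Z$ be the Brownian motion produced by~\ref{item_sticky_P1}, so that $X=\process{Z_{\gamma(t)}}$ where $\gamma$ is the right-inverse of $A(t)=t+\rho\,\loct{Z}{0}{t}$, and set $\tau:=\tau^{Z}_{0}$. Since $\loct{Z}{0}{s}=0$ for $s<\tau$, the map $A$ is the identity on $[0,\tau]$, and hence $\gamma(t)=t$ for $t\le\tau$; in particular $X_\tau=Z_\tau=0$ and $\tau^{X}_{0}=\tau$. I then apply the classical Brownian reflection principle to $Z$ at the $\process{\bF_t}$-stopping time $\tau$: the process defined by $Z'_t:=Z_t$ for $t\le\tau$ and $Z'_t:=-Z_t$ for $t>\tau$ is, on an extension of $\mathcal P_x$, a Brownian motion starting from $x$.

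The main technical point is to verify that $\loct{Z'}{0}{t}=\loct{Z}{0}{t}$ for every $t\ge 0$. This is immediate for $t\le\tau$, both quantities being zero. For $t>\tau$ it follows from Tanaka's formula applied to both $Z$ and $Z'$: since $|Z'|=|Z|$ pointwise, the identity reduces to the equality of the stochastic integrals $\int_{0}^{t}\sgn(Z'_s)\vd Z'_s$ and $\int_{0}^{t}\sgn(Z_s)\vd Z_s$, and further to the vanishing on $[\tau,t]$ of a stochastic integral supported on $\{s:Z_s=0\}$, which is a set of zero Lebesgue measure. Consequently the process $A'(t):=t+\rho\,\loct{Z'}{0}{t}$ coincides with $A(t)$, so its right-inverse $\gamma'$ coincides with $\gamma$.

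By~\ref{item_sticky_P1_rev}, the process $\process{Z'_{\gamma(t)}}$ is a sticky Brownian motion of stickiness $\rho$ starting from $x$. Using that $\gamma(t)=t$ for $t\le\tau$ and that $\gamma(t)>\tau$ for $t>\tau^{X}_{0}=\tau$ (which holds because $A$ is continuous and strictly increasing), one checks pointwise that $Z'_{\gamma(t)}=X_t$ when $t\le\tau^{X}_{0}$ and $Z'_{\gamma(t)}=-Z_{\gamma(t)}=-X_t$ otherwise, which is exactly $X'_t$. The main obstacle in this plan is therefore the invariance $\loct{Z'}{0}{t}=\loct{Z}{0}{t}$ highlighted above; as an alternative, one could invoke directly the reflection-invariance of the Brownian local time at its starting point.
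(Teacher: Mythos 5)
Your proof is correct and follows essentially the same route as the paper's: represent $X=Z_{\gamma(\cdot)}$ via the time-change characterization~\ref{item_sticky_P1}, reflect the driving Brownian motion $Z$ at $\tau^{Z}_0$ using the classical reflection principle, check that $\loct{Z'}{0}{}=\loct{Z}{0}{}$ so that the additive functional $A$ and its inverse $\gamma$ are unchanged, and conclude with~\ref{item_sticky_P1_rev} after identifying $X'=Z'_{\gamma(\cdot)}$. The only differences are cosmetic: you justify the local-time identity with a Tanaka-formula argument (the paper simply asserts it), and your separate treatment of $x=0$ is unnecessary since the main argument covers that case with $\tau=0$.
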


We are now ready to address the proof of Theorem~\ref{thm_main_R0}.

\begin{proof}[Proof of Theorem~\ref{thm_main_R0}\ref{item_strict_R0}]
In this proof, let $ \CondB{0}{}$ be the conditional version of $ \cB{0}{}$, defined for all $n,t >0$ by 
\begin{equation}
	\CondB{0}{n,t}(X) = 
	\sum_{i=1}^{[nt]} 
	\Esp_x \Bigbraces{\indic{U^{n}_{i}(X)} \indicb{\hfprocess{X}{}{i-1}\hfprocess{X}{}{i}>0}
		\big| \bF_{\frac{i-1}{n}}}.
\end{equation}
We also introduce the notation of Markovian families used in~\cite[Section~2.1]{Fre}.
Let $X'$ be a diffusion on $J$, an interval of $\IR $,
defined on the probability space $\mathcal P_x$ such that 
$\Prob_x $-\as, $X'_0 = x $. 
We denote by $(Q^{X'}_y,\; y\in J )$ the canonical diffusion  
on the path-space $C([0,\infty),\IR)$ and $Y $ be the coordinate process. 
Then, for every almost surely finite stopping time $\tau$, we have
\begin{equation}
	\label{eq_Markovian_family_formalism}
	Q^{X'}_{X'_{\tau}} \bigbraces{ h(Y_t) } = \Esp_x \bigbraces{ h(X'_{\tau+t}) \big| \bF_\tau }.
\end{equation}
From the strong-Markov property, using the above notation and~\eqref{eq_txt_hitting_times}, we have 
\begin{align}
	\Esp_x \Bigbraces{\indic{U^{n}_{i}(X)} \indicb{\hfprocess{X}{}{i-1}\hfprocess{X}{}{i}>0}
		\big| \bF_{\frac{i-1}{n}}}
	={} & Q^{X}_{\hfprocess{X}{}{i-1}} \Bigbraces{\indicb{\tau^{Y}_0<
			\frac 1 n}
		\indicb{Y_0 \hfprocess{Y}{}{1}>0}}
	\\
	={} &
	\indicb{\hfprocess{X}{}{i-1}>0} Q^{X}_{\hfprocess{X}{}{i-1}} \Bigbraces{\indicb{\tau^{Y}_0<
			\frac 1 n}
		\indicb{\hfprocess{Y}{}{1}>0}}
	\\ &+
	\indicb{\hfprocess{X}{}{i-1}<0} Q^{X}_{\hfprocess{X}{}{i-1}} \Bigbraces{\indicb{\tau^{Y}_0< \frac 1 n}
		\indicb{\hfprocess{Y}{}{1}<0}},
\end{align}
and 
\begin{equation}
	\begin{aligned}
		& \Esp_x \biggbraces{ \indicb{\hfprocess{X}{}{i-1}\hfprocess{X}{}{i}<0}
			\Big| \bF_{\frac{i-1}{n}}}
		= Q^{X}_{\hfprocess{X}{}{i-1}} \Bigbraces{
			\indicb{Y_0 \hfprocess{Y}{}{1}<0}}
		\\
		& \qquad \qquad = \indicb{\hfprocess{X}{}{i-1}<0}  Q^{X}_{\hfprocess{X}{}{i-1}} \Bigbraces{
			\indicb{\hfprocess{Y}{}{1}>0}}
		+ \indicb{\hfprocess{X}{}{i-1}>0}  Q^{X}_{\hfprocess{X}{}{i-1}} \Bigbraces{
			\indicb{\hfprocess{Y}{}{1}<0}}.
	\end{aligned}
\end{equation}		
From Lemma~\ref{lem_sticky_refl_principle}, we have
\begin{equation}
	\begin{aligned}
		\indicb{\hfprocess{X}{}{i-1}>0} Q^{X}_{\hfprocess{X}{}{i-1}} \Bigbraces{\indicb{\tau^{Y}_0<
				\frac 1 n}
			\indicb{\hfprocess{Y}{}{1}>0}} 
		&= \indicb{\hfprocess{X}{}{i-1}>0} Q^{X}_{\hfprocess{X}{}{i-1}} \Bigbraces{\indicb{\tau^{Y}_0<
				\frac 1 n}
			\indicb{\hfprocess{Y}{}{1}<0}}
		\\&=  \indicb{\hfprocess{X}{}{i-1}>0} Q^{X}_{\hfprocess{X}{}{i-1}} \Bigbraces{\indicb{\hfprocess{Y}{}{1}<0}}
	\end{aligned}
\end{equation}
and
\begin{equation}
	\begin{aligned}
		\indicb{\hfprocess{X}{}{i-1}<0} Q^{X}_{\hfprocess{X}{}{i-1}} \Bigbraces{\indicb{\tau^{Y}_0<
				\frac 1 n}
			\indicb{\hfprocess{Y}{}{1}<0}} 
		&= \indicb{\hfprocess{X}{}{i-1}<0} Q^{X}_{\hfprocess{X}{}{i-1}} \Bigbraces{\indicb{\tau^{Y}_0<
				\frac 1 n}
			\indicb{\hfprocess{Y}{}{1}>0}}
		\\&=  \indicb{\hfprocess{X}{}{i-1}<0} Q^{X}_{\hfprocess{X}{}{i-1}} \Bigbraces{\indicb{\hfprocess{Y}{}{1}>0}}.
	\end{aligned}
\end{equation}
Therefore, we obtain
\begin{equation}
	\Esp_x \Bigbraces{\indic{U^{n}_{i}(X)} \indicb{\hfprocess{X}{}{i-1}\hfprocess{X}{}{i}>0}
		\big| \bF_{\frac{i-1}{n}}}
	= \Esp_x \biggbraces{ \indicb{\hfprocess{X}{}{i-1}\hfprocess{X}{}{i}<0}
		\Big| \bF_{\frac{i-1}{n}}}
\end{equation}	
and thus
\begin{equation}
	\label{eq_proof_B0_T0_relation}
	\CondB{0}{n,t} = \Cond{0}{n,t}. 
\end{equation}
Let us consider the families of $\sigma$-algebras $(\bF^{n}_{i})_{i,n} $
and random variables $(\chi^{n}_{i})_{i,n} $,
defined for all $i,n $ by 
\begin{equation}
	\begin{aligned}
		\bF^{n}_{i} &= \bF_{i/n}
		&\quad \text{and}\quad&
		&\chi^{n}_{i} &= \frac{1}{\un} \indic{U^{n}_{i}(X)} \indicb{
			\hfprocess{X}{}{i-1}\hfprocess{X}{}{i}> 0}.
	\end{aligned}			
\end{equation}
Then, 
\begin{equation}
	\begin{aligned}
		\frac{1}{\un}\cB{0}{n,t}(X) &= \sum_{i=1}^{[nt]} \chi^{n}_{i} &
		&\quad \text{and}\quad & 
		\frac{1}{\un}\CondB{0}{n,t}(X) &= \sum_{i=1}^{[nt]} \Esp_x \left( \chi^{n}_{i} \big| \bF^{n}_{i-1} \right).
	\end{aligned}			
\end{equation}
From Proposition~\ref{thm_main_B}\ref{item_strict_B} and \eqref{eq_proof_B0_T0_relation}, 
\begin{equation}
	\sum_{i=1}^{[nt]} \Esp_x \left( \chi^{n}_{i} \big| \bF^{n}_{i-1} \right)
	= \frac{1}{\un} \CondB{0}{n,t}(X) = \frac{1}{\un} \Cond{0}{n,t}(X)
	\convergence{\Prob_x} 0
\end{equation}
and, since $(\chi^{n}_i)^{2}=\chi^{n}_i/\un $,
\begin{align}
	\sum_{i=1}^{[nt]} \Esp_x \left( \left(\chi^{n}_{i}\right)^{2} \big| \bF^{n}_{i-1} \right)
	&= \frac{1}{\un} \sum_{i=1}^{[nt]} \Esp_x \left( \chi^{n}_{i} \big| \bF^{n}_{i-1} \right)
	\\ &= \frac{1}{\un^{2}} \CondB{0}{n,t}(X) = \frac{1}{\un^{2}} \Cond{0}{n,t}(X)
	\convergence{\Prob_x} 0.
\end{align}
Thus, from Lemma~\ref{lem_Genon_Catalot},
\begin{equation}
	\frac{1}{\un}\cB{0}{n,t}(X) = \sum_{i=1}^{[nt]} \chi^{n}_{i} 
	\convergence{\Prob_x} 0.
\end{equation}
From Lemma~\ref{lem_ucp_convergence_condition}, the convergence is also uniform in time, in probability (ucp). 
This finishes the proof of~Item~\ref{item_strict_R0}.	
\end{proof}

\begin{proof}[Proof of Theorem~\ref{thm_main_R0}\ref{item_large_R0}]
We observe that, for $j=1,2$,
$	\cD{j}{n,\cdot}(X) = \cB{j}{n,\cdot}(X) - \cB{j-1}{n,\cdot}(X).$ 
Thus, from Theorem~\ref{thm_main}, 
and Item~\ref{item_strict_R0}, it holds that
\begin{equation}
	\begin{aligned}
		\frac{1}{\sqn}\cB{1}{n,\cdot}(X) &\convergence{\Prob_x}
		4 \sqrt{\frac{2}{\pi}} \loct{X}{0}{}
		& \text{and}&
		& \frac{1}{n}\cB{2}{n,\cdot}(X) &\convergence{\Prob_x}
		\rho \loct{X}{0}{t}.
	\end{aligned}
\end{equation}
From Lemma~\ref{lem_ucp_convergence_condition}, the convergence is uniform in time, in $\Prob_x$-probability ($\Prob_x$-ucp).
This completes the proof.
\end{proof}

\subsection{Proof of Proposition~\ref{prop_estimation}} \label{sec:proof_prop_estim}

\begin{proof}
From Theorem~\ref{thm_main}, $\cC{2}{n,t}(X) \convergence{\Prob_x} \rho \loct{X}{0}{t}  $
and $\cC{1}{n,t}(X) \convergence{\Prob_x}  4 \sqrt{\frac 2 \pi} \loct{X}{0}{t}  $.
This and the fact that $\loct{X}{0}{t}=\loct{X}{0}{t} \indicb{\loct{X}{0}{t}>0}$ ensure that
$\cC{2}{n,t}(X) \indicb{\loct{X}{0}{t}>0} \convergence{\Prob_x} \rho \loct{X}{0}{t} $
and $\cC{1}{n,t}(X) \indicb{\loct{X}{0}{t}>0}\convergence{\Prob_x}  4 \sqrt{\frac 2 \pi} \loct{X}{0}{t}  $, 
and 
\begin{equation}
	\indicb{\loct{X}{0}{t}>0} \varrho_n(X)
	= \indicb{\loct{X}{0}{t}>0} \left( 4 \sqrt{\frac 2 \pi } \right) \frac{1}{\sqn}\frac{\cC{2}{n,t}(X)}{\cC{1}{n,t}(X)}
	\convergence{\Prob_x} \rho \indicb{\loct{X}{0}{t}>0} .
\end{equation}
Note that $\Prob_x$-\as, $ \{\loct{X}{0}{t}>0\} = \mathcal H_t $~(this follows, e.g., from~\cite[Corollary~29.18]{Kal}) 
and for every $\varepsilon >0$
\begin{equation} 
	\Prob_x(\mathcal H_t \cap \{|\varrho_n -\rho|>\varepsilon\} \}) 
	=   
	\Prob_x(\{\loct{X}{0}{t}>0\} \cap \{|\varrho_n -\rho|>\varepsilon\} \})
	= 
	\Prob_x(|\varrho_n -\rho| \indicb{\loct{X}{0}{t}>0}>\varepsilon)
\end{equation}
which converges to 0 as $n\to \infty$, as we just showed.
Since $\Prob_x(\mathcal H_t)>0 $, from Bayes rule,
we prove the desired convergence.
\end{proof}

\section{Extensions}
\label{sec_SID}

In this section we extend the results on the sticky Brownian motion
to the sticky-reflected Brownian motion and to sticky Itô diffusions, i.e., processes
that solve a homogeneous SDE away from a sticky point.

\subsection{Bouncings of the sticky-reflected Brownian motion} \label{ssec_stickyrefl}

The sticky-reflected Brownian motion, also known as \textit{slowly reflected} Brownian motion, was
first discovered by Feller in his attempt to describe all possible ways to define a Brownian motion
on the positive semi-axis $[0,\infty) $,  see~\cite{Fel52}.
In particular, he discovered that 
the operator $\Lop = \D^{2}_x $ with domain
\begin{equation}
\dom(\Lop) = \left\{ f \in C_0([0,\infty))\cap C^{2}((0,\infty)):\; f'(0+)= \bigbraces{\rho/2} f''(0+) \right\},
\end{equation}
with $\rho>0$,
is the infinitesimal generator of a diffusion that spends a positive amount of time at $0$ and that behaves like a Brownian motion away from $0$.
Replacing the boundary condition $f'(0+)= \bigbraces{\rho/2} f''(0+)$ by the lateral condition $ f(0+)-f(0-) = \bigbraces{\rho/2} f''(0+) $ 
defines the sticky Brownian motion, or two-sided sticky Brownian motion, introduced in Section~\ref{ssec_stickyBM}.
A nice historical overview can be found in~\cite{Pesk2014}.

For simplicity, we define the sticky-reflected Brownian motion of stickiness parameter $\rho>0 $ as the unique (in law) 
weak solution to the system 
\begin{equation}
\label{eq_SDE_X}
\begin{dcases}
	\vd X_t = \indicb{X_t > 0} \vd B_t + \frac{1}{2} \vd \loct{X}{0}{t},\\
	\indicb{X_t = 0} \vd t = \frac{\rho}{2} \vd \loct{X}{0}{t},
\end{dcases}
\end{equation}
where $B$ is a standard Brownian motion (see~\cite[Theorem~5]{EngPes}). 

Let us note that the absolute value of a sticky Brownian motion is a sticky-reflected Brownian motion with the same stickiness parameter. More precisely, if $Y $ is a sticky Brownian motion of stickiness parameter $\rho $,
from the Tanaka formula~\cite[theorem~VI.1.2]{RevYor} and the SDE characterization~\ref{item_sticky_P2} for sticky Brownian motion, there exists a Brownian motion $B$ on an extension of the probability space such that  
\begin{equation}
\label{eq_proof_Y_SDE_part}
\begin{aligned}
	\vd |Y_t| &= \sgn(Y_t) \vd Y_t + \vd \loct{Y}{0}{t}\\
	&= \sgn(Y_t) \indicb{|Y_t|\not = 0} \vd B_t + \vd \loct{Y}{0}{t}.
\end{aligned}
\end{equation}
This, along with~\ref{item_sticky_P2}, yield that $\vd \qv{Y}_t = \vd \qv{|Y|}_t  = \indicb{Y_t \not = 0} \vd t$ and that, since $Y$ is a martingale, from~\cite[Corollary~VI.1.9]{RevYor},
\begin{equation}
\label{eq_proof_Y_loct_part}
\begin{aligned}
	\loct{|Y|}{0}{t} &= \lim_{\epsilon \ra 0} \frac{1}{\epsilon}
	\int_{0}^{t} \indicb{0<|Y_s|<\epsilon} \vd s \\
	&= \lim_{\epsilon \ra 0} \frac{1}{\epsilon}
	\int_{0}^{t} \indicb{0<Y_s<\epsilon} \vd s 
	+ \lim_{\epsilon \ra 0} \frac{1}{\epsilon}
	\int_{0}^{t} \indicb{0<-Y_s<\epsilon} \vd s 
	\\&= 2 \loct{Y}{0}{t} = 2\rho \int_{0}^{t}\indicb{Y_s = 0} \vd s.
\end{aligned}
\end{equation}
From~\eqref{eq_proof_Y_SDE_part} and~\eqref{eq_proof_Y_loct_part}, if $B'=\int_{0}^{\cdot} \sgn(Y_s) \vd B_s $, the pair $(|Y|, B') $
solves~\eqref{eq_SDE_X}.

We are now ready to state our main results regarding the number of bouncings of this process.
The proof reduces to the case of non-reflected sticky Brownian motion.

\begin{theorem}
\label{thm_main_R}
Let $X$ be the sticky-reflected Brownian motion of stickiness parameter $\rho>0 $, defined on the filtered probability space $(\Omega,\bF,\process{\bF_t},\Prob_x) $ such that $\Prob_x$-\as: $X_0 = x \geq 0$ (in particular, $X$ is $\process{\bF_t}$-adapted). 
Let also $\loct{X}{0}{} $ be the right local time at $0$ of $X$ 
and $\cB{0}{},\cB{1}{},\cB{2}{},\cD{1}{}, \cD{2}{} $ be the quantities defined in Section~\ref{sec_intro}
for the process $X$.
Then, the following convergences holds, 
\begin{enumerate}
	[label={ (\roman*)}]
	\item \label{item_bounce0_R} for every diverging sequence $\un$, it holds that:  
	$ \cB{0}{n,\cdot}(X)/\un  \convergence{\Prob_x\text{-}\ucp} 0 $, 
	\item $ \cB{1}{n,\cdot}(X)/\sqn \convergence{\Prob_x\text{-}\ucp}
	2 \sqrt{\frac{2}{\pi}} \loct{X}{0}{} $, \label{item_bounce1_R}
	\item $ \cB{2}{n,\cdot}(X)/n \convergence{\Prob_x\text{-}\ucp} \frac{\rho}{2} \loct{X}{0}{} $, \label{item_bounce2_R}
	\item $ \cD{j}{n,\cdot}(X)/n^{j/2}$ has the same limit as $ \cB{j}{n,\cdot}(X)/n^{j/2}$ for $j=1,2$,
	\label{rmk_main_Rwt}
\end{enumerate}
where as per~\eqref{eq_SDE_X}, $\frac{\rho}{2} \loct{X}{0}{t}= \int_{0}^{t} \indicb{X_s = 0} \vd s $, for all $t\ge 0 $.
\end{theorem}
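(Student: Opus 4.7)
The plan is to leverage the identification $X \stackrel{d}{=} |Y|$ where $Y$ is a sticky Brownian motion of stickiness parameter $\rho$, established in the paragraph preceding the theorem via~\eqref{eq_proof_Y_SDE_part}--\eqref{eq_proof_Y_loct_part}. By weak uniqueness for the system~\eqref{eq_SDE_X} it suffices to prove the theorem for $X = |Y|$. Two consequences of this identification are crucial: $\{s \geq 0 : X_s = 0\} = \{s \geq 0 : Y_s = 0\}$ pointwise, so that $U^n_i(X) = U^n_i(Y)$, and $\loct{X}{0}{t} = 2 \loct{Y}{0}{t}$, which is the content of~\eqref{eq_proof_Y_loct_part}.

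The first step is to express each bouncing statistic of $X$ as a statistic of $Y$ plus a correction already controlled by previous results. Since $X \geq 0$ the product $\hfprocess{X}{}{i-1} \hfprocess{X}{}{i}$ is always non-negative, and since a strict sign-change of $Y$ forces $|Y|$ to touch $0$ by continuity, a direct inspection of indicators yields
\begin{equation}
\cB{j}{n,t}(X) = \cB{j}{n,t}(Y) + \cC{0}{n,t}(Y), \qquad j=0,1,2,
\end{equation}
and, using only that $X_s = 0 \iff Y_s = 0$, also $\cD{j}{n,t}(X) = \cD{j}{n,t}(Y)$ for $j=1,2$.

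The second step passes to the limit using the previously established results for $Y$. For item~(i), dividing by a diverging $\un$ kills both terms on the right by Theorem~\ref{thm_main_R0}\ref{item_strict_R0} and Theorem~\ref{thm_main}\ref{item_strict}. For items~(ii) and~(iii), renormalizing by $\sqn$ or $n$ still makes $\cC{0}{n,t}(Y)$ vanish, so the limits are inherited from those of $\cB{1}{n,t}(Y)/\sqn$ and $\cB{2}{n,t}(Y)/n$ furnished by Theorem~\ref{thm_main_R0}\ref{item_large_R0}, namely $4 \sqrt{2/\pi} \loct{Y}{0}{}$ and $\rho \loct{Y}{0}{}$; substituting $\loct{Y}{0}{} = \loct{X}{0}{}/2$ gives the announced $2 \sqrt{2/\pi} \loct{X}{0}{}$ and $(\rho/2) \loct{X}{0}{}$. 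For item~(iv), combining $\cD{j}{n,t}(X) = \cD{j}{n,t}(Y)$ with the limits of $\cD{j}{n,\cdot}(Y)/n^{j/2}$ given in Theorem~\ref{thm_main}\ref{item_large}--\ref{item_stays} produces exactly the same limits as in items~(ii)--(iii).

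The main obstacle is really just the combinatorial identity above: one must check carefully that for each $j$ the indicator bookkeeping correctly handles the boundary cases in which one endpoint is zero and in which a sign-change may or may not occur. The delicate point is $j=1$, where the correction $\indic{\hfprocess{X}{}{i-1} = \hfprocess{X}{}{i} = 0}$ built into the definition of $\cB{1}{}$ for $X$ matches the analogous correction for $Y$ precisely because being zero is invariant under $Y \mapsto |Y|$. Once these identities are secured, the convergences (including their ucp versions, via Lemma~\ref{lem_ucp_convergence_condition}) are immediate from the previously established results for the sticky Brownian motion.
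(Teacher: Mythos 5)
Your route is essentially the paper's: identify $X$ in law with $|Y|$ for a sticky Brownian motion $Y$ of the same parameter, reduce the bouncing statistics of $|Y|$ to statistics of $Y$, invoke Theorems~\ref{thm_main} and~\ref{thm_main_R0}, and convert $\loct{Y}{0}{}$ into $\tfrac12\loct{X}{0}{}$. Your combinatorial identities are correct: since a strict sign change of $Y$ forces a zero by continuity, the $U^{n}_{i}$ indicator is automatic on $\{\hfprocess{Y}{}{i-1}\hfprocess{Y}{}{i}<0\}$, and one checks $\cB{j}{n,t}(|Y|)=\cB{j}{n,t}(Y)+\cC{0}{n,t}(Y)$ for all $j\in\{0,1,2\}$ and $\cD{j}{n,t}(|Y|)=\cD{j}{n,t}(Y)$; this is equivalent to (and slightly cleaner than) the decomposition the paper uses.

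The one step you gloss over is the reduction ``by weak uniqueness it suffices to prove the theorem for $X=|Y|$''. Weak uniqueness gives equality in law only, and a convergence in probability whose limit is a random variable (here $\loct{X}{0}{}$, defined through the semimartingale decomposition on the given space, while $|Y|$ lives on a possibly different space) is not in general a law-determined statement, so it does not automatically transfer across equality in law. It does transfer here, but this needs an argument, and it is exactly the point to which the paper devotes the second half of its proof. Two ways to close it: (a) observe that both the statistics and the limit are a.s.\ measurable functionals of the path, since $\tfrac{\rho}{2}\loct{X}{0}{t}=\int_{0}^{t}\indic{X_s=0}\vd s$ and likewise $\loct{|Y|}{0}{t}=\tfrac{2}{\rho}\int_{0}^{t}\indic{|Y|_s=0}\vd s$, so the ucp statement is a property of the push-forward law alone; or (b) follow the paper: multiply by $\indic{\loct{|Y|}{0}{t}>0}$ and divide by $\loct{|Y|}{0}{t}$ so the limits become constants, use that convergence in probability to a constant is equivalent to convergence in law, transfer via $(X,\loct{X}{0}{})\overset{d}{=}(|Y|,\loct{|Y|}{0}{})$, and recombine the events $\{\loct{X}{0}{t}>0\}$ and $\{\loct{X}{0}{t}=0\}$. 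With that point made explicit, your argument is complete and coincides with the paper's proof.
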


\begin{proof}[Proof of Theorem~\ref{thm_main_R}]
Let $Y$ be a sticky Brownian motion of stickiness parameter $\rho $ defined on the probability space $\bP'_x = (\Omega',\bF',\process{\bF'_t},\Prob'_x) $ such that 
$\Prob'_x$-\as, $Y_0 = x $ and $Y$ is $\process{\bF'_t}$-adapted. 
We note that $X$ and $|Y|$ are equal in law (see discussion before the statement) and that
\begin{equation}
	\label{eq_proof_refl_BCZ_relations}
	\begin{aligned}
		\cB{0}{n,\cdot}(|Y|) = \cB{0}{n,\cdot}(Y)+\cC{0}{n,\cdot}(Y),\\ 
		\cB{1}{n,\cdot}(|Y|) = \cD{j}{n,\cdot}(Y) + \cB{0}{n,\cdot}(|Y|),\\
		\cB{2}{n,\cdot}(|Y|) = \cD{j}{n,\cdot}(Y) + \cB{0}{n,\cdot}(|Y|),
	\end{aligned}
\end{equation}
From Theorems~\ref{thm_main} and~\ref{thm_main_R0}, for every diverging sequence $(\un)_n $,
\begin{equation}
	\begin{aligned}
		\cB{0}{n,t}(|Y|) &\convergence{\Prob'_x} 0,
		& \frac{1}{\sqn} \cB{1}{n,t}(|Y|) &\convergence{\Prob'_x} 4 \sqrt{\frac{2}{\pi}} \loct{Y}{0}{t},
		& \frac{1}{n} \cB{2}{n,t}(|Y|) &\convergence{\Prob'_x} \rho \loct{Y}{0}{t}.
	\end{aligned}
\end{equation} 
This and the fact that $\loct{Y}{0}{t}= \loct{Y}{0}{t} \indicb{\loct{Y}{0}{t}>0}$ and $\loct{|Y|}{0}{t}=2\loct{Y}{0}{t} $ imply the following convergences 
\begin{align}
	&\cB{0}{n,t}(|Y|) \convergence{\Prob'_x} 0,
	\quad \frac{\indicb{\loct{|Y|}{0}{t}>0}}{\sqn} \frac{\cB{1}{n,t}(|Y|)}{\loct{|Y|}{0}{t}} \convergence{\Prob'_x} 2 \sqrt{\frac{2}{\pi}},
	\\ &\frac{\indicb{\loct{|Y|}{0}{t}>0}}{n} \frac{\cB{2}{n,t}(|Y|)}{\loct{|Y|}{0}{t}} \convergence{\Prob'_x} \frac{\rho}{2} 
\end{align} 
and 
\begin{equation}
	\frac{\indicb{\loct{|Y|}{0}{t} = 0}}{\sqn} \cB{1}{n,t}(|Y|) \convergence{\Prob'_x} 0, \quad
	\frac{\indicb{\loct{|Y|}{0}{t} =0}}{n} \cB{2}{n,t}(|Y|) \convergence{\Prob'_x} 0.
\end{equation}
It is known that, in the case of constant limits, convergence in probability is equivalent to convergence in law (see~\cite[Theorem~25.2]{Billingsley1968_probability} and the discussion thereafter).
And since $|Y|= X $ in law (which entails that $(|Y|,\loct{|Y|}{0}{})=(X,\loct{X}{0}{}) $ in law), we the latter convergences hold with $X$ instead of $|Y|$ and $\Prob_x$ instead of $\Prob'_x$.  
Therefore, using again that ${\loct{X}{0}{t}} = {\loct{X}{0}{t}} \indicb{\loct{X}{0}{t}>0}$, we obtain 
\begin{align}
	&\cB{0}{n,t}(X) \convergence{\Prob_x} 0, \quad
	\frac{\indicb{\loct{X}{0}{t}>0}}{\sqn} \cB{1}{n,t}(X)  \convergence{\Prob_x} 2 \sqrt{\frac{2}{\pi}} {\loct{X}{0}{t}},\\
	& \frac{\indicb{\loct{X}{0}{t}>0}}{n} \cB{2}{n,t}(X) \convergence{\Prob_x} \frac{\rho}{2} {\loct{X}{0}{t}},
\end{align} 
and 
\begin{equation}
	\frac{\indicb{\loct{X}{0}{t}=0}}{\sqn} \cB{1}{n,t}(X)  \convergence{\Prob_x} 0,
	\quad \frac{\indicb{\loct{X}{0}{t}=0}}{n} \cB{2}{n,t}(X) 
	\convergence{\Prob_x} 0.
\end{equation} 
Thus, we have
\begin{equation}
	\label{eq_proof_refl_BCZ_last_eq}
	\begin{aligned}
		\cB{0}{n,t}(X) &\convergence{\Prob_x} 0,
		& \frac{1}{\sqn} \cB{1}{n,t}(X) &\convergence{\Prob_x} 2 \sqrt{\frac{2}{\pi}} \loct{X}{0}{t},
		& \frac{1}{n} \cB{2}{n,t}(X) &\convergence{\Prob_x} \frac{\rho}{2} \loct{X}{0}{t}.
	\end{aligned}
\end{equation} 
From Lemma~\ref{lem_ucp_convergence_condition}, the convergences are uniform in time, in $\Prob_x$-probability ($\Prob_x$-ucp).
This proves~\ref{item_bounce0_R}, \ref{item_bounce1_R}, \ref{item_bounce2_R}.
From~\eqref{eq_proof_refl_BCZ_relations} and~\eqref{eq_proof_refl_BCZ_last_eq}, we  infer~\ref{rmk_main_Rwt}. 
This completes the proof. 
\end{proof}

\begin{corollary}
We consider the setting of Theorem~\ref{thm_main_R} and $\mathcal H_t $ be the event $\{\tau^{X}_0<t\} = \{\exists\; s \in [0,t) \colon  X_s = 0\}$.
The statistic 
\begin{equation}
	\begin{aligned}
		\varrho_n(X) &= \left(4  \sqrt{\frac 2 \pi} \right) \frac{1}{\sqn}\frac{\cD{2}{n,\cdot}(X)}{ \cD{1}{n,\cdot}(X)}
	\end{aligned}
\end{equation}
is a consistent estimators of $\rho $, conditionally on the event $\mathcal H_t $, i.e., $\varrho_n(X) \lra \rho $, in $\Prob^{\mathcal H_t}_x $-probability, as $n\lra \infty $.
\end{corollary}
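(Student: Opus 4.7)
The plan is to adapt almost verbatim the argument used for Proposition~\ref{prop_estimation}. First, I would rewrite the estimator as
\begin{equation}
\varrho_n(X) = 4\sqrt{\tfrac{2}{\pi}} \cdot \frac{\cD{2}{n,t}(X)/n}{\cD{1}{n,t}(X)/\sqn},
\end{equation}
so that its asymptotic behaviour is controlled by the ratio of the renormalised difference statistics. Next, I would apply parts \ref{item_bounce1_R}, \ref{item_bounce2_R}, and \ref{rmk_main_Rwt} of Theorem~\ref{thm_main_R} to obtain
\begin{equation}
\frac{\cD{1}{n,t}(X)}{\sqn} \convergence{\Prob_x} 2\sqrt{\tfrac{2}{\pi}} \loct{X}{0}{t},
\qquad
\frac{\cD{2}{n,t}(X)}{n} \convergence{\Prob_x} \frac{\rho}{2} \loct{X}{0}{t}.
\end{equation}
Both limits are proportional to the same local time $\loct{X}{0}{t}$, so the formal ratio cancels this common factor and the prefactor $4\sqrt{2/\pi}$ absorbs the numerical constants, producing $\rho$ as the candidate limit.

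The only genuine obstacle is the passage from convergence on the full probability space to conditional convergence on $\mc H_t$: on $\mc H_t^{c}$ the process never touches $0$, so the denominator $\cD{1}{n,t}(X)$ is identically zero and the ratio is ill-defined. To bypass this, I would multiply both convergences above by $\indic{\loct{X}{0}{t}>0}$ and apply the continuous mapping theorem to $(u,v)\mapsto u/v$, which is continuous on $\{v>0\}$, to deduce
\begin{equation}
\indic{\loct{X}{0}{t}>0}\, \varrho_n(X) \convergence{\Prob_x} \rho\, \indic{\loct{X}{0}{t}>0}.
\end{equation}

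Finally, mirroring the last lines of the proof of Proposition~\ref{prop_estimation}, I would use the $\Prob_x$-a.s.~identity $\mc H_t = \{\loct{X}{0}{t}>0\}$ (which, as cited there, follows from \cite[Corollary~29.18]{Kal}) together with Bayes's rule and the hypothesis $\Prob_x(\mc H_t)>0$ to conclude that $\varrho_n(X) \to \rho$ in $\Prob^{\mc H_t}_x$-probability. This concluding step is routine once the previous display has been established; I expect the slightly delicate point to be the continuous mapping step, which is however standard once one works on the event $\{\loct{X}{0}{t}>0\}$ where the limiting denominator is strictly positive.
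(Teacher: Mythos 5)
Your proposal is correct and follows essentially the same route as the paper, whose proof is simply the one-line instruction to repeat the argument of Proposition~\ref{prop_estimation} with $(\cD{1}{},\cD{2}{})$ in place of $(\cC{1}{},\cC{2}{})$ and Theorem~\ref{thm_main_R}\ref{rmk_main_Rwt} in place of Theorem~\ref{thm_main}. Your limits $\cD{1}{n,t}(X)/\sqn \to 2\sqrt{2/\pi}\,\loct{X}{0}{t}$ and $\cD{2}{n,t}(X)/n \to (\rho/2)\,\loct{X}{0}{t}$, the cancellation giving the constant $\rho$, the restriction to $\{\loct{X}{0}{t}>0\}$, and the concluding use of $\mc H_t=\{\loct{X}{0}{t}>0\}$ a.s.\ together with $\Prob_x(\mc H_t)>0$ and Bayes's rule match the paper's intended argument.
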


\begin{proof}
Similar to the proof of Proposition~\ref{prop_estimation}, using
$ \cD{1}{},\cD{2}{} $ instead of $\cC{1}{},\cC{2}{} $
and Theorem~\ref{thm_main_R}\ref{rmk_main_Rwt} instead of Theorem~\ref{thm_main}.
\end{proof}

\subsection{Crossings and bouncings of sticky Itô diffusions}

In this section we generalize Theorem~\ref{thm_main} and Theorem~\ref{thm_main_R0} to sticky Itô diffusions.
The generalization procedure is analogue to the ones in, e.g.,  \cite{Anagnostakis2022,AnagnostakisM2023,Aza89,Jac98,Maz19}.

We consider the system 
\begin{equation}
\label{eq_stickySDE}
\begin{dcases}
	\vd X_t = \indicb{X_t \not = 0}\left(\mu_t \vd t + \sigma(X_t) \vd \W{t}\right),\\
	\indicb{X_t = 0} \vd t = \rho \vd \loct{X}{0}{t},
\end{dcases}
\end{equation}
with $\W{}$ a standard Brownian motion, defined on the probability space $\mathcal P_x = (\Omega, \bF, \process{\bF_t},\Prob_x) $ such that $\Prob_x$-\as,
$X_0=x$. 
We suppose $(\mu,\sigma) $ satisfy the following conditions.  
\begin{enumerate}
[label={\upshape (c\arabic*)}] 

\item \label{item_SID_regularity}
The diffusion coefficient $\sigma$ is strictly positive and $C^{1} $.

\item \label{item_SID_weak} Weak existence and uniqueness in law holds for
the system
\begin{equation}
	\label{eq_transform}
	\begin{dcases}
		\vd X'_t = \sigma(X'_t) \indicb{X'_t \ne 0} \vd W_t, \\
		\indicb{X'_t = 0} \vd t = \rho \vd \loct{X'}{0}{t},
	\end{dcases}
\end{equation} 
on $\mathcal P_x$, such that $\Prob_x$-\as, $X'_0=x$. 

\item \label{item_SID_girsanov} The laws of the solutions $X$, $X'$ of~\eqref{eq_stickySDE}, \eqref{eq_transform} with the same initial condition ($X_0 = X'_0 = x $ a.s.)
are locally equivalent in the sense of~\cite[Definition~III.3.2]{jacod2003limit}. 
Basically, this means 
\begin{equation}
	\begin{aligned}
		\forall t &\ge 0,
		& \text{\it Law}(X|_{[0,t]}) & \sim \text{\it Law}(X'|_{[0,t]}).
	\end{aligned}
\end{equation}
Note that the processes are not necessarily defined on the same probability space.

\item \label{item_SID_statespace} The state-space of $X' $ that solves~\eqref{eq_transform} is an open interval $J$ of $\IR $, i.e., $X'$ has only inaccessible boundaries
(see, e.g.,~\cite[Section~33]{Kal}). 

\end{enumerate}

\begin{remark}
Condition~\ref{item_SID_weak} is equivalent to the weak existence and uniqueness for the classical SDE
\begin{equation}
	\label{eq_transform2}
	\vd X''_t = \sigma(X''_t) \vd W_t.
\end{equation}
This is a consequence of, e.g.,~\cite[Theorem~4.1]{Anagnostakis2022} and~\cite[Proposition~4.2]{Anagnostakis2022}.
\end{remark}

Under conditions~\ref{item_SID_regularity}--\ref{item_SID_statespace}, the following results hold. 

\begin{theorem}
\label{thm_main_SID}
Let $\cC{0}{},\cC{1}{},\cC{2}{},\cB{0}{},\cB{1}{},\cB{2}{},
\cD{1}{},\cD{2}{} $ be the quantities defined in 
Section~\ref{sec_intro} for the process $X$.
Then, 
\begin{enumerate}
	[label={ (\roman*)}]
	\item \label{item_strictX} for every diverging sequence $(\un)_n$, 
	\begin{equation}
		\bigbraces{\cB{0}{n,\cdot}(X)/\un}, \bigbraces{\cC{0}{n,\cdot}(X)/\un}  \convergence{\Prob_x\text{-}\ucp} 0,
	\end{equation} 
	\item \label{item_strictX2} $ \bigbraces{\cB{1}{n,\cdot}(X) /\sqn}, \bigbraces{\cC{1}{n,\cdot}(X) /\sqn}, \bigbraces{\cD{1}{n,\cdot}(X) /\sqn} \convergence{\Prob_x\text{-}\ucp}
	\frac{4}{\sigma(0)} \sqrt{\frac 2 \pi} \loct{X}{0}{} $, \label{item_largeX}
	\item \label{item_strictX3} $ \bigbraces{\cB{2}{n,\cdot}(X) /n}, \bigbraces{\cC{2}{n,\cdot}(X) /n},
	\bigbraces{\cD{2}{n,\cdot}(X) /n}
	\convergence{\Prob_x\text{-}\ucp} \rho \loct{X}{0}{} $, \label{item_staysX}
\end{enumerate}
where as per~\eqref{eq_stickySDE}, $\rho \loct{X}{0}{t}= \int_{0}^{t} \indicb{X_s = 0} \vd s $, for all $t\ge 0 $.
\end{theorem}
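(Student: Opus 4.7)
The plan is to reduce Theorem~\ref{thm_main_SID} to Theorems~\ref{thm_main} and~\ref{thm_main_R0} by a locally absolutely continuous change of measure combined with a monotone $C^2$ change of variable. The key enabling observation is that $\cC{j}{n,\cdot},\cB{j}{n,\cdot},\cD{j}{n,\cdot}$ depend only on the signs and zero-set of the high-frequency observations of $X$, so they are invariant under any strictly increasing transformation fixing $0$; moreover, $\loct{X}{0}{}$ is invariant under locally equivalent measure changes, since it is obtained from the quadratic variation, a pathwise functional.

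By condition~\ref{item_SID_girsanov}, the law of $X$ is locally equivalent to that of the driftless process $X'$ solving~\eqref{eq_transform}. For fixed $t\ge 0$, convergence in probability transfers between equivalent measures, and since our statistics are nondecreasing in $t$ with continuous limit candidates, Lemma~\ref{lem_ucp_convergence_condition} reduces the problem to the case $\mu\equiv 0$, i.e.\ proving Theorem~\ref{thm_main_SID} for $X'$.

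Next I consider the $C^2$ diffeomorphism $\phi\colon J\to \phi(J)$ defined by $\phi(y):=\int_0^y \vd z/\sigma(z)$, well defined by~\ref{item_SID_regularity}--\ref{item_SID_statespace}, with $\phi(0)=0$ and $\phi'(0)=1/\sigma(0)$. Strict monotonicity of $\phi$ with $\phi(0)=0$ ensures $\cC{j}{n,\cdot}(X')=\cC{j}{n,\cdot}(\phi(X'))$, and analogously for $\cB{j}{}$ and $\cD{j}{}$. It\^o's formula gives
\begin{equation}
\vd \phi(X'_t) = \indic{X'_t\neq 0}\vd W_t - \tfrac12 \sigma'(X'_t)\indic{X'_t\neq 0}\vd t,
\end{equation}
so a second locally absolutely continuous Girsanov transformation removes the drift and produces a measure $\widetilde \Prob_x$ under which $Y:=\phi(X')$ is a continuous martingale with $\qv{Y}_t=\int_0^t\indic{Y_s\neq 0}\vd s$. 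The change-of-variable formula for local times yields $\loct{Y}{0}{t}=\phi'(0)\loct{X'}{0}{t}=(1/\sigma(0))\loct{X'}{0}{t}$, so the stickiness relation $\indic{X'_t=0}\vd t=\rho\,\vd\loct{X'}{0}{t}$ rewrites as $\indic{Y_t=0}\vd t=\rho\sigma(0)\,\vd\loct{Y}{0}{t}$. By~\ref{item_sticky_P2_rev}, $Y$ is a sticky Brownian motion of stickiness parameter $\rho\sigma(0)$ under $\widetilde \Prob_x$.

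Applying Theorems~\ref{thm_main} and~\ref{thm_main_R0} to $Y$ and translating via invariance of the statistics under $\phi$ together with the relation $\loct{Y}{0}{}=(1/\sigma(0))\loct{X'}{0}{}$ gives, e.g., $\cC{1}{n,\cdot}(X')/\sqn\to(4/\sigma(0))\sqrt{2/\pi}\,\loct{X'}{0}{}$ and $\cC{2}{n,\cdot}(X')/n\to\rho\,\loct{X'}{0}{}$ in $\widetilde\Prob_x$-\ucpB{}, and similarly for the remaining statistics; transferring back to $\Prob_x$ via local equivalence completes the proof. The main technical obstacle is verifying that the second Girsanov density is a true martingale on bounded intervals: condition~\ref{item_SID_regularity} only provides $\sigma\in C^1$, so $\sigma'$ need not be globally bounded on $J$. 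I would handle this by localizing at the exit times $\tau_k:=\inf\{s\ge 0\colon X'_s\notin[-k,k]\cap J\}$, which diverge to $\infty$ almost surely by the inaccessible-boundary assumption~\ref{item_SID_statespace}; the change of measure is then well defined up to $\tau_k$, the desired convergences hold on each event $\{\tau_k>t\}$, and the full statement follows by letting $k\to\infty$.
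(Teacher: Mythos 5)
Your proposal follows essentially the same route as the paper: reduce to the driftless equation via the local equivalence in~\ref{item_SID_girsanov}, apply a Girsanov change of measure together with the scale map $S(y)=\int_0^y\sigma(u)^{-1}\vd u$ so that $S(X')$ becomes a sticky Brownian motion of parameter $\rho\sigma(0)$, use invariance of the crossing/bouncing statistics under a strictly increasing map fixing $0$ plus the scaling $\loct{X'}{0}{}=\sigma(0)\loct{S(X')}{0}{}$, invoke Theorems~\ref{thm_main} and~\ref{thm_main_R0}, and finally localize using the inaccessible-boundary condition~\ref{item_SID_statespace}. The only (easily fixable) imprecision is the choice of localizing sets $[-k,k]\cap J$: if $J$ has a finite inaccessible endpoint, $\sigma'$ need not be bounded there and the localized Girsanov density need not be a true martingale, which is why the paper first works under global bounds on $\sigma,1/\sigma,\sigma'$ (forcing $J=\IR$) and then localizes with compacts $K_m$ exhausting $J$.
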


In the proof we reduce ourselves to the case of sticky Brownian motion.  
By the local equivalence in law~\ref{item_SID_girsanov}, it suffices to prove the result for equation~\eqref{eq_transform}. 
We then further reduce to the case of sticky Brownian motion by Girsanov lemma and Lamperti transform. 

\begin{proof}[Proof of Theorem~\ref{thm_main_SID}]
Let us first assume that the diffusion coefficient and its derivative satisfy the following conditions: there exists a real constant $\delta >0 $ such that 
\begin{equation}
	\label{eq_temporary_condition_sigma}
	\begin{aligned}
		\forall x&\in \IR: 
		&\delta &\le \sigma(x)\le 1/\delta, 
		&\bigabsbraces{\sigma'(x)} &\le 1/\delta .
	\end{aligned}
\end{equation}
We note that under this condition, the state-space $J$ is $\IR $. 
We consider $X' $ to be the process, solution to~\eqref{eq_transform}, defined on $\mathcal P_x$ and $\process{\bF_t}$ to be the natural filtration generated by $X'$. 
By the Definition~\ref{def:ucp} of ucp convergence, we can fix $T>0$ and consider that all processes are restricted to the time interval $[0,T]$. 

Let $\mathcal{E}(\theta) $, $\theta $ be the processes defined by 	\begin{equation}\label{Girsanov_RN_derivative}
	\begin{aligned}
		\mathcal{E}_t(\theta)  &= 	\exp \Bigbraces{\int_{0}^{t} \theta_s \vd W_s - \frac{1}{2} \int_{0}^{t} \theta^2_s \vd s},
		&\theta_t &= \frac12 \sigma'(X_t), 
		& t&\ge 0.
	\end{aligned}
\end{equation}
Since $\sigma' $ is bounded, 
from the Girsanov theorem~\cite[Lemma~4.4]{Anagnostakis2022}, there exists a measure $\Qrob_x$ such that the process $(X'_t, \Wth{t})_{t\in [0,T]} $ jointly solves a drifted version of~\eqref{eq_transform}
\begin{equation}
	\label{eq_proof_SID_X}
	\begin{dcases}
		\vd X'_t	= \frac{1}{2}\sigma(X'_t)\sigma'(X'_t) \indicb{X'_t \ne 0} \vd t + \sigma(X'_t) \indicb{X'_t \ne 0} \vd \Wth{t}, \\
		\indicb{X'_t = 0}	\vd t = \rho \vd \loct{X'}{0}{t},
	\end{dcases}		
\end{equation}		
where $(\Wth{t})_{t\in [0,T]}$ is the $\Qrob_x$-Brownian motion defined by $W^{\theta}=  \W{} - \int_{0}^{\cdot} \theta_s \vd s$.
Moreover the Radon-Nikodym derivative 
$\vd  \Qrob_x/ \vd \Prob_x |_{\bF_T} = \mathcal{E}_T(\theta)$.  

Let $S$ be the function defined by
\begin{equation}
	\label{eq_S_sigma_def}
	\begin{aligned}
		S(y) &=
		\int_{0}^{y} \frac{1 }{\sigma(u)} \vd u,
		& y& \in \mathbb R. 
	\end{aligned}
\end{equation}
The function $S $ is strictly increasing, $S(0)=0 $,
$S \in C^{2}(\IR) $, and 
\begin{equation}
	\begin{aligned}
		S'(y) &= \bigbraces{\sigma(y)}^{-1},
		&
		S''(y) &= - \sigma'(y) \bigbraces{\sigma(y)}^{-2},
		& y&\in \mathbb R. 
	\end{aligned}
\end{equation}
Let $\Xp{} := S(X') $, then from It\^o--Tanaka formula (e.g.,~\cite[Lemma~4.5]{Anagnostakis2022}),~\eqref{eq_proof_SID_X} and the fact that $S(0)=0 $, it holds that $(\Xp{}, \Wth{})$ solves
\begin{equation}\label{eq_proof_sX_solution}
	\begin{dcases}
		\vd \Xp{t} = \frac{1}{\sigma(X'_t)} \vd X'_t - \frac{1}{2}  \frac{\sigma'(X'_t)}{\sigma^2(X'_t)} \vd \qv{X'}_t
		=  \indicb{\Xp{t} \ne 0}  \vd \Wth{t} ,  \\
		\indicb{\Xp{t} = 0}	\vd t = \rho \vd \loct{X'}{0}{t} =  
		\bigbraces{\rho \sigma(0)} \vd \loct{Y'}{0}{t}.
	\end{dcases}		
\end{equation}		
From characterization~\ref{item_sticky_P2_rev}, the process $Y'$ is a sticky Brownian motion under $\Qrob_x $ of stickiness parameter $\rho \sigma(0) $ and 
Theorems~\ref{thm_main}, \ref{thm_main_R0} do apply to $\Xp{} $ for the probability $\Qrob_x$.
In particular, it holds that
\begin{enumerate}
	[label={ (\roman*)}]
	\item $\bigbraces{\cB{0}{n,\cdot}(\Xp{})/\un}, \bigbraces{\cC{0}{n,\cdot}(\Xp{})/\un} \convergence{\Qrob_x \text{-}\ucp} 0 $, 
	\item $\bigbraces{\cB{1}{n,\cdot}(\Xp{}) /\sqn},
	\bigbraces{\cC{1}{n,\cdot}(\Xp{}) /\sqn}, \bigbraces{\cD{1}{n,\cdot}(\Xp{}) /\sqn} \convergence{\Qrob_x \text{-}\ucp} 4 \sqrt{\frac 2 \pi} \loct{\Xp{}}{0}{} $, 
	\item $\bigbraces{\cB{2}{n,\cdot}(\Xp{}) /n},
	\bigbraces{\cC{2}{n,\cdot}(\Xp{}) /n}, 
	\bigbraces{\cD{2}{n,\cdot}(\Xp{}) /n} \convergence{\Qrob_x \text{-}\ucp} \rho \sigma(0) \loct{\Xp{}}{0}{} $. 
\end{enumerate}
Since $S$ is strictly increasing and $S(0)=0 $, then,
\begin{enumerate}
	[label={ (\roman*)}]
	\item for all $n,t $ and $j\in \{0,1,2\} $, $\cC{j}{n,t}(\Xp{}) = \cC{j}{n,t}(X') $, $\cB{j}{n,t}(\Xp{}) = \cB{j}{n,t}(X') $,
	\item for all $n,t $ and $j\in \{1,2\} $, 
	$\cD{j}{n,t}(\Xp{}) = \cD{j}{n,t}(X') $,
	\item for all $i,n $, $U^{n}_{i}(\Xp{})=U^{n}_{i}(X') $, 
\end{enumerate}
and $\Qrob_x $-\as, $\loct{X'}{0}{t} = \sigma(0)\loct{\Xp{}}{0}{t}$. 
From equivalence of measure $\Prob_x \sim \Qrob_x$ when restricted to $\bF_T$, and the fact that
$\mathcal E(\theta)_T $ is an exponential martingale (even square integrable), we deduce that the previous convergences hold also
under $\Prob_x $. The proof is thus completed for~\eqref{eq_transform} under the boundedness assumptions on the diffusion coefficient and its derivative. 

Let us consider a standard localization argument to go from bounded to unbounded  $\sigma,1/\sigma,\sigma' $, 
by considering $X'^{\tau'_m} = \process{X'_{t\wedge \tau'_m}} $.
More precisely, let $\process{\bF_t}$ be the filtration generated by $X'$,
let $(K_m)_m $ be an increasing sequence of compacts of $J$ such that $\bigcup_m K_m = J $, and let $(\tau'_m)_m $ be the sequence of stopping times defined by 
$\tau'_m = \inf\{t\ge 0:\; X'_t \not\in K_m \}$ and $X'^{\tau'_m} $ be the stopped process, defined by $X'^{\tau'_m} = \process{X'_{t\wedge \tau'_m}} $.
Condition~\ref{item_SID_statespace} ensures that 
\begin{equation}
	\label{eq_convergence_ST}
	\lim_{m\ra \infty} \Prob_x(t > \tau'_m)
	=
	0
\end{equation}
for all $t\geq 0$. 
For a 
process $Z$ and all $t\geq 0$, let $(f_n(Z,t))_n $ be a sequence of statistics such that $f_n(Z,t) $ is $\bF_t $-measurable. 
These quantities represent the difference between the statistics of interest and their limits.
We also observe that
\begin{equation}
	\begin{aligned}
		\Prob_x (|f_n(X',t)|>\varepsilon) 
		&\le \Prob_x(|f_n(X',t)|>\varepsilon;\; t\le \tau'_m) 
		+ \Prob_x(t > \tau'_m)
		\\
		&= \Prob_x(|f_n(X',t \wedge \tau'_m)|>\varepsilon;\; t\le \tau'_m) 
		+ \Prob_x(t > \tau'_m)
	\end{aligned}
\end{equation}
We use the previous part to pass to the limit as $n\to \infty$. 
Next, we let $m \lra \infty $.
From~\eqref{eq_convergence_ST}, this proves the result for $X' $.

From~\ref{item_SID_girsanov}, these convergences also hold in $\Prob_x$-probability for the solution
to~\eqref{eq_stickySDE}.

From Lemma~\ref{lem_ucp_convergence_condition}, the convergence is also uniform in time, in $\Prob_x$-probability ($\Prob_x$-ucp).
This completes the proof. 
\end{proof}

\begin{corollary}
\label{cor_estimation_SID}
Let $\mathcal P_x =(\Omega, \bF, \process{\bF_t}, \Prob_x)$ be a filtered probability space and let $X$ be an $\process{\bF_t}$-adapted process solution to \eqref{eq_stickySDE} on $\mathcal P_x$. 
The statistic defined by 
\begin{equation}
	\begin{aligned}
		\varrho_n(X) &= \left(\frac{ 4}{\sigma(0)} \sqrt{\frac 2 \pi} \right) \frac{1}{\sqn}\frac{\cC{2}{n,t}(X)}{ \cC{1}{n,t}(X)},
		& n&\in \IN,
	\end{aligned}
\end{equation}
is a consistent estimators of $\rho $, conditionally on the event $\mathcal H_t =\{\tau^{X}_0 < t\}= \{\exists\; s \in [0,t) \colon  X_s = 0\}$.
This means that (see~Definition~\ref{def_conditional_convergence_in_probability}),  $\varrho_n(X) \lra \rho $, in $\Prob^{\mathcal H_t}_x $-probability, as $n\lra \infty $.
\end{corollary}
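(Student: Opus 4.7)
The plan is to follow the blueprint of Proposition~\ref{prop_estimation} verbatim, with Theorem~\ref{thm_main_SID} in the role of Theorem~\ref{thm_main}, so the required adjustment is only in the scaling constant in front of $\cC{1}{n,t}(X)/\sqn$. The key point is that Theorem~\ref{thm_main_SID}\ref{item_strictX2} gives
\begin{equation}
\frac{1}{\sqn}\cC{1}{n,t}(X) \convergence{\Prob_x} \frac{4}{\sigma(0)} \sqrt{\frac 2 \pi}\, \loct{X}{0}{t},
\end{equation}
while Theorem~\ref{thm_main_SID}\ref{item_strictX3} gives $\cC{2}{n,t}(X)/n \convergence{\Prob_x} \rho \loct{X}{0}{t}$. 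Dividing the two and multiplying by the normalization, the local time factor cancels on $\{\loct{X}{0}{t} > 0\}$ and yields the constant $\rho$, which explains the factor $\tfrac{4}{\sigma(0)}\sqrt{2/\pi}$ in the definition of $\varrho_n(X)$.

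The first step I would take is to multiply $\varrho_n(X)$ by the indicator $\indic{\loct{X}{0}{t}>0}$; since $\loct{X}{0}{t} = \loct{X}{0}{t}\indic{\loct{X}{0}{t}>0}$, the two above convergences also hold after multiplication by $\indic{\loct{X}{0}{t}>0}$, and on this event the limit in the denominator is strictly positive. By the continuous mapping theorem applied to $(a,b)\mapsto a/b$ (on $\{b>0\}$), this gives
\begin{equation}
\indic{\loct{X}{0}{t}>0}\,\varrho_n(X) \convergence{\Prob_x} \rho \indic{\loct{X}{0}{t}>0}.
\end{equation}

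Next I need to identify the event $\{\loct{X}{0}{t}>0\}$ with $\mc H_t$ up to $\Prob_x$-null sets. One direction is immediate from the support property \eqref{eq_txt_support_loctime}: if $\loct{X}{0}{t}>0$ then $X$ has visited $0$ before time $t$. The reverse direction, namely that the process immediately accumulates local time after hitting $0$, follows from the sticky equation \eqref{eq_stickySDE}: on $\{\tau^{X}_0 < t\}$ the process spends a positive Lebesgue-measure amount of time at $0$ on $[\tau^{X}_0, t]$ (alternatively one can invoke the same reference~\cite[Corollary~29.18]{Kal} used in Proposition~\ref{prop_estimation}, transported via the locally equivalent measure change of~\ref{item_SID_girsanov} and the scale transform $S$ from the proof of Theorem~\ref{thm_main_SID}, under which the image process is a sticky Brownian motion).

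The final step is the Bayes-rule argument, identical to that in Proposition~\ref{prop_estimation}: for every $\varepsilon > 0$,
\begin{equation}
\Prob_x\!\bigbraces{\mc H_t \cap \{|\varrho_n(X) - \rho| > \varepsilon\}} = \Prob_x\!\bigbraces{|\varrho_n(X) - \rho|\,\indic{\loct{X}{0}{t}>0} > \varepsilon} \xrightarrow[n\to\infty]{} 0,
\end{equation}
and since $\Prob_x(\mc H_t) > 0$, dividing yields $\Prob_x^{\mc H_t}(|\varrho_n(X) - \rho| > \varepsilon) \to 0$, which is the desired conditional convergence. The only delicate point is the identification $\{\loct{X}{0}{t}>0\} = \mc H_t$ $\Prob_x$-a.s.; everything else is a direct copy of the sticky Brownian motion case.
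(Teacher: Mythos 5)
Your proposal is correct and follows essentially the same route as the paper: repeat the proof of Proposition~\ref{prop_estimation} with Theorem~\ref{thm_main_SID} in place of Theorem~\ref{thm_main}, then justify $\Prob_x$-a.s. $\mc H_t=\{\loct{X}{0}{t}>0\}$. Note that your first justification of the reverse inclusion (that the solution of \eqref{eq_stickySDE} accumulates occupation time at $0$ immediately after $\tau^{X}_0$) is exactly the nontrivial point and is asserted rather than proved; your parenthetical alternative — transferring the sticky Brownian motion identification through the scale transform $S$ and the (locally) equivalent measure changes from the proof of Theorem~\ref{thm_main_SID} — is precisely the paper's argument and is the one to keep.
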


\begin{proof}
The proof is the same as the proof of of Proposition~\ref{prop_estimation}, using
Theorem~\ref{thm_main_SID} instead of Theorem~\ref{thm_main}. 
We should also justify that $\Prob_x$-\as, $\mathcal H_t = \{ \loct{X}{0}{t} > 0 \} $. 
For the latter it suffices to remark that, as seen in the proof of Proposition~\ref{prop_estimation}, this is  
the case for the sticky Brownian motion, so using the notations of the proof of Theorem~\ref{thm_main_SID}, for $Y$ under $\Qrob_x $.
From~\cite[Exercise~VI.1.23]{RevYor}, this is the case for $X = S^{-1}(Y)$ under $\Qrob_x $.
From the equivalence of probability measures, this is also the case for $X$ under $\Prob_x $.
This completes the proof. 
\end{proof}

\begin{remark}
Corollary~\ref{cor_estimation_SID} holds by replacing $(\cC{1}{n,t}(X),\cC{2}{n,t}(X)) $ 
with $(\cD{1}{n,t}(X),\cD{2}{n,t}(X))$.
\end{remark}

\section{Numerical experiments}
\label{sec_numexp}

In this section we aim to numerically establish the properties of the
stickiness parameter estimator $\varrho_n$ devised in Proposition~\ref{prop_estimation}.
We compare it with the stickiness parameter estimator $\varrho^{(0)}_n $ of
\cite{AnagnostakisM2023} (recalled in~\eqref{eq_first_stickiness_estimator}) in term of 
variance and convergence rate.

We simulate sample paths of the sticky Brownian motion of stickiness parameter $\rho=1 $. 
For this, we use the STMCA approximation scheme with the tuned grid defined in~\cite[Section~2.3]{anagnostakis2023general} of size criteria $h=0.005 $ (the nomenclature {\it tuned grid} and {\it size criteria} is the one used in~\cite{anagnostakis2023general}, it is a grid adapted to the diffusion we aim to approximate).  
This ensures good convergence properties of the scheme~\cite[Corollary~2.5]{anagnostakis2023general}. 
We consider initial value $X_0 = 0 $ and time horizon $T=1 $. 

For each generated path $\wt X^{(i)} $ we consider the Monte Carlo (MC) estimators of
$\rho $ 
\begin{equation}
\begin{aligned}
	\mu_{\MC}&:= \frac{1}{N_{\MC}} \sum_{i=1}^{N_{\MC}} \varrho_{n}(\wt X^{(i)}), 
	&
	\mu^{(0)}_{\MC}&:= \frac{1}{N_{\MC}} \sum_{i=1}^{N_{\MC}} \varrho_{n}^{(0)}(\wt X^{(i)}),
\end{aligned}
\end{equation}
where $N_{\MC} $ is the Monte Carlo simulation size and the estimator $\varrho^{(0)}_n$,
defined in~\eqref{eq_first_stickiness_estimator}, is considered for: test function $g: [x \ra \indicb{0<|x|<5}/10] $ and $\un = n^{\alpha} $ for different values of $\alpha$ close to $0.5$. For Brownian motion $\alpha=0.5$ yields the higher convergence speed~\cite{Jac98}.

\begin{table}[h!]
\centering
\begin{tabular}{c c c c} 
	estimator & $n$ & MC estimation & MC std. estimation\\
	\hline
	$\varrho_n(X) $ & 200000 & 1.030 & 0.0872 \\
	$\varrho^{(0)}_n(X) $ with $\un = n^{0.4}$ & 200000 & 1.062 & 0.3710 \\
	$\varrho^{(0)}_n(X) $ with $\un = n^{0.45}$ & 200000 & 1.039 & 0.2975 \\
	$\varrho^{(0)}_n(X) $ with $\un = n^{0.5}$ & 200000 & 1.019 & 0.3710 \\
	$\varrho^{(0)}_n(X) $ with $\un = n^{0.55}$ & 200000 & 1.014 & 0.1793 \\
	\hline
\end{tabular}
\caption{
	Stickiness parameter estimation for the sticky Brownian motion:
	$\varrho$ vs $\varrho^{(0)} $. 
	Simulation size: $N_{\MC}= 2000 $.
	High values of $\alpha,n $ induce some bias to the approximation due to the grid nature of the STMCA approximation scheme (see discussion in~\cite{anagnostakis2023general}).
}
\label{tab:sample1}
\end{table}

\begin{table}[h!]
\centering
\begin{tabular}{c c c c }
	estimator & $n$ & MC estimation & MC std. estimation\\
	\hline
	$\varrho_n(X) $ & 25000  & 1.018  & 0.149 \\
	$\varrho_n(X) $ & 50000  & 1.012  & 0.133 \\
	$\varrho_n(X) $ & 100000 & 1.012  & 0.121 \\
	$\varrho_n(X) $ & 200000 & 1.011 & 0.077 \\
	$\varrho_n(X) $ & 300000 & 1.012 & 0.079 \\
	\hline
\end{tabular}
\caption{
	Stickiness parameter estimation of the sticky Brownian motion using the estimator $\varrho$:
	variance as function of $n$. 
	Simulation size: $N_{\MC}= 5000 $.
}
\label{tab:sample2}
\end{table}

We also consider the associated Monte Carlo standard deviation (MC std.) estimators:
\begin{align}
\sigma_{\MC} &:= \biggbraces{ \frac{1}{N_{\MC}} \sum_{i=1}^{N_{\MC}} \bigbraces{\varrho_{n} (\wt X^{(i)}) - \mu_{\MC}}^{2}}^{\frac12},
\\
\sigma^{(0)}_{\MC} &:= \biggbraces{ \frac{1}{N_{\MC}} \sum_{i=1}^{N_{\MC}} \bigbraces{\varrho^{(0)}_{n} (\wt X^{(i)}) - \mu^{(0)}_{\MC}}^{2}}^{\frac12}.
\end{align}

Numerical simulations indicate that the choice of the normalizing sequence affects the overall
convergence rate of $\varrho^{(0)}_{n}(X) $.
Comparing to $\varrho^{(0)}_{n}(X) $, the estimator $\varrho_{n}(X) $ has the advantage that it does not depend upon a test function and a normalizing sequence.
Moreover, numerical simulations indicate the following:
\begin{enumerate}
\item The estimator $\varrho $ seems superior to $\varrho^{(0)} $ for any normalizing sequence as it has less variance (see Table~\ref{tab:sample1}, also
compare Table~\ref{tab:sample2} with \cite[Table~2]{Anagnostakis2022}).
\item The Monte Carlo standard deviation decreases as a function of
the sampling frequency $n$ (see Table~\ref{tab:sample2}).
\end{enumerate}

A theoretical analysis of the convergence speed is object of further research.

\appendix 

\section{Proof of Lemma~\ref{lem_transition_asymptotics}}
\label{app_asymptotics}

For reader's convenience, we recall the statement of Lemma~\ref{lem_transition_asymptotics}. Let us also remind that we consider notation~\eqref{def_mn}--\eqref{notaz_m} of Proposition~\ref{prop_SMA_1}.

\begin{lemma0}
Let $X$ be the sticky Brownian motion of stickiness parameter $\rho>0 $, defined on the probability space $(\Omega, \bF, \process{\bF_t}, \Prob_x) $ such that $X_0 = x $, $\Prob_x$-\as.
Let $ (f_n,k_n,g_n,h_n;\,n\in\IN) $ be the functions defined for all $n \in \IN$ and $x\in \IR $ by 
$f_n(x) := \Prob_{x} \left(  X_{1/n} = 0 \right) $, 
$k_n(x) := \indicb{x\not = 0} f_n(x/\sqn) $, 
$g_n(x) := \Prob_{x} \left( x X_{1/n} < 0 \right) $, 
$h_n(x) := \sqn g_n(x/\sqn) $.
It holds that 
\begin{enumerate}
	[label={\upshape (\roman*)}]
	\item  $\lim_{n\ra \infty} f_n(0) = 1 $ and 
	$\lim_{n\ra \infty} \sqn (1- f_n(0)) = 
	{2 \sqrt{2}}/{\rho \sqrt \pi} $,
	\item  the sequence $(k_n)_n $ satisfies~\eqref{eq_connd_gn_aggreg} and $\lim_{n\ra \infty} m_{\sqn \rho}( k_n )  =  2 \sqrt{{2}/{\pi}} $,
	\item  the sequence $(h_n)_n $ satisfies~\eqref{eq_connd_gn_aggreg} and $\lim_{n\ra \infty} m_{\sqn \rho}( h_n )  = 1/{\rho} $.
\end{enumerate}
\end{lemma0}

Let us also recall some useful results.

\begin{lemma}[e.g.,~\cite{BorSal}, p.124]
\label{lemma_sticky_kernel}
The probability transition kernel of the sticky Brownian motion of stickiness parameter $\rho $ with respect to its speed measure $m_{\rho}(\rd y) = \vd y + \rho \delta_0(\rd y)$ is the function defined for all $t>0 $, 
$x,y\in \IR $ by
\begin{equation}
	p_{\rho}(t,x,y) = \frac{e^{-(x-y)^2/2t} - e^{-(|x|+|y|)^2/2t}}{\sqrt{2 \pi t}} 
	+ \frac{1}{\rho} e^{2(|x|+|y|)/\rho + 2t/\rho^2} \erfc \Bigbraces{\frac{|x|+|y|}{\sqrt{2t}} + \frac{\sqrt{2t}}{\rho}}.
\end{equation}
\end{lemma}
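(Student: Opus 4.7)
The formula is the classical Borodin-Salminen expression for the transition density of sticky Brownian motion; I outline a systematic derivation via the Laplace transform (resolvent). Sticky Brownian motion of parameter $\rho$ is a regular one-dimensional diffusion on natural scale $s(x)=x$ with speed measure $m_\rho$, and its generator $\mathcal{L}_\rho$ agrees with $\tfrac12\partial_x^2$ on $\IR\setminus\{0\}$; integrating the eigenvalue equation $\lambda u = \mathcal{L}_\rho u$ against $m_\rho$ across 0 shows that any $u$ in the domain must satisfy the continuity condition $u(0^+)=u(0^-)$ together with the lateral condition
\begin{equation}
u'(0^+) - u'(0^-) \;=\; 2\rho\lambda\, u(0).
\end{equation}
By the Itô-McKean theory of one-dimensional diffusions, the transition density $p_\rho$ with respect to $m_\rho$ exists, is continuous and symmetric in $(x,y)$, and is uniquely determined by the data $(s,m_\rho)$.

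The plan is first to compute the $\lambda$-resolvent $G_\lambda(x,y) := \int_0^\infty e^{-\lambda t} p_\rho(t,x,y)\,\vd t$ for $\lambda>0$. It admits the Wronskian representation $G_\lambda(x,y) = w_\lambda^{-1}\phi_\lambda(x\vee y)\psi_\lambda(x\wedge y)$, where $\psi_\lambda$ (resp.\ $\phi_\lambda$) is the essentially unique positive increasing (resp.\ decreasing) solution to $\lambda u = \mathcal{L}_\rho u$ and $w_\lambda$ the associated Wronskian. On each half-line the admissible solutions are combinations of $e^{\pm\sqrt{2\lambda}x}$; boundedness at $\pm\infty$ together with the lateral condition at 0 fixes all coefficients, yielding after a short algebra the decomposition
\begin{equation}
G_\lambda(x,y) \;=\; \frac{e^{-\sqrt{2\lambda}|x-y|} - e^{-\sqrt{2\lambda}(|x|+|y|)}}{\sqrt{2\lambda}} \;+\; \frac{e^{-\sqrt{2\lambda}(|x|+|y|)}}{\sqrt{2\lambda}\,\bigbraces{1+\rho\sqrt{\lambda/2}}},
\end{equation}
in which the first summand is the resolvent of Brownian motion killed at 0 and the second is the sticky correction.

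Next, I invert the Laplace transform summand by summand. The first summand inverts, by the classical reflection-principle identity for Brownian motion killed at 0, to $(2\pi t)^{-1/2}\bigbraces{e^{-(x-y)^2/(2t)} - e^{-(|x|+|y|)^2/(2t)}}$, reproducing the first term of $p_\rho$. For the second summand, I invoke the classical Laplace-transform identity
\begin{equation}
\frac{e^{-a\sqrt{2\lambda}}}{\sqrt{2\lambda}\,\bigbraces{1+\rho\sqrt{\lambda/2}}} \;=\; \int_0^\infty e^{-\lambda t}\,\frac{1}{\rho}\,e^{2a/\rho + 2t/\rho^2}\,\erfc\Bigbraces{\frac{a}{\sqrt{2t}}+\frac{\sqrt{2t}}{\rho}}\,\vd t, \qquad a\ge 0,
\end{equation}
a standard entry of Laplace-transform tables (see, e.g., \cite{BorSal}, Appendix~2), applied with $a=|x|+|y|$; this yields exactly the second term of the statement.

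The main obstacle is the bookkeeping of the first step, i.e.\ obtaining the correct Wronskian and the clean form of $G_\lambda$ with the stated $\rho$-dependence, and the verification of the Laplace pair above (provable by differentiating under the integral sign, or by direct contour evaluation after reducing to a $\sqrt\lambda$-substitution). As a final sanity check, the candidate $p_\rho$ can be shown to satisfy the forward equation $\partial_t p_\rho = \mathcal{L}_\rho p_\rho$ on $\IR\setminus\{0\}$ together with the sticky lateral condition and the initial datum $\lim_{t\downarrow 0}\int f(y)\,p_\rho(t,x,y)\,m_\rho(\vd y) = f(x)$ for bounded continuous $f$; uniqueness for the Feller Cauchy problem associated with $(s,m_\rho)$ then identifies it as the transition kernel.
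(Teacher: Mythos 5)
Your derivation is correct, but note that the paper does not prove this lemma at all: it is quoted verbatim from Borodin--Salminen (p.~124), so there is no internal proof to compare against. Your route --- compute the resolvent $G_\lambda$ from the increasing/decreasing $\lambda$-eigenfunctions and the sticky lateral condition $u'(0^+)-u'(0^-)=2\rho\lambda u(0)$, then invert the Laplace transform term by term --- is the standard way such handbook entries are obtained, and I checked the two key ingredients: your displayed $G_\lambda$ is indeed the resolvent density with respect to $m_\rho$ (e.g.\ $G_\lambda(0,0)=(\sqrt{2\lambda}+\rho\lambda)^{-1}$, and the off-diagonal cases $0<x\le y$ and $x<0<y$ come out right), and your Laplace pair is the classical entry $\int_0^\infty e^{-\lambda t}e^{\beta^2 t+a'\beta}\erfc\bigbraces{\tfrac{a'}{2\sqrt t}+\beta\sqrt t}\vd t=e^{-a'\sqrt\lambda}/\bigbraces{\sqrt\lambda(\sqrt\lambda+\beta)}$ with $a'=(|x|+|y|)\sqrt2$ and $\beta=\sqrt2/\rho$, which matches the second summand exactly. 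The one place where your ``bookkeeping'' warning is genuinely load-bearing is the normalization of the Green-function formula: with the paper's convention $m_\rho(\rd y)=\vd y+\rho\,\delta_0(\rd y)$ (rather than $2\vd y+2\rho\,\delta_0$) and generator $\tfrac12\partial_x^2$ off the origin, the naive $\psi(x\wedge y)\phi(x\vee y)/w_\lambda$ with the scale Wronskian $w_\lambda=2\sqrt{2\lambda}\,(1+\rho\sqrt{\lambda/2})$ gives half of the correct density with respect to $m_\rho$; your final formula has absorbed this factor of $2$ correctly, but a written-out proof should make that step explicit, since it is exactly where the $\rho$-dependence of the prefactor $1/(1+\rho\sqrt{\lambda/2})$ could otherwise go wrong.
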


\begin{lemma}
[e.g.,~\cite{Anagnostakis2022}, Corollary~2.7]
\label{lemma_time_scaling}
The probability transition kernel of the sticky Brownian motion with respect to its speed measure 
$m_{\rho}(\rd x) = \vd x + \rho \delta_0(\rd x)$ satisfies for
all $c,\rho,t>0 $ and $x,y\in \IR $
\begin{equation}
	p_{\rho}(ct,x,y)m_{\rho}(\rd y)
	= p_{\rho/\sqrt{c}}(t,x/\sqrt{c},y)m_{\rho/\sqrt{c}}(\rd y).
\end{equation}
\end{lemma}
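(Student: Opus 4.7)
The strategy is direct verification from the explicit kernel formula in Lemma~\ref{lemma_sticky_kernel}. I would first establish the pointwise identity
\begin{equation}
\sqrt{c}\, p_\rho(ct,x,y) = p_{\rho/\sqrt{c}}\bigbraces{t,\,x/\sqrt{c},\,y/\sqrt{c}}
\end{equation}
for all $x,y\in\IR$ and $t,c,\rho>0$, which is the sticky analogue of the usual Brownian scaling. The claimed measure identity then follows by accounting correctly for the Jacobian on the Lebesgue part and the rescaling of the atom in the speed measure.

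The first step is book-keeping on the Gaussian difference in $p_\rho$: use $(x-y)^2/(2ct)=(x/\sqrt{c}-y/\sqrt{c})^2/(2t)$ and $(|x|+|y|)^2/(2ct)=(|x/\sqrt{c}|+|y/\sqrt{c}|)^2/(2t)$, together with $\sqrt{2\pi ct}=\sqrt{c}\,\sqrt{2\pi t}$, so that multiplying by $\sqrt{c}$ reproduces the Gaussian part evaluated at $(t,x/\sqrt{c},y/\sqrt{c})$. The second step is to track three invariants in the $\erfc$ term under $(t,\rho,x,y)\mapsto (t,\rho/\sqrt{c},x/\sqrt{c},y/\sqrt{c})$:
\begin{enumerate}[label={\upshape(\roman*)}]
\item the exponent transforms as $2(|x/\sqrt{c}|+|y/\sqrt{c}|)/(\rho/\sqrt{c})+2t/(\rho/\sqrt{c})^2=2(|x|+|y|)/\rho+2ct/\rho^2$;
\item the argument $(|x/\sqrt{c}|+|y/\sqrt{c}|)/\sqrt{2t}+\sqrt{2t}/(\rho/\sqrt{c})$ equals $(|x|+|y|)/\sqrt{2ct}+\sqrt{2ct}/\rho$;
\item the prefactor $1/(\rho/\sqrt{c})=\sqrt{c}/\rho$ absorbs exactly the $\sqrt{c}$ from the left.
\end{enumerate}
Adding the two pieces yields the pointwise identity.

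From this, the measure identity is a short calculation. On the set $\{y\neq 0\}$, both $m_\rho$ and $m_{\rho/\sqrt{c}}$ reduce to Lebesgue measure, and the change of variable $y\mapsto y/\sqrt{c}$ has Jacobian $1/\sqrt{c}$, which cancels the $\sqrt{c}$ in the pointwise identity. At the atom $y=0$ the pointwise identity gives $\sqrt{c}\,p_\rho(ct,x,0)=p_{\rho/\sqrt{c}}(t,x/\sqrt{c},0)$; multiplying by the respective Dirac weights $\rho$ and $\rho/\sqrt{c}$ yields equal masses, namely $p_\rho(ct,x,0)\,\rho$ on both sides.

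The main (mild) obstacle is bookkeeping: the $\erfc$-exponential combination contains three independent appearances of $\rho$ and two of $t$, so one must verify carefully that every $\sqrt{c}$ finds its partner. As an alternative one can give a purely probabilistic proof: using characterization~\ref{item_sticky_P1}, write $X_t=Z_{\gamma(t)}$ and observe that $\tilde Z_s:=Z_{cs}/\sqrt{c}$ is a Brownian motion whose local time at $0$ scales as $L^0_s(\tilde Z)=L^0_{cs}(Z)/\sqrt{c}$; then the time change $\tilde A(s):=s+(\rho/\sqrt{c})L^0_s(\tilde Z)$ satisfies $\tilde A(s)=A(cs)/c$, hence $X_{ct}/\sqrt{c}=\tilde Z_{\tilde\gamma(t)}$, which by~\ref{item_sticky_P1_rev} is a sticky Brownian motion of parameter $\rho/\sqrt{c}$ started at $x/\sqrt{c}$, and the kernel identity is an immediate consequence.
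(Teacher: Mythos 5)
Your verification is correct, but note that the paper itself offers no proof of this lemma: it is imported verbatim from~\cite{Anagnostakis2022}, Corollary~2.7, so there is no in-paper argument to compare against. Your pointwise identity $\sqrt{c}\,p_\rho(ct,x,y)=p_{\rho/\sqrt{c}}(t,x/\sqrt{c},y/\sqrt{c})$ checks out term by term against Lemma~\ref{lemma_sticky_kernel} (Gaussian part, $\erfc$-exponential prefactor, exponent and argument all scale as you claim), and your passage to the measure statement is the right one. One point deserves emphasis: read literally, with the same $y$ on both sides, the displayed identity fails on the absolutely continuous part, since $p_{\rho/\sqrt{c}}(t,x/\sqrt{c},y)=\sqrt{c}\,p_\rho(ct,x,\sqrt{c}\,y)\neq p_\rho(ct,x,y)$ in general; it only holds exactly at the atom, where the weights $\rho$ and $\rho/\sqrt{c}$ compensate the factor $\sqrt{c}$ as you compute. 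The statement must therefore be understood as the pushforward (scaling) identity, i.e.\ equality after the change of variable $y\mapsto y/\sqrt{c}$ on the Lebesgue part — which is precisely how it is invoked in~\eqref{eq_def_fghk2}, where the atom is evaluated directly and the integrals against $\indic{xy<0}$ are invariant under that change of variable. Your closing probabilistic sketch ($\tilde Z_s=Z_{cs}/\sqrt{c}$, $L^0_s(\tilde Z)=L^0_{cs}(Z)/\sqrt{c}$, $\tilde A(s)=A(cs)/c$, hence $X_{ct}/\sqrt{c}$ is sticky Brownian motion of parameter $\rho/\sqrt{c}$ via~\ref{item_sticky_P1_rev}) is also correct and is essentially the route taken in the cited reference; it has the advantage of making the pushforward interpretation automatic, whereas the direct computation buys an explicit check independent of the time-change machinery.
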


We are now ready to provide the proof of Lemma~\ref{lem_transition_asymptotics}. 
From Lemma~\ref{lemma_time_scaling}, for all $n\in \IN $ and $x\in \IR $, we have that: 
\begin{equation}
\label{eq_def_fghk2}
\begin{aligned}
	f_n(x) &= \left(\sqn \rho\right)  p_{\sqn \rho }(1,\sqn x,0),
	& g_n(x) &=  \int_{\IR} \indicb{xy < 0} p_{\sqn \rho }(1,\sqn x,y) \vd y,\\	
	k_n(x) &= \indicb{x\not = 0} \left(\sqn \rho\right)  p_{\sqn \rho }(1, x,0), 
	& 	h_n(x) &= \sqn  \int_{\IR} \indicb{xy<0} p_{\sqn \rho }(1, x,y) \vd y.
\end{aligned}
\end{equation}

\begin{proof}
[Proof of Item~\ref{lem_fn0_convergence}]
Regarding the first relation, from~\eqref{eq_def_fghk2} and
Lemma~\ref{lemma_sticky_kernel},
\begin{equation}
	f_n(0)
	= \left(\sqn \rho\right)  p_{\sqn \rho }(1,0,0)
	= e^{ 2/n \rho^2} \erfc \Bigbraces{\frac{\sqrt{2}}{\sqn \rho}}.
\end{equation}
Taking the limit as $n\lra \infty $ yields the result. Indeed, it holds that
\begin{equation}
	\lim_{n\ra \infty} f_n(0) = \lim_{y \ra 0} e^{y^{2}} \erfc(y) = 1.
\end{equation}
Regarding the second relation,
from the l'Hôspital's rule,
\begin{equation}
	\lim_{y\ra 0} \frac{1}{y}\left( 1 - e^{y^{2}} \erfc(y) \right)
	= - \lim_{y\ra 0} e^{y^{2}} \erfc'(y) = \frac{2}{\sqrt \pi}.
\end{equation} 
Taking the limit as $n\lra \infty $ yields
\begin{equation}
	\lim_{n\ra \infty} \sqn (1 - f_n(0))
	= \lim_{n\ra \infty} \frac{\sqrt{2}}{ \rho} \left( \frac{\sqrt{2}}{\sqrt{n} \rho} \right)^{-1} (1 - f_n(0)) = \frac{\sqrt{2}}{ \rho} \frac{2}{\sqrt{\pi}}
	= \frac{2}{ \rho}
	\sqrt{\frac{2}{\pi}},
\end{equation}
which is the desired result.		
This completes the proof. 
\end{proof}

\begin{proof}
[Proof of Item~\ref{lem_kn_convergence}]
Since $k_n(0)=0 $, from~\eqref{eq_def_fghk2} and
Lemma~\ref{lemma_sticky_kernel}, we have that
\begin{equation}
	\begin{aligned}
		m_{\sqn \rho}( k_n ) 
		=& \int_{\IR} k_n(x) \vd x
		= \left(\sqn \rho\right) \int_{\IR}  p_{\sqn \rho }(1, x,0) \vd x
		\\ =& \int_{\IR} e^{2|x|/\sqn \rho + 2/n \rho^2} \erfc \biggbraces{\frac{ |x|}{\sqrt{2}} + \frac{\sqrt{2}}{\sqn \rho}} \vd x.
	\end{aligned}
\end{equation}
Regarding the function we integrate, from the Mills' ratio on the Gaussian random variable (see~\cite[p.98]{GriSti}), for some constant $K_{\Mills}>0 $,
we have that
\begin{align}
		\lim_{n\ra \infty} e^{2|x|/\sqn \rho + 2/n \rho^2} \erfc \biggbraces{\frac{ |x|}{\sqrt{2}} + \frac{\sqrt{2}}{\sqn \rho}}
		&=  \erfc \biggbraces{\frac{ |x|}{\sqrt{2}}} ,\\
		\indicb{|x|\ge 1} e^{2|x|/\sqn \rho + 2/n \rho^2} \erfc \biggbraces{\frac{ |x|}{\sqrt{2}} + \frac{\sqrt{2}}{\sqn \rho}} 
		& \le \indicb{|x|\ge 1} K_{\Mills} \frac{\sqrt{2n}\rho}{\sqn \rho|x|+ 2}	\erfc \biggbraces{- \frac{x^{2}}{2}}
		\\ &\le \sqrt{2} K_{\Mills} \erfc \biggbraces{- \frac{x^{2}}{2}}
\end{align}
and that
\begin{equation}
	\indicb{|x|< 1} e^{2|x|/\sqn \rho + 2/n \rho^2} \erfc \biggbraces{\frac{ |x|}{\sqrt{2}} + \frac{\sqrt{2}}{\sqn \rho}}  \le \indicb{|x|<1} e^{2/\rho + 2/\rho^{2}} \erfc \biggbraces{\frac{ |x|}{\sqrt{2}}},
\end{equation}
where both upper bounds are $L^{1}(\IR)$.
Thus, from the dominated convergence theorem,
\begin{equation}
	\lim_{n\ra \infty} m_{\sqn \rho}( k_n )
	= \lim_{n\ra \infty} 
	\int_{\IR} \erfc \biggbraces{\frac{ |x|}{\sqrt{2}}} \vd x
	= 2 \sqrt{\frac{2}{\pi}}.
\end{equation}
This proves the second assertion.

Regarding the first assertion, we observe that
\begin{align}
		&|x| e^{2|x|/\sqn \rho + 2/n \rho^2} 
		\erfc \biggbraces{\frac{ |x|}{\sqrt{2}} + \frac{\sqrt{2}}{\sqn \rho}} 
		\\ &\qquad \le \indicb{|x|\ge 1} K_{\Mills} \frac{\sqrt{2n}\rho |x|}{\sqn \rho|x|+ 2}	
		\erfc \biggbraces{- \frac{x^{2}}{2}}
		+ \indicb{|x|<1} e^{2/\rho + 2/\rho^{2}} \exp \biggbraces{\frac{ |x|}{\sqrt{2}}}
		\\ &\qquad \le \indicb{|x|\ge 1} K_{\Mills} \sqrt{2}	\exp \biggbraces{- \frac{x^{2}}{2}}
		+ \indicb{|x|<1} e^{2/\rho + 2/\rho^{2}} \erfc \biggbraces{\frac{ |x|}{\sqrt{2}}},
\end{align} 
where the upper bound defined a function in $L^{1}(\IR) $.
Hence, from the dominated convergence theorem,
\begin{equation}
	\begin{aligned}
		\lim_{n\ra \infty} {\int_{-\infty}^{+\infty} |x| k_n(x) \vd x}
		&= \int_{\IR} \lim_{n\ra \infty} \biggbraces{|x|e^{2|x|/\sqn \rho + 2/n \rho^2} \erfc \biggbraces{\frac{ |x|}{\sqrt{2}} + \frac{\sqrt{2}}{\sqn \rho}} } \vd x
		\\ &= \int_{\IR} |x| \erfc \biggbraces{\frac{ |x|}{\sqrt{2}}} \vd x = 1.
	\end{aligned}
\end{equation}
With similar arguments, we can prove that
$ m(k^{2}_n) $ converges to some constant $K >0 $.
Also,
\begin{equation}
	\lim_{n\ra \infty} k^{2}_n(\sqn x)
	= \lim_{n\ra \infty} \biggbraces{e^{2|x|/ \rho + 2/n \rho^2} \erfc \biggbraces{\frac{\sqn |x|}{\sqrt{2}} + \frac{\sqrt{2}}{\sqn \rho}}}^{2} = 0.
\end{equation}
From all the above, $(k_n)_n $ satisfies~\eqref{eq_connd_gn_aggreg}.
This completes the proof.
\end{proof}

\begin{proof}
[Proof of Item~\ref{lem_hn_convergence}]
The first assertion is proved with similar arguments as in 
the proof of Item~\ref{lem_kn_convergence}.
Regarding the second assertion, from~\eqref{eq_def_fghk2} and
Lemma~\ref{lemma_sticky_kernel}, we have that
\begin{align}
		&\sqn \indicb{xy<0} p_{\sqn \rho}(1,x,y)
		\\ & \quad =
		\frac{1}{\rho}
		\indicb{xy<0} \Bigbraces{\indicb{|x|+|y|\ge 1}+\indicb{|x|+|y|< 1}} e^{2(|x|+|y|)/\sqn \rho + 2/n \rho^2} \erfc \biggbraces{\frac{ |x|+|y|}{\sqrt{2}} + \frac{\sqrt{2}}{\sqn \rho}}
		\\ & \quad \le 
		2 \biggbraces{\indicb{|x|+|y|\ge 1} K_{\Mills} \frac{\sqrt{2n}\rho (|x|+|y|)}{\sqn \rho(|x|+|y|)+ 2}	e^{- \frac{(|x|+|y|)^{2}}{2}}
			\\ & \qquad\; + \indicb{|x|+|y|<1} e^{2/\rho + 2/\rho^{2}} \erfc \biggbraces{\frac{ |x|+|y|}{\sqrt{2}}}}
		\\ & \quad \le 
		2 \biggbraces{ \indicb{|x|+|y|\ge 1} K_{\Mills} \sqrt{2}	e^{- \frac{(|x|+|y|)^{2}}{2}}
			+ \indicb{|x|+|y|<1} e^{2/\rho + 2/\rho^{2}} \erfc \biggbraces{\frac{ |x|+|y|}{\sqrt{2}}}},
\end{align}
where the upper bound defined a function in $L^{1}(\IR) $.
Hence, from the dominated convergence theorem, 
\begin{align}
		m_{\sqn \rho}( h_n ) =& \sqn \int_{\IR} \int_{\IR} \indicb{xy<0} p_{\sqn \rho}(1,x,y) \vd y \vd x
		\\ =& \frac{1}{\rho} \int_{\IR} \int_{\IR} \indicb{xy<0} e^{2(|x|+|y|)/\sqn \rho + 2/n \rho^2} \erfc \biggbraces{\frac{ |x|+|y|}{\sqrt{2}} + \frac{\sqrt{2}}{\sqn \rho}} \vd y \vd x
		\\ =& \frac{2}{\rho}
		\int_{0}^{\infty} \int_{0}^{\infty} e^{2(x+y)/\sqn \rho + 2/n \rho^2} \erfc \biggbraces{\frac{ x+y}{\sqrt{2}} + \frac{\sqrt{2}}{\sqn \rho}} \vd y \vd x
		\\ &\convergence{} \frac{2}{\rho} \int_{0}^{\infty} \int_{0}^{\infty} \erfc \biggbraces{\frac{ x+y}{\sqrt{2}}} \vd y \vd x
		= \frac{1}{\rho}.
\end{align}
This completes the proof.
\end{proof}

\section{Proof of Lemma~\ref{lem_sticky_refl_principle}}
\label{app_reflection_sticky}

For reader's convenience, we recall the statement of Lemma~\ref{lem_sticky_refl_principle} using notation~\eqref{eq_txt_hitting_times} for the first hitting time.

\begin{lemma0}[Reflection principle for sticky Brownian motion]
Let $X$ be the sticky Brownian motion of stickiness parameter $\rho>0$ defined on the filtered probability space $\mathcal P_x = (\Omega, \bF, \process{\bF_t},\Prob_x) $ such that $\Prob_x$-\as, $X_0 = x$ (in particular, $X$ is $\process{\mathcal F_t}$-adapted). 
We consider the process $X' $, defined for all $t\ge 0 $ by
\begin{equation}
	\label{eq_lem_sticky_refl_at_0}
	X'_t = 
	\begin{dcases}
		X_t & t< \tau^{X}_{0},\\
		-X_t & t\ge \tau^{X}_{0}.\\
	\end{dcases}
\end{equation}
It holds that $X' $ is also a sticky Brownian motion of stickiness parameter $\rho$.
\end{lemma0}

\begin{proof}
Let $Z$ be the Brownian motion defined in characterization~\ref{item_sticky_P1}, on an extension of the probability space, so that 
\begin{equation}
	\begin{aligned}
		\forall t &\ge 0: 
		&X_t &= Z_{\gamma(t)}		
	\end{aligned}
\end{equation}
with $\gamma$ the time--change defined in~\ref{item_sticky_P1}.
We observe that $\tau^{X}_{0}=\gamma(\tau^{Z}_{0}) $.
Let $Z' $ be the process defined for all $t\ge 0 $ by
\begin{equation}
	Z'_t = 
	\begin{dcases}
		Z_t & t< \tau^{Z}_{0},\\
		-Z_t & t\ge \tau^{Z}_{0}.\\
	\end{dcases}
\end{equation}
From the reflection principle of the Brownian motion $Z$ (see~\cite[Exercise~III.3.14]{RevYor}), we have that $Z'$ is also a Brownian motion. Moreover, $\loct{Z}{0}{}=\loct{Z'}{0}{}$, $\tau^{Z}_0=\tau^{Z'}_0$. Therefore, $A(t)=t+\rho \loct{Z'}{0}{t}$ and $\tau^{X}_0=\gamma(\tau^{Z}_0)=\gamma(\tau^{Z'}_0)$ which, by the definition of $X'$, is equal also to $\tau^{X'}_0$.
From all the above, we show that $X' = \process{Z'_{\gamma(t)}} $. Indeed, on the event $\{t\geq \tau^{X}_0\}$ it holds that 
\begin{equation}
	X_t'= -X_t = - Z_{\gamma(t)} = (-Z)_{\gamma(t)} = Z'_{\gamma(t)},
\end{equation}
and, since $X_t=Z_{\gamma(t)}$, 
\begin{equation}
	X'_t= Z_{\gamma(t)} \indicb{t < \tau^{X}_0} + Z'_{\gamma(t)} \indicb{t\geq \tau^{X}_0} = Z'_{\gamma(t)}.
\end{equation}
To finish the proof, it suffices to observe that \ref{item_sticky_P1_rev} ensures that $X'$ is a sticky Brownian motion of stickiness parameter $\rho$.
\end{proof}

\end{document}